\documentclass[11pt, a4paper]{article}
\usepackage{HongLiu}
\usepackage{xcolor}

\hypersetup{
  colorlinks=true,
  linkcolor=blue,
  citecolor=blue,
  urlcolor=blue
}

\tikzstyle{aNode} = [circle, fill = black]
\tikzstyle{bNode} = [circle,draw = black, thick]

\theoremstyle{plain}
\newtheorem{corollary}[theorem]{Corollary}
\newtheorem{proposition}[theorem]{Proposition}
\newtheorem{property}[theorem]{Property}

\theoremstyle{remark}
\newtheorem{remark}[theorem]{Remark}

\newcommand\pc{{\pi_{\operatorname{col}}}}
\newcommand\pcd{{\pi_{\operatorname{col}}^\delta}}

\newcommand{\dens}{\rho}
\newcommand{\opp}{\mathrm{op}}
\newcommand{\Fcal}{\mathcal F}
\newcommand{\Dcal}{\mathcal D}
\newcommand{\Hcal}{\mathcal H}
\newcommand{\Picol}{\Pi_{\mathrm{col}}^{\mathrm{fin}}}
\newcommand{\Ldig}{\Lambda_{\mathrm{dig}}}

\DeclareTextCompositeCommand{\v}{OT1}{l}{l\nobreak\hspace{-.1em}'}
\DeclareTextCompositeCommand{\v}{OT1}{t}{t\nobreak\hspace{-.1em}'\nobreak\hspace{-.15em}}

\title{Colored Tur\'an densities and finite directed templates}
\author{
Nannan ~Chen\thanks{School of Mathematics, Shandong University, Jinan, China. Email: \texttt{chennannan@mail.sdu.edu.cn}.}
\and
Ander ~Lamaison\thanks{Universidad P\'ublica de Navarra, Pamplona, Spain. Email: \texttt{ander.lamaison@unavarra.es}} 
\and
Heng~Li\thanks{School of Mathematics, Shandong University, Jinan, China, and Extremal Combinatorics and Probability Group (ECOPRO), Institute for Basic Science (IBS), Daejeon, South
Korea. Supported by the National Natural Science Foundation of China (12501487), by China Scholarship
Council and the Institute for Basic Science (IBS-R029-C4). Email: \texttt{heng.li@sdu.edu.cn}.}
\and
Hong~Liu\thanks{Extremal Combinatorics and Probability Group (ECOPRO), Institute for Basic Science (IBS), Daejeon, South
Korea. Supported by the Institute for Basic Science (IBS-R029-C4). Email: \texttt{hongliu@ibs.re.kr}.}}
\date{}

\begin{document}
\maketitle

\begin{abstract}
Classical Tur\'an theory relates the asymptotic extremal problem for an ordinary forbidden graph to its chromatic number.  This principle fails when a forbidden copy must respect prescribed vertex colors, as occurs naturally in multipartite extremal problems and in embeddings with prescribed clusters.  We study properly vertex-colored graphs whose hosts have a prescribed density between every pair of color classes, with containment interpreted through color-preserving homomorphisms.  Our main theorem gives a finite directed reduction: for every fixed $m$ and $\varepsilon>0$, every sufficiently many-colored $d$-dense host admits a colored homomorphism from an $m$-layer half-blowup $D[m]$ of a finite digraph $D$ with density at least $d-\varepsilon$, where $D$ may be chosen with $\lceil\varepsilon^{-1}\rceil$ vertices.  Consequently, the colored Tur\'an density of every non-empty finite forbidden family equals the homomorphism Tur\'an density of its associated acyclic orientations.  We then determine the complete finite value set: these densities are exactly the Lagrangians of finite loopless digraphs, and hence are rational.  The main technical input is a low-resolution sampling method that preserves all pairwise bipartite degree profiles on a large subfamily of color classes while keeping the sample size independent of the number of retained classes.
\end{abstract}

\section{Introduction}

%How much edge density forces a graph with a prescribed vertex-color pattern?  Without the color requirement, 
How much edge density forces a graph to contain a prescribed subgraph? Without any additional requirement, the answer is governed by a single familiar invariant.  For a graph $F$ with at least one edge, let $\operatorname{ex}(n,F)$ be the maximum number of edges in an $n$-vertex $F$-free graph.  Tur\'an's theorem~\cite{Tur} and the Erd\H{o}s--Stone--Simonovits theorem~\cite{ErdosSimonovits66,ErdS46} give
\[
  \pi(F):=\lim_{n\to\infty}\frac{\operatorname{ex}(n,F)}{\binom n2}
  =1-\frac{1}{\chi(F)-1}.
\]
Thus the asymptotic extremal density depends only on the chromatic number: in particular, it vanishes for every bipartite graph.

%Prescribed vertex classes lead to a different question.  
In graphs with a given partition, vertex class requirements lead to a different question. A forbidden configuration may require several vertices to lie in the same class and others to lie in distinct classes.  Such requirements arise in multipartite extremal problems and when an embedding must respect a given partition, as in regularity-based arguments.  The relevant density condition is then imposed between every pair of classes, rather than on the graph as a whole.  We work in the multipartite setting introduced by Bollob\'as~\cite{Bol78}, expressed in terms of proper vertex colorings.

A \emph{$k$-colored graph} is a graph with a proper $k$-coloring in which every color class is non-empty.  It is \emph{$d$-dense} if the bipartite graph between every two distinct color classes has edge density at least $d$.
A \emph{colored homomorphism} from a colored graph $F$ to a colored graph $G$ is a graph homomorphism $\varphi:V(F)\to V(G)$ such that $\varphi(u)$ and $\varphi(v)$ have the same color if and only if $u$ and $v$ have the same color.  An injective colored homomorphism identifies $F$ with a \emph{colored subgraph} of $G$.

Thus color names may be relabeled, but distinct source colors must use distinct host classes.  Vertices of the same color may be identified.  This homomorphic formulation allows arbitrary non-empty host classes; the injective formulation has the same density threshold when all host classes are sufficiently large, by the usual supersaturation argument with an arbitrarily small density margin.

Let $\Fcal$ be a non-empty finite family of colored graphs, and let $k$ be at least the maximum number of colors in a member of $\Fcal$.  Define $\pc(\Fcal,k)$ to be the infimum of the values of $d$ such that every $d$-dense $k$-colored graph admits a colored homomorphism from some $F\in\Fcal$.  The \emph{colored Tur\'an density} of $\Fcal$ is
\[
\pc(\Fcal)=\lim_{k\to\infty}\pc(\Fcal,k).
\]
The limit exists because $\pc(\Fcal,k)$ is non-increasing in $k$.  For a singleton family, we write $\pc(F)$.

Equivalently, $\pc(\Fcal,k)$ is the infimum of the values of $d$ for which some $n$ guarantees a colored copy of a member of $\Fcal$ in every $d$-dense $k$-colored graph whose classes all have size at least $n$.  Throughout, forbidden families are non-empty whenever a density parameter is discussed.  A family containing an edgeless colored graph has density zero.

When $F$ is rainbow, meaning that all its vertices have distinct colors, this is the multipartite Tur\'an problem studied by Bondy, Shen, Thomass\'e and Thomassen~\cite{BonSTT06}.  They proved, among other results, that $\pc(K_3,3)=\frac{\sqrt5-1}{2}$ and that $\pc(F)=\pi(F)$ for every rainbow $F$.  Pfender~\cite{Pfe12} showed that the limiting value for cliques is attained for all sufficiently large $k$, and Narins and Tran~\cite{NarT17} characterized the graphs $F$ achieving $\pc(F,k)=\pi(F)$ for all large enough $k$.  The equality $\pc(F)=\pi(F)$ also holds when the prescribed coloring uses exactly $\chi(F)$ colors.  Indeed, sampling one vertex from each host class gives a transversal of the required average density, and the ordinary Tur\'an theorem forces a rainbow $K_{\chi(F)}$ onto which $F$ maps.  The matching lower bound comes from an ordinary $(\chi(F)-1)$-partite construction.

General color patterns behave differently, even on paths.  The \emph{$k$-half-graph} has vertices $a_1,\dots,a_k,b_1,\dots,b_k$, with $a_i,b_i$ of color $i$, and edges $a_i b_j$ for $i<j$.  It is $\frac14$-dense.  The path $P_5$ with color sequence $1-2-3-1-2$ admits no colored homomorphism into any half-graph, and hence has positive colored Tur\'an density despite being bipartite.  We will show later that its density is exactly $\frac14$.  The obstruction comes from the order of the classes: $a_i b_j$ is an edge in one direction of the order, but not in the other.  Our first main result shows that this directed structure governs the general problem.

\begin{figure}[ht]
\centering
\begin{tikzpicture}[x=0.75pt,y=0.75pt,yscale=-1.2,xscale=1.2]
%uncomment if require: \path (0,235); %set diagram left start at 0, and has height of 235

%Rounded Rect [id:dp18931150533789765] 
\draw   (140,57.91) .. controls (140,54.65) and (142.65,52) .. (145.91,52) -- (163.64,52) .. controls (166.91,52) and (169.56,54.65) .. (169.56,57.91) -- (169.56,115.2) .. controls (169.56,118.46) and (166.91,121.11) .. (163.64,121.11) -- (145.91,121.11) .. controls (142.65,121.11) and (140,118.46) .. (140,115.2) -- cycle ;
%Shape: Circle [id:dp9116556237641933] 
\draw  [fill={rgb, 255:red, 0; green, 0; blue, 0 }  ,fill opacity=1 ] (153.33,66.5) .. controls (153.33,65.67) and (154,65) .. (154.83,65) .. controls (155.66,65) and (156.33,65.67) .. (156.33,66.5) .. controls (156.33,67.33) and (155.66,68) .. (154.83,68) .. controls (154,68) and (153.33,67.33) .. (153.33,66.5) -- cycle ;
%Shape: Circle [id:dp9177858533103352] 
\draw  [fill={rgb, 255:red, 0; green, 0; blue, 0 }  ,fill opacity=1 ] (153.33,105.83) .. controls (153.33,105) and (154,104.33) .. (154.83,104.33) .. controls (155.66,104.33) and (156.33,105) .. (156.33,105.83) .. controls (156.33,106.66) and (155.66,107.33) .. (154.83,107.33) .. controls (154,107.33) and (153.33,106.66) .. (153.33,105.83) -- cycle ;
%Rounded Rect [id:dp06115191558208122] 
\draw   (200,57.91) .. controls (200,54.65) and (202.65,52) .. (205.91,52) -- (223.64,52) .. controls (226.91,52) and (229.56,54.65) .. (229.56,57.91) -- (229.56,115.2) .. controls (229.56,118.46) and (226.91,121.11) .. (223.64,121.11) -- (205.91,121.11) .. controls (202.65,121.11) and (200,118.46) .. (200,115.2) -- cycle ;
%Shape: Circle [id:dp007696978323782355] 
\draw  [fill={rgb, 255:red, 0; green, 0; blue, 0 }  ,fill opacity=1 ] (213.33,66.5) .. controls (213.33,65.67) and (214,65) .. (214.83,65) .. controls (215.66,65) and (216.33,65.67) .. (216.33,66.5) .. controls (216.33,67.33) and (215.66,68) .. (214.83,68) .. controls (214,68) and (213.33,67.33) .. (213.33,66.5) -- cycle ;
%Shape: Circle [id:dp21043261726007834] 
\draw  [fill={rgb, 255:red, 0; green, 0; blue, 0 }  ,fill opacity=1 ] (213.33,105.83) .. controls (213.33,105) and (214,104.33) .. (214.83,104.33) .. controls (215.66,104.33) and (216.33,105) .. (216.33,105.83) .. controls (216.33,106.66) and (215.66,107.33) .. (214.83,107.33) .. controls (214,107.33) and (213.33,106.66) .. (213.33,105.83) -- cycle ;
%Straight Lines [id:da2203854516169762] 
\draw    (154.83,66.5) -- (214.33,105.83) ;
%Rounded Rect [id:dp20206108863755612] 
\draw   (260,57.91) .. controls (260,54.65) and (262.65,52) .. (265.91,52) -- (283.64,52) .. controls (286.91,52) and (289.56,54.65) .. (289.56,57.91) -- (289.56,115.2) .. controls (289.56,118.46) and (286.91,121.11) .. (283.64,121.11) -- (265.91,121.11) .. controls (262.65,121.11) and (260,118.46) .. (260,115.2) -- cycle ;
%Shape: Circle [id:dp20448707493924356] 
\draw  [fill={rgb, 255:red, 0; green, 0; blue, 0 }  ,fill opacity=1 ] (273.33,66.5) .. controls (273.33,65.67) and (274,65) .. (274.83,65) .. controls (275.66,65) and (276.33,65.67) .. (276.33,66.5) .. controls (276.33,67.33) and (275.66,68) .. (274.83,68) .. controls (274,68) and (273.33,67.33) .. (273.33,66.5) -- cycle ;
%Shape: Circle [id:dp40968388703557157] 
\draw  [fill={rgb, 255:red, 0; green, 0; blue, 0 }  ,fill opacity=1 ] (273.33,105.83) .. controls (273.33,105) and (274,104.33) .. (274.83,104.33) .. controls (275.66,104.33) and (276.33,105) .. (276.33,105.83) .. controls (276.33,106.66) and (275.66,107.33) .. (274.83,107.33) .. controls (274,107.33) and (273.33,106.66) .. (273.33,105.83) -- cycle ;
%Rounded Rect [id:dp2534445619278062] 
\draw   (320,57.91) .. controls (320,54.65) and (322.65,52) .. (325.91,52) -- (343.64,52) .. controls (346.91,52) and (349.56,54.65) .. (349.56,57.91) -- (349.56,115.2) .. controls (349.56,118.46) and (346.91,121.11) .. (343.64,121.11) -- (325.91,121.11) .. controls (322.65,121.11) and (320,118.46) .. (320,115.2) -- cycle ;
%Shape: Circle [id:dp5477335212205747] 
\draw  [fill={rgb, 255:red, 0; green, 0; blue, 0 }  ,fill opacity=1 ] (333.33,66.5) .. controls (333.33,65.67) and (334,65) .. (334.83,65) .. controls (335.66,65) and (336.33,65.67) .. (336.33,66.5) .. controls (336.33,67.33) and (335.66,68) .. (334.83,68) .. controls (334,68) and (333.33,67.33) .. (333.33,66.5) -- cycle ;
%Shape: Circle [id:dp1323614313801228] 
\draw  [fill={rgb, 255:red, 0; green, 0; blue, 0 }  ,fill opacity=1 ] (333.33,105.83) .. controls (333.33,105) and (334,104.33) .. (334.83,104.33) .. controls (335.66,104.33) and (336.33,105) .. (336.33,105.83) .. controls (336.33,106.66) and (335.66,107.33) .. (334.83,107.33) .. controls (334,107.33) and (333.33,106.66) .. (333.33,105.83) -- cycle ;
%Rounded Rect [id:dp7269905027905943] 
\draw   (380,57.91) .. controls (380,54.65) and (382.65,52) .. (385.91,52) -- (403.64,52) .. controls (406.91,52) and (409.56,54.65) .. (409.56,57.91) -- (409.56,115.2) .. controls (409.56,118.46) and (406.91,121.11) .. (403.64,121.11) -- (385.91,121.11) .. controls (382.65,121.11) and (380,118.46) .. (380,115.2) -- cycle ;
%Shape: Circle [id:dp7841073355568655] 
\draw  [fill={rgb, 255:red, 0; green, 0; blue, 0 }  ,fill opacity=1 ] (393.33,66.5) .. controls (393.33,65.67) and (394,65) .. (394.83,65) .. controls (395.66,65) and (396.33,65.67) .. (396.33,66.5) .. controls (396.33,67.33) and (395.66,68) .. (394.83,68) .. controls (394,68) and (393.33,67.33) .. (393.33,66.5) -- cycle ;
%Shape: Circle [id:dp06481754874552936] 
\draw  [fill={rgb, 255:red, 0; green, 0; blue, 0 }  ,fill opacity=1 ] (393.33,105.83) .. controls (393.33,105) and (394,104.33) .. (394.83,104.33) .. controls (395.66,104.33) and (396.33,105) .. (396.33,105.83) .. controls (396.33,106.66) and (395.66,107.33) .. (394.83,107.33) .. controls (394,107.33) and (393.33,106.66) .. (393.33,105.83) -- cycle ;
%Straight Lines [id:da6620091434408107] 
\draw    (214.83,66.5) -- (274.33,105.33) ;
%Straight Lines [id:da5964883134430061] 
\draw    (275.33,67) -- (334.83,105.83) ;
%Straight Lines [id:da18212675028009984] 
\draw    (334.83,66.5) -- (394.33,105.33) ;
%Straight Lines [id:da676118702035365] 
\draw    (154.83,66.5) -- (274.33,105.33) ;
%Straight Lines [id:da5976370733812586] 
\draw    (154.83,66.5) -- (334.83,105.83) ;
%Straight Lines [id:da7415437867243518] 
\draw    (154.58,66.25) -- (393.33,105.83) ;
%Straight Lines [id:da2698707603340673] 
\draw    (214.83,66.5) -- (334.83,105.83) ;
%Straight Lines [id:da23280771236397702] 
\draw    (275.33,67) -- (395.33,106.33) ;
%Straight Lines [id:da3181439633108908] 
\draw    (213.96,66.37) -- (394.83,105.83) ;

% Text Node
\draw (118,60) node [anchor=north west][inner sep=0.75pt]   [align=left] {$a_1$};
% Text Node
\draw (118,99) node [anchor=north west][inner sep=0.75pt]   [align=left] {$b_1$};
% Text Node
\draw (415,60) node [anchor=north west][inner sep=0.75pt]   [align=left] {$a_5$};
% Text Node
\draw (415,99) node [anchor=north west][inner sep=0.75pt]   [align=left] {$b_5$};
% Text Node
\draw (146,127) node [anchor=north west][inner sep=0.5pt]   [align=left] {$V_1$};
% Text Node
\draw (206,127) node [anchor=north west][inner sep=0.5pt]   [align=left] {$V_2$};
% Text Node
\draw (266,127) node [anchor=north west][inner sep=0.5pt]   [align=left] {$V_3$};
% Text Node
\draw (327,127) node [anchor=north west][inner sep=0.5pt]   [align=left] {$V_4$};
% Text Node
\draw (388,127) node [anchor=north west][inner sep=0.5pt]   [align=left] {$V_5$};

\end{tikzpicture}
\caption{The $5$-half-graph, with $a_i$ and $b_i$ in the same color class.}
\end{figure}

\subsection{Finite directed templates}

Let $D$ be a finite digraph, possibly with loops and pairs of opposite arcs.  Its \emph{$m$-half-blowup}, denoted by $D[m]$, is the $m$-colored graph with color classes $V_1,\dots,V_m$, each a copy of $V(D)$.  For $i<j$, the vertices $u_i\in V_i$ and $v_j\in V_j$ are adjacent if and only if $uv$ is an arc of $D$.

We normalize digraph density by $\dens(D)=\frac{e(D)}{|V(D)|^2}$, counting a loop as one arc.  Every pair of classes in $D[m]$ has density $\dens(D)$.  A single non-loop arc gives the half-graph, while two vertices joined in both directions give a complete bipartite graph with the same-color matching removed; see Figure~\ref{fig:blowup}.

\begin{figure}[ht]
\centering
\includegraphics[width=0.7\textwidth]{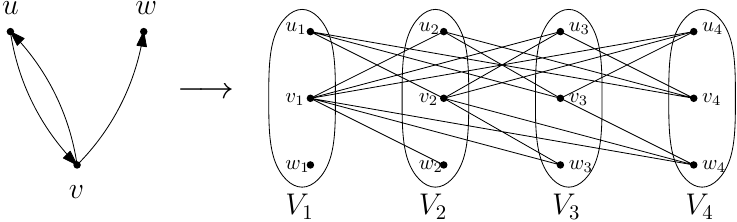}
\caption{A digraph $D$ and its $4$-half-blowup $D[4]$.}
\label{fig:blowup}
\end{figure}

In a half-blowup, every ordered pair of classes carries the same adjacency matrix.  An arbitrary colored graph need not have any such consistency.  Nevertheless, our first result states that, after an arbitrarily small density loss, it always contains a homomorphic image of a large half-blowup.  Crucially, the number of vertices in the directed template is independent of the number of classes to be realized.

\begin{theorem}\label{thm:blowup}
For every $\varepsilon>0$ and every positive integer $m$, there exists $k=k(\varepsilon,m)\ge m$ such that every $d$-dense $k$-colored graph $G$ admits a colored homomorphism from $D[m]$ for some possibly looped digraph $D$ satisfying
$  \dens(D)\ge d-\varepsilon$
and $  |V(D)|=\lceil\varepsilon^{-1}\rceil.$
\end{theorem}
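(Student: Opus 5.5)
Set $t=\lceil\varepsilon^{-1}\rceil$. The plan has three steps: compress each colour class to a short profile vector, use Ramsey's theorem to make the pairwise interactions between classes look identical, and read off a digraph $D$ on $[t]$ whose half-blowup maps into $G$. The compression is the \emph{low-resolution sampling} idea from the abstract.

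\emph{Step 1: compress each class.} For each colour class $V_i$ of $G$, choose a $t$-tuple $(x_i^1,\dots,x_i^t)$ of vertices of $V_i$, allowing repeats. A colored homomorphism from $D[m]$, with $V(D)=[t]$, can then send vertex $a$ of layer $\ell$ to $x_{i_\ell}^a$, where $i_1<\dots<i_m$ are the selected classes. Vertices in the same layer share a colour and may be identified freely, so only cross-layer edges must be checked. For $i<j$, define the $t\times t$ zero-one matrix $M_{ij}(a,b)=\mathbf 1[x_i^ax_j^b\in E(G)]$.

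\emph{Step 2: make the pairs uniform.} Colour each pair $i<j$ by $M_{ij}$, using at most $2^{t^2}$ colours. By Ramsey's theorem, if $k\ge R_{2^{t^2}}(m)$ there are indices $i_1<\dots<i_m$ on which every pair carries the same matrix $M$. Let $D$ be the digraph on $[t]$ with arc $ab$ (loops allowed) exactly when $M(a,b)=1$. Then $D[m]$ maps into $G$ by the map in Step 1, and $\dens(D)$ is the density of $M$. This is why $k$ depends only on $\varepsilon$ and $m$.

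\emph{Step 3: keep the density.} The remaining task is to choose the tuples so that the monochromatic pairs have density at least $d-\varepsilon$. Ramsey gives no control over which colour survives, so we should arrange that \emph{every} matrix $M_{ij}$ has density at least $d-\varepsilon$, for all pairs. This is the main obstacle. Independent random tuples give expected density at least $d$ for every pair. However, concentration with only $t$ samples per class fails, and there are $\binom k2$ pairs, so a union bound is hopeless.

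The first fix I would try is to replace vertices by \emph{quantiles of degree profiles}, in the spirit of the abstract's ``pairwise bipartite degree profiles''. First pass to a large subfamily of classes by Ramsey or pigeonhole so that the pairwise structure is tame. Then order each class $V_j$ by degree into a partner class and take $x_j^b$ to be the vertex at quantile $b/t$. The density of $M_{ij}$ then approximates the integral of the degree function up to an error $O(1/t)\le\varepsilon$. Because ordering by degree relative to different partner classes conflicts, I would iterate a Ramsey-type refinement. The intermediate goal would be a lemma saying that among sufficiently many classes there are $M'$ classes, with $M'$ depending on $\varepsilon$ and $m$, and $t$-point samples in each such that every pair of samples has density at least $d-\varepsilon$. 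Once that lemma is in hand, Step 2 finishes the proof.

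As a fallback, I would use a weighted $D$ whose weights are multiples of $1/t$, then duplicate vertices to get exactly $t$ vertices. The density loss from rounding the weights to multiples of $1/t$ is at most $\varepsilon$.
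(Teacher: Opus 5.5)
Your Steps 1 and 2 match the paper's Ramsey step: color each pair of classes by the adjacency matrix between labelled samples and read $D$ off a monochromatic subfamily. But Step 3, which you correctly flag as the obstacle, is the whole content of the theorem, and it is left open.

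The intermediate lemma you aim for, with samples of size exactly $t=\lceil\varepsilon^{-1}\rceil$ and loss $\varepsilon$, is no weaker than the theorem itself: apply the theorem with $m=M'$ and take the images of the layers. So nothing has been reduced.

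The quantile idea fails for a structural reason. Choosing $x_i^a$ by quantiles of $d(\cdot,V_j)$ controls $\frac1t\sum_a d(x_i^a,V_j)$. The density of $M_{ij}$, however, is $\frac1t\sum_a d(x_i^a,\{x_j^1,\dots,x_j^t\})$, which depends on adjacency between the particular sampled vertices. Take $V_i=A_1\cup A_2$ and $V_j=B_1\cup B_2$ with equal halves, and let $G[V_i,V_j]$ consist of the complete pairs $A_1B_1$ and $A_2B_2$. Every degree equals $\frac12$, yet samples inside $A_1$ and $B_2$ give $M_{ij}=0$. The sample in one class must be chosen against the \emph{already fixed} sample in the other, not against the whole class, and this must hold for all pairs of a subfamily simultaneously.

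The missing ingredient is Corollary~\ref{cor:main}, which comes from Lemma~\ref{lem:lowres} and Theorem~\ref{thm:technical}. One samples in $r\approx 4/\varepsilon$ rounds. A block chosen in round $q+1$ must be a semi-discretization against every whole class $V_j$ and a full discretization against every earlier block $S_j^p$. Concentration then involves only the new random block, which is good for any fixed $j$ with probability above $\frac34$. A hypergraph Ramsey argument then passes to a subfamily where each class has one block good for all others. It colors each $u$-set by the first position having no block good for all the others, and derives a contradiction at the middle index of a homogeneous $(4u+1)$-set. Same-round pairs stay uncontrolled but form only a $\frac1r$ fraction (Lemma~\ref{lem:disc-comb}).

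The resulting sample size $s$ is independent of the number of classes but much larger than $\lceil\varepsilon^{-1}\rceil$. So the paper applies the corollary with accuracy $\gamma=\frac{1}{2t^2(t-1)}$, runs your Ramsey step with $2^{s^2}$ colors, and only then compresses the $s$-vertex template to $t$ vertices by uniform random $t$-tuples.

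Your fallback points in that direction, but as stated it does not close the argument. Random $t$-tuples lose a factor $1-\frac1t$, i.e.\ up to $\dens(D)/t$. This can equal $\varepsilon$ when $\varepsilon^{-1}\in\mathbb Z$ and $d$ is near $1$, leaving no room for the sampling loss. The paper absorbs that loss by integrality. If the best $t$-vertex image had $a<(d-\varepsilon)t^2$ arcs, then $a\le t(t-1)-1$, which forces $d-\dens(D)>\frac{1}{t^2(t-1)}>\gamma$, a contradiction. A looped $D$ is handled separately via the complete looped digraph on $t$ vertices.
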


The bound $\lceil\varepsilon^{-1}\rceil$ is optimal as a bound uniform in $m$; see Remark~\ref{rem:sharp-template}.  Thus, for limiting density questions, half-blowups are not merely a useful source of examples: they suffice to witness every obstruction.  To make this precise, we translate the prescribed colors into orientations.

For a colored graph $F$ and a linear order $\preceq$ of its colors, let $F^{\preceq}$ be the digraph obtained by orienting each edge from the smaller color to the larger color.  Write $\Dcal(F)$ for the family of all such orientations, and put $\Dcal(\Fcal)=\bigcup_{F\in\Fcal}\Dcal(F)$.

\begin{figure}[ht]
\centering
\includegraphics[width=0.8\textwidth]{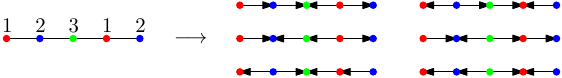}
\caption{The path with color sequence $1-2-3-1-2$ and its associated orientations.  The top-left orientation corresponds to $1\prec2\prec3$.}
\end{figure}

Every member of $\Dcal(F)$ is acyclic.  If $F$ has at most $m$ colors, then $F$ maps coloredly into $D[m]$ exactly when some $F^{\preceq}$ maps into $D$: the order of the image classes specifies $\preceq$, and projection onto $D$ gives the directed homomorphism.  For a digraph family $\Hcal$, let $\pi(\Hcal)$ be the supremum of the densities of non-empty finite digraphs receiving no homomorphism from any member of $\Hcal$, with value zero if no such host exists.  Theorem~\ref{thm:blowup} now gives an exact reduction.

\begin{theorem}\label{thm:digraph}
For every non-empty finite family $\Fcal$ of colored graphs,
\[
  \pc(\Fcal)=\pi(\Dcal(\Fcal)).
\]
\end{theorem}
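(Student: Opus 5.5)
We prove the two inequalities separately. Let $m$ be the maximum number of colors in a member of $\Fcal$, and write $\pi^*=\pi(\Dcal(\Fcal))$.

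\emph{Upper bound $\pc(\Fcal)\le\pi^*$.} Fix $\varepsilon>0$ and let $d>\pi^*+\varepsilon$. Take $k=k(\varepsilon,m)$ from Theorem~\ref{thm:blowup}. Every $d$-dense $k'$-colored graph $G$ with $k'\ge k$ contains a $d$-dense $k$-colored subgraph, obtained by keeping any $k$ of its classes. Hence $G$ admits a colored homomorphism from $D[m]$ for some digraph $D$ with $\dens(D)\ge d-\varepsilon>\pi^*$. By the definition of $\pi^*$ as a supremum, $D$ receives a homomorphism from some $F^{\preceq}\in\Dcal(\Fcal)$. By the observation preceding the theorem, $F$ (which has at most $m$ colors) then maps coloredly into $D[m]$. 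Composing this map with the colored homomorphism $D[m]\to G$ gives a colored homomorphism $F\to G$; a composition of colored homomorphisms is colored, since the "same color iff same color" condition composes. Therefore $\pc(\Fcal,k')\le d$ for all $k'\ge k$. Letting $\varepsilon\to0$ and $d\downarrow\pi^*+\varepsilon$ gives the upper bound.

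\emph{Lower bound $\pc(\Fcal)\ge\pi^*$.} Assume $\pi^*>0$; otherwise there is nothing to prove. Take any finite digraph $D$ receiving no homomorphism from $\Dcal(\Fcal)$. For every $k$, consider the half-blowup $D[k]$. It is $\dens(D)$-dense and $k$-colored, and every class is nonempty because $D$ is non-empty. Suppose some $F\in\Fcal$ maps coloredly into $D[k]$ via $\varphi$. The colors of $F$ inject into the classes of $D[k]$, so the index order of the image classes induces a linear order $\preceq$ on the colors of $F$. An edge $uv$ of $F$ with $u$ of smaller color maps to an edge between $V_i$ and $V_j$ with $i<j$, and by construction this edge corresponds to an arc $\varphi(u)\varphi(v)$ of $D$ after projection. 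Thus projection gives a homomorphism $F^{\preceq}\to D$, a contradiction. (Here $F$ may have fewer than $k$ colors; the argument only uses the injectivity of the color map, not surjectivity.) Hence $\pc(\Fcal,k)\ge\dens(D)$ for all $k$, and taking the supremum over $D$ gives $\pc(\Fcal)\ge\pi^*$.

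The only delicate points are bookkeeping. First, the edgeless case: if $\Fcal$ contains an edgeless $F$, then $F^{\preceq}$ is arc-free and maps into every non-empty digraph, so $\pi^*=0=\pc(\Fcal)$, consistent with both directions. Second, the equivalence "$F\to D[m]$ coloredly iff some $F^{\preceq}\to D$" is needed when $F$ has fewer colors than the number of blowup classes. One direction requires choosing image classes in the order $\preceq$; the other requires reading off the order from an injective color map, as done above. This is where I would expect to be most careful, but all the real difficulty is contained in Theorem~\ref{thm:blowup}, which we assume.
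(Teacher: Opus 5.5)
Your proof is correct and follows the paper's own argument. The lower bound takes half-blowups $D[k]$ of homomorphism-free digraphs and uses the projection direction of Lemma~\ref{lem:half-blowup-test}. The upper bound applies Theorem~\ref{thm:blowup} with a density margin and then lifts $F^{\preceq}\to D$ to $F\to D[m]\to G$. Your step of restricting a $k'$-colored host to $k$ of its classes is just the monotonicity of $\pc(\Fcal,k)$ in $k$, which the paper uses implicitly.
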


This identifies the invariant replacing chromatic number: the homomorphism obstructions formed by all color-order orientations.  It also connects the problem to the directed extremal theory of Brown, Erd\H{o}s and Simonovits~\cite{BroES73,BES1985}, where weighted finite digraphs and quadratic optimization describe extremal constructions.

\subsection{The value set and its consequences}

For a non-empty finite digraph $D$, its \emph{Lagrangian} is
\[
  \lambda(D)=\max_{\substack{x_v\ge0\\ \sum_{v\in V(D)}x_v=1}}
  \sum_{uv\in E(D)}x_u x_v,
\]
where a loop $vv$ contributes $x_v^2$. The weights represent the relative sizes of vertex classes in a blowup of $D$, so $\lambda(D)$ is the largest density approached by its finite vertex-blowups.  Our second main theorem determines exactly which of these values arise from finite colored forbidden families.  Set
\begin{align*}
  \Picol&=\{\pc(\Fcal):\Fcal\text{ is a non-empty finite family of colored graphs}\},\\
  \Ldig&=\{\lambda(D):D\text{ is non-empty, finite and loopless}\}.
\end{align*}

\begin{theorem}\label{thm:value-set}
$\Picol=\Ldig$.
\end{theorem}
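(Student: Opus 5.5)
By Theorem~\ref{thm:digraph} it suffices to work entirely with digraph homomorphism densities. I will prove the two inclusions separately. Two elementary facts about Lagrangians will be used throughout.
\begin{itemize}
\item[(i)] For every digraph $D$ one has $\dens(D)\le\lambda(D)$, by taking uniform weights. Conversely, vertex-blowups of $D$ with rational weights have densities tending to $\lambda(D)$ and receive exactly the same homomorphisms as $D$. Hence, for any digraph family $\Hcal$,
\[
\pi(\Hcal)=\sup\{\lambda(D): D \text{ receives no homomorphism from } \Hcal\}.
\]
\item[(ii)] If $\varphi:T\to D$ is a homomorphism between loopless digraphs, then $\lambda(T)\le\lambda(D)$. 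To see this, push the weights forward along $\varphi$. Each ordered pair $(u,v)$ with $uv\in E(T)$ lands on an arc $\varphi(u)\varphi(v)$ of $D$, so no arc weight is lost.
\end{itemize}
Also, $\lambda(T^{\opp})=\lambda(T)$, where $T^{\opp}$ is the digraph with all arcs reversed.

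\textbf{Inclusion $\Picol\subseteq\Ldig$.} Let $\Hcal=\Dcal(\Fcal)$. This is a finite family of acyclic loopless digraphs; let $t$ be the maximum number of vertices of a member.
\begin{enumerate}
\item If some member of $\Fcal$ is edgeless, the density is $0=\lambda(\text{single vertex})$.
\item Otherwise a looped host receives every member of $\Hcal$, so by (i) the supremum may be taken over loopless $\Hcal$-free $D$.
\item For such $D$, I take an optimal weight vector with minimal support. If two support vertices $u,v$ have no arc in either direction, the objective is linear along the segment that shifts weight between $u$ and $v$. Shifting all of it to one endpoint does not decrease the value and shrinks the support. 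So the support induces a semicomplete digraph $S$ (possibly with opposite arcs), and $\lambda(S)=\lambda(D)$.
\item $S$ is an induced subdigraph of $D$, so it is still $\Hcal$-free.
\item By the Erd\H{o}s--Moser bound, a semicomplete digraph on at least $2^{t}$ vertices contains a transitive tournament on $t$ vertices. Every acyclic digraph on at most $t$ vertices maps into such a tournament, via a topological order. Hence $|V(S)|<2^t$.
\end{enumerate}
The supremum is therefore a maximum over finitely many loopless digraphs, so $\pc(\Fcal)\in\Ldig$.

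\textbf{Inclusion $\Ldig\subseteq\Picol$.} Fix a loopless non-empty $D$. For a semicomplete digraph $T$, consider the colored graph $T[m]$.

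\emph{Key lemma.} For $m$ large in terms of $|V(T)|$ and $|V(D)|$, if some orientation $T[m]^{\preceq}$ maps into $D$, then $T\to D$ or $T^{\opp}\to D$.

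To prove it, first apply Erd\H{o}s--Szekeres to the color order $\preceq$ versus the natural index order. This yields many classes on which $\preceq$ agrees with, or reverses, the index order. Next apply pigeonhole/Ramsey to the restrictions of the homomorphism to these classes. Each restriction is a map $V(T)\to V(D)$, so two classes $i<j$ receive the same map $\varphi$. The arcs between classes $i$ and $j$ then show that $\varphi$ is a homomorphism $T\to D$, or $T^{\opp}\to D$ in the reversed case.

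Consequently, whenever $\lambda(T)>\lambda(D)$, fact (ii) shows that no orientation of $T[m]$ maps into $D$.

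\emph{Construction and lower bound.} Let $\Fcal$ consist of the graphs $T[m]$ for semicomplete $T$ in a suitable finite set $\mathcal T$ with $\lambda(T)>\lambda(D)$ for every $T\in\mathcal T$. The blowups of $D$ are $\Dcal(\Fcal)$-free, so (i) gives $\pi(\Dcal(\Fcal))\ge\lambda(D)$.

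\emph{Upper bound.} Suppose a host digraph $H$ has $\lambda(H)>\lambda(D)$. If $H$ has a loop, it receives everything. Otherwise the support reduction in step 3 above yields a semicomplete $S\subseteq H$ with $\lambda(S)>\lambda(D)$, and $S[m]$ maps coloredly into $H[m]$. So it suffices that every such $S$ contains some $T\in\mathcal T$.

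\textbf{Main obstacle.} I expect the hard step to be choosing $\mathcal T$ finite: I need that every semicomplete $S$ with $\lambda(S)>\lambda(D)$ contains a subdigraph $T$ with $\lambda(T)>\lambda(D)$ on boundedly many vertices. A priori the values $\lambda(S)$ could decrease to $\lambda(D)$ along ever larger minimal witnesses.

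What I would try first is to use the Karush--Kuhn--Tucker conditions at the optimum of $\lambda(S)$. At that optimum every support vertex has weighted degree exactly $2\lambda(S)$, so the optimal weights solve a linear system. This should let me compare minimal witnesses with $D$ itself, possibly after replacing $D$ by its own semicomplete optimal support.

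Failing that, I would take the monotone sequence $\Fcal_N$ of all colored graphs on at most $N$ vertices having no orientation into $D$. By the first inclusion and by Theorem~\ref{thm:digraph}, each $\pc(\Fcal_N)$ lies in $\Ldig$ and is at least $\lambda(D)$. I would then show that this sequence stabilizes at $\lambda(D)$, using the finiteness of semicomplete digraphs of each fixed size.
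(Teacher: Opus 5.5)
Your first inclusion is correct and is essentially the paper's Proposition~\ref{prop:acyclic-lagrangian}. Your key lemma is the paper's Lemma~\ref{lem:layered}. The Erd\H{o}s--Szekeres step is unnecessary there: once pigeonhole gives two layers with the same map, their relative order under $\preceq$ already decides between $T\to D$ and $T^{\opp}\to D$.

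The second inclusion, however, has a genuine gap, exactly where you flag it: $\mathcal T$ is never made finite. Neither of your fallbacks supplies a \emph{uniform} bound on the size of the semicomplete supports $S$. The KKT idea points nowhere concrete. Finiteness of semicomplete digraphs of each fixed size is useless without such a bound. Note also that your step~5 uses the \emph{largest} member of $\Dcal(\Fcal)$, so the support bound $2^t$ grows with the family. Applied to your $\Fcal_N$, it gives nothing uniform in $N$.

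The missing idea is that members of the forbidden family need not be graphs $T[m]$ with $\lambda(T)>\lambda(D)$. The lower bound only requires that no orientation of a member maps to $D$. With $n=|V(D)|$, add the rainbow clique $K_{n+1}^{\mathrm{rb}}$. Its only orientation is $T_{n+1}$. A homomorphism from a tournament into a loopless digraph is injective, so $T_{n+1}\nrightarrow D$ and blowups of $D$ remain admissible hosts. This holds even when $\lambda(T_{n+1})<\lambda(D)$, so your ``bounded witness with larger Lagrangian'' formulation is a stronger statement than needed.

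Now every $\Dcal(\Fcal)$-free host is $T_{n+1}$-free. Run step~5 with the \emph{smallest} member, as the paper does. Then the optimal semicomplete support of any such host has fewer than $2^n$ vertices. Take $\mathcal T$ to be the finite set of loopless semicomplete digraphs on fewer than $2^n$ vertices with $\lambda>\lambda(D)$, and put $\Fcal=\{K_{n+1}^{\mathrm{rb}}\}\cup\{T[m_T]:T\in\mathcal T\}$.

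The upper bound then closes. Suppose a free host $H$ had $\lambda(H)>\lambda(D)$. Its support $S$ would lie in $\mathcal T$ up to isomorphism, and $(S[m_S])^{<}\to S\to H$, a contradiction. This is precisely the paper's Theorem~\ref{thm:exact-realization}. Your $\Fcal_N$ fallback would also succeed for large $N$, but only because $\Fcal_N$ then contains $K_{n+1}^{\mathrm{rb}}$, which is the observation your proposal is missing.
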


The two inclusions have different content.  For a given forbidden family, acyclicity bounds the support of an optimal directed template: a semicomplete digraph with too many vertices contains a large transitive tournament, and hence one of the forbidden orientations.  Conversely, every finite loopless digraph can be realized by a finite colored forbidden family.  We prove this directly by excluding the finitely many bounded templates with larger Lagrangian.  Thus, the value-set theorem requires neither an infinite approximation nor a well-ordering argument.

\begin{corollary}\label{cor:rational}
For every non-empty finite colored graph family $\Fcal$, the number $\pc(\Fcal)$ is rational.
\end{corollary}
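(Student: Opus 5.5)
The plan is to combine Theorem~\ref{thm:value-set} with the standard fact that the Lagrangian of a finite loopless digraph is rational. By Theorem~\ref{thm:value-set}, $\pc(\Fcal)=\lambda(D)$ for some non-empty finite loopless digraph $D$. It therefore suffices to prove that $\lambda(D)\in\mathbb{Q}$ for every such $D$.

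First we symmetrize. Since $x_ux_v=x_vx_u$, we have $\sum_{uv\in E(D)}x_ux_v=x^{\mathsf T}Ax$, where $A=\tfrac12(M+M^{\mathsf T})$ and $M$ is the adjacency matrix of $D$. Because $D$ is loopless, $A$ is a symmetric rational matrix with zero diagonal; off the diagonal its entries lie in $\{0,\tfrac12,1\}$. The maximum of $x^{\mathsf T}Ax$ over the simplex $\Delta$ is attained, since $\Delta$ is compact.

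Next we normalize an optimal point. Among all maximizers, choose $x$ with minimal support $S$. We claim that $x$ may be taken to be a vertex of a rational polytope. On the face $\Delta_S$, the point $x$ is a local maximum in the relative interior, so the KKT conditions give $A_{SS}x_S=\mu\mathbf 1$ with $\mu=\lambda(D)$ (multiply by $x_S^{\mathsf T}$ to identify $\mu$). If the affine system $\{y:A_{SS}y=\mu\mathbf 1,\ \mathbf 1^{\mathsf T}y=1\}$ had a solution set of positive dimension, pick a direction $z\neq0$ with $A_{SS}z=0$ and $\mathbf 1^{\mathsf T}z=0$. Then $(x+tz)^{\mathsf T}A(x+tz)=\lambda+2t\,x^{\mathsf T}Az+t^2z^{\mathsf T}Az=\lambda$, because $x^{\mathsf T}Az=z^{\mathsf T}Ax=\mu\,\mathbf 1^{\mathsf T}z=0$ and $z^{\mathsf T}Az=0$. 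Moving $t$ until some coordinate hits zero yields a maximizer with smaller support, contradicting minimality. Hence the unknowns $(x_S,\mu)$ form the unique solution of the linear system
\[
A_{SS}x_S-\mu\mathbf 1=0,\qquad \mathbf 1^{\mathsf T}x_S=1,
\]
which has rational coefficients. A linear system with rational coefficients and a unique solution has a rational solution, by Cramer's rule or Gaussian elimination over $\mathbb Q$. It follows that $\mu=\lambda(D)\in\mathbb Q$.

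The only delicate step is the uniqueness argument, that is, the support-minimization showing that the solution set is a single point. The computation above handles it, and with it the corollary follows directly from Theorem~\ref{thm:value-set}.
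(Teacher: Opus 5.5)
Your top-level reduction is the paper's. Theorem~\ref{thm:value-set} puts $\pc(\Fcal)$ in $\Ldig$, and rationality of loopless Lagrangians is Lemma~\ref{lem:lag-rational}, which the paper proves beforehand. The paper's proof of the corollary is therefore a one-line citation.

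The difference lies in how you prove that lemma. The paper takes any optimal $x$ with support $S$. It notes that $\{(y,\theta): M[S]y=\theta\mathbf 1,\ \mathbf 1^{\mathsf T}y=1,\ y\ge0\}$ is a non-empty rational polytope, so it contains a rational point $(y,\theta')$. Symmetry then gives $\theta=y^{\mathsf T}M[S]x=x^{\mathsf T}M[S]y=\theta'$, with no support minimization or uniqueness needed. You instead minimize the support so that the stationarity system has a unique solution, and then apply Cramer's rule. This is more hands-on, and it also shows that the minimal-support optimizer is itself unique and rational.

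As written, your uniqueness step has a small hole. You only exclude kernel vectors of the form $(z,0)$, with $\mu$ held fixed. Cramer's rule over $\mathbb Q$ needs the square system in the joint unknowns $(x_S,\mu)$ to be nonsingular. So you must also exclude $(z,\nu)$ with $A_{SS}z=\nu\mathbf 1$, $\mathbf 1^{\mathsf T}z=0$ and $\nu\ne0$. This takes one line, using the same symmetry as the paper: $\nu=x_S^{\mathsf T}A_{SS}z=(A_{SS}x_S)^{\mathsf T}z=\mu\,\mathbf 1^{\mathsf T}z=0$. Your support-reduction argument then gives $z=0$, and the rest of the proof goes through.
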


If every forbidden member has an edge, the density is attained by the half-blowups of a finite loopless digraph avoiding the prescribed family.  This extends the familiar finite extremal constructions from ordinary graph Tur\'an theory.  The complete bidirected digraph on $r-1$ vertices has Lagrangian $1-\frac{1}{r-1}$ and recovers the ordinary chromatic-number bound; single arcs and genuinely directed templates produce further values.

The Brown--Erd\H{o}s--Simonovits well-ordering theorem~\cite{BES1985} also transfers to the colored setting.  We record the resulting gap property separately; it is not needed for Theorem~\ref{thm:value-set}.

\begin{property}\label{property:no-right}
For every $\alpha\in[0,1)$ there exists $\varepsilon>0$ such that
$ (\alpha,\alpha+\varepsilon)\cap\Picol=\varnothing$.
\end{property}

For a single forbidden graph, the first gap has a particularly simple explanation.  Every digraph with at least one arc receives a homomorphism from the one-arc digraph, whose half-blowups are the half-graphs.

\begin{corollary}\label{cor:jump}
A $k$-colored graph $F$ satisfies $\pc(F)=0$ if and only if it admits a colored homomorphism into the $k$-half-graph.  In particular, $\pc(F)\notin(0,\frac14)$.  The value $\frac14$ is attained by $P_5$ with color sequence $1-2-3-1-2$.
\end{corollary}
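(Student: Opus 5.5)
Everything goes through Theorem~\ref{thm:digraph}, which gives $\pc(F)=\pi(\Dcal(F))$. For the equivalence, I first show that $F$ admits a colored homomorphism into the $k$-half-graph if and only if some orientation $F^{\preceq}\in\Dcal(F)$ maps homomorphically into the one-arc digraph $\vec{K}_2=\{a\to b\}$. By definition the $k$-half-graph is $\vec K_2[k]$: class $V_i=\{a_i,b_i\}$, and $a_ib_j$ is an edge exactly when $i<j$, which matches the arc $ab$. The equivalence is then the observation made before Theorem~\ref{thm:digraph}: a colored homomorphism of a $k$-colored $F$ into $D[k]$ is the same thing as a homomorphism of some $F^{\preceq}$ into $D$.

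Now suppose some $F^{\preceq}\to\vec K_2$. Every non-empty finite digraph $H$ with positive density has an arc or a loop, so $\vec K_2\to H$ by composition, and hence $F^{\preceq}\to H$. Therefore every digraph avoiding homomorphisms from $\Dcal(F)$ has no arcs, so its density is $0$. This gives $\pi(\Dcal(F))=0$, and hence $\pc(F)=0$.

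Conversely, suppose no orientation maps into $\vec K_2$. Then $\vec K_2$ itself is a host receiving no homomorphism from $\Dcal(F)$, and it has density $\dens(\vec K_2)=1/4$. So $\pc(F)=\pi(\Dcal(F))\ge 1/4$. This gives the dichotomy $\pc(F)=0$ or $\pc(F)\ge\frac14$, and in particular $\pc(F)\notin(0,\frac14)$. The one point needing care is the edgeless case. If $F$ has no edges, every orientation maps into a single vertex of $\vec K_2$; since arcs are not required, we may send everything to $a$. So $F$ falls in the first case, consistent with $\pc=0$.

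For $P_5$ with colors $1\!-\!2\!-\!3\!-\!1\!-\!2$, write the vertices as $v_1v_2v_3v_4v_5$.

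\textbf{Lower bound.} I check that no orientation maps into $\vec K_2$. A homomorphism into $\vec K_2$ requires every vertex to be a pure source or a pure sink, because a vertex sent to $a$ has only out-arcs and a vertex sent to $b$ has only in-arcs. The edges are $v_1v_2$ (colors $1,2$), $v_2v_3$ ($2,3$), $v_3v_4$ ($3,1$) and $v_4v_5$ ($1,2$). Consider the vertex $v_2$, of color $2$. Its neighbors $v_1$ and $v_3$ have colors $1$ and $3$, so $2$ must lie on the same side of both $1$ and $3$ in the order. Thus color $2$ is either the minimum or the maximum among $\{1,2,3\}$. The vertex $v_4$, of color $1$, has neighbors of colors $3$ and $2$, so color $1$ is also extreme. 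Likewise $v_3$, of color $3$, has neighbors of colors $2$ and $1$, so color $3$ is extreme. Three colors cannot all be extreme, so no orientation maps into $\vec K_2$, and hence $\pc\ge\frac14$.

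\textbf{Upper bound.} I need $\pi(\Dcal(P_5))\le\frac14$: every digraph $H$ with $\dens(H)>\frac14$ must receive some orientation. The approach is to find a small target that is forced by density above $\frac14$ and into which an orientation maps.

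First, a loop at $v$ receives everything, since we can map all of $P_5$ to $v$. So assume $H$ is loopless.

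Next, suppose $H$ contains a directed path $x\to y\to z$. Take the order $1\prec2\prec3$. Then the arcs of $P_5$ are $v_1\to v_2$, $v_2\to v_3$, $v_4\to v_3$ and $v_4\to v_5$. Map $v_1\mapsto x$, $v_2\mapsto y$, $v_3\mapsto z$, $v_4\mapsto y$ and $v_5\mapsto z$; we need $y\to z$, which holds, so this is a homomorphism. Note that $z=x$ is allowed here, so a 2-cycle also counts as such a path.

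If $H$ has no directed path of length $2$, then every vertex is a pure source or a pure sink. Then $H$ is a bipartite orientation from sources to sinks, so $e(H)\le st$ where $s+t\le|V(H)|$, and therefore $\dens(H)\le\frac14$.

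So above $\frac14$ an orientation always maps in, which gives $\pc(P_5)=\frac14$. The main obstacle was identifying the right target, the path $x\to y\to z$, so that the upper bound reduces to this elementary bipartite estimate.
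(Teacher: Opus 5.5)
Your proof is correct and follows essentially the same route as the paper. You characterize $\pc(F)=0$ by an orientation mapping to the one-arc digraph, via Lemma~\ref{lem:half-blowup-test} and Theorem~\ref{thm:digraph}; for $P_5$ you observe that some color must be the middle one, which you phrase as all three colors needing to be extreme; and the upper bound uses a vertex with both in- and out-arcs, which must exist above density $\frac14$. The only difference is cosmetic: you get the lower bound $\frac14$ from the host $\vec K_2$ through Theorem~\ref{thm:digraph}, whereas the paper uses the $\frac14$-density of half-graphs directly, and these amount to the same argument.
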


The chromatic number no longer determines the answer, but it still gives sharp bounds on its possible range.

\begin{corollary}\label{cor:chromatic}
Let $F$ be a colored graph with $\chi(F)\ge2$.  Then
\[
  1-\frac{1}{\chi(F)-1}
  \le \pc(F)
  <1-\frac{1}{2(\chi(F)-1)}.
\]
For every integer $\ell\ge2$, the upper endpoint $1-1/(2(\ell-1))$ is an accumulation point of colored Tur\'an densities of graphs with chromatic number $\ell$.
\end{corollary}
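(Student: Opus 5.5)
The lower bound is immediate once we compare with a single orientation. By Theorem~\ref{thm:digraph}, $\pc(F)=\pi(\Dcal(F))$. Let $\ell=\chi(F)$. The complete bidirected loopless digraph $K$ on $\ell-1$ vertices is loopless, so any homomorphism from $F^{\preceq}$ into $K$ would give a proper $(\ell-1)$-coloring of $F$. Hence no member of $\Dcal(F)$ maps into $K$. Its balanced blowups are also avoided, and their densities tend to $\lambda(K)=1-\frac{1}{\ell-1}$. This gives the lower bound. (Alternatively, use the ordinary $(\ell-1)$-partite construction directly.)

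For the strict upper bound I would use the value-set structure from Theorem~\ref{thm:value-set}. The density $\pc(F)$ equals $\lambda(D)$ for some finite loopless digraph $D$ avoiding all of $\Dcal(F)$; finiteness comes from the acyclicity argument, which is what makes the supremum attained. The key observation is that if $D$ contains a semicomplete subdigraph on $\ell$ vertices, that subdigraph contains a spanning acyclic tournament, i.e. a transitive tournament $T_\ell$. Take a proper $\ell$-coloring of $F$, merge the prescribed colors into these $\ell$ classes, and order the classes by the transitive order. Then some orientation $F^{\preceq}$ maps into $T_\ell$: choose $\preceq$ to refine the order of the $\ell$ classes, so that each edge goes from a lower class to a higher one. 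Hence the underlying undirected simple graph of $D$, where $u\sim v$ if at least one of $uv,vu$ is an arc, is $K_\ell$-free.

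Now bound $\lambda(D)$. Write $\sum_{uv\in E(D)}x_ux_v\le \sum_{\{u,v\}\text{ adjacent}}2x_ux_v\cdot\frac{a_{uv}}{2}$, where $a_{uv}\in\{1,2\}$ counts the arcs on the pair. Let $H$ be the underlying graph and $B$ the set of bidirected pairs. Then
\[
\lambda(D)=\sum_{uv\in H}x_ux_v+\sum_{uv\in B}x_ux_v.
\]
Because $B$ is $K_\ell$-free, Motzkin--Straus gives $\sum_{B}x_ux_v\le\frac12(1-\frac1{\ell-1})$. Because $H$ is $K_{\ell+1}$-free, $\sum_H x_ux_v\le \frac12(1-\frac1{\ell})$. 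This bound is too weak, and this is where I expect the main obstacle. The better route is to treat $B\subseteq H$ together: $H$ itself is $K_\ell$-free, so $\sum_H\le\frac12(1-\frac1{\ell-1})$ and likewise $\sum_B\le$ the same. The sum is therefore at most $1-\frac1{\ell-1}<1-\frac1{2(\ell-1)}$. That seems too good, since it would contradict the lower bound being sharp only sometimes.

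The catch is that $F$'s fixed coloring may force merged classes with equal colors into different classes. The merging step above is invalid, because colored homomorphisms must send distinct colors to distinct classes, while vertices of the same color may map anywhere consistent. So the correct claim is only that $D$ contains no transitive tournament on $N$ vertices for $N$ depending on $F$. The true argument should instead be the following. The bidirected part $B$ must be $K_\ell$-free: the bidirected $K_\ell$ admits every orientation, and $F$ maps properly into $K_\ell$ since the color classes can be split into the $\ell$ parts compatibly with any order. The one-directional part contributes at most $\frac12$ of the pair weight. Then
\[
\lambda(D)\le \sum_{B}x_ux_v+\tfrac12\Bigl(\sum_{u\ne v}x_ux_v-\sum_Bx_ux_v\Bigr)\cdot 2\cdot\tfrac12 ,
\]
which gives $\lambda\le\frac12\sum_B 2x_ux_v\cdot\frac12+\frac12(1-\sum x_v^2)$. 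Optimizing with Motzkin--Straus on $B$ should yield $\le \frac12+\frac12(1-\frac1{\ell-1})=1-\frac1{2(\ell-1)}$. Strictness follows since $\sum x_v^2>0$ and finiteness of $D$ means the maximum is attained.

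For the accumulation statement, I would take $F$ whose orientations all require long transitive structures: for example, $\ell$-chromatic graphs with many colors so that $D$ can be a blowup of bidirected $K_{\ell-1}$ with large transitive tournaments inside parts. Its Lagrangian tends to $\frac12(1-\frac1{\ell-1})+\frac12=1-\frac1{2(\ell-1)}$.
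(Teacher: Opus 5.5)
Your lower bound is the paper's, and the upper-bound argument you finally settle on is correct once the two abandoned attempts are deleted. The merging step is indeed invalid, as the path with colors $1-2-3-1-2$ already shows. Take $D$ from Proposition~\ref{prop:acyclic-lagrangian} together with Theorem~\ref{thm:digraph}, not from Theorem~\ref{thm:value-set}. The proposition applies because $\chi(F)\ge2$ gives every orientation an arc, and you need $D$ loopless \emph{and} $\Dcal(F)$-free with $\lambda(D)=\pc(F)$. The bidirected pairs of $D$ form a $K_\ell$-free graph, because a proper $\ell$-coloring of the underlying graph of $F$ is a homomorphism from every orientation of $F$ into the bidirected $K_\ell$. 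Hence, for every weight vector $x$,
\[
  \sum_{uv\in E(D)}x_ux_v\le\frac12\Bigl(1-\sum_v x_v^2\Bigr)+\frac12\Bigl(1-\frac1{\ell-1}\Bigr)<1-\frac1{2(\ell-1)}.
\]
This single inequality should replace your displays, which have stray factors. It is the paper's argument in Lagrangian form. The paper applies Tur\'an's theorem to the bidirected pairs of the exact-density blowup from Lemma~\ref{lem:lag-rational}; you apply Motzkin--Straus to the weights and read off strictness from $\sum_v x_v^2>0$.

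The genuine gap is the accumulation statement, which your proposal does not prove. Your host is essentially the paper's $D_{\ell-1,t}$ on $[\ell-1]\times[t]$: all arcs between distinct parts and a transitive tournament inside each part, with density $1-\frac1{2(\ell-1)}-\frac1{2(\ell-1)t}$. But you never say which colored graphs $F$ you forbid, nor why no orientation of $F$ maps into this host, and that is the entire content of the claim. Being ``$\ell$-chromatic with many colors'' does not suffice: a rainbow $F$ can use arbitrarily many colors and still has $\pc(F)=\pi(F)=1-\frac1{\ell-1}$. What is missing is a coloring that defeats every placement by pigeonhole.

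The paper uses $F_{\ell,m}=K_\ell\times K_m$, colored by the second coordinate, with $m>((\ell-1)t)^\ell$. It has chromatic number $\ell$, and all its orientations are isomorphic because color permutations are automorphisms. Suppose $f:F_{\ell,m}^{\preceq}\to D_{\ell-1,t}$ existed.
\begin{itemize}
\item Two colors $j<j'$ would have the same image profile $(f(i,j))_{i\in[\ell]}$.
\item Two of the $\ell$ images would share a part, say $f(i,j)=(a,b)$ and $f(i',j)=(a,b')$ with $i\ne i'$, labelled so that $b\ge b'$.
\item The arc $(i,j)\to(i',j')$ would then have to map to $(a,b)\to(a,b')$, which is not an arc.
\end{itemize}
Together with your strict upper bound, these lower bounds give infinitely many distinct values below $1-\frac1{2(\ell-1)}$ converging to it.
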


Here $\chi(F)$ is the chromatic number of the underlying graph, not the number of prescribed colors.  In particular, $\pc(F_i)\to1$ holds exactly when $\chi(F_i)\to\infty$.

For ordinary graphs, every finite forbidden family is \emph{principal}: its Tur\'an density is the minimum of the Tur\'an densities of its members.  Non-principality is a familiar distinction of hypergraph Tur\'an theory~\cite{Balogh02,MubayiPikhurko08}.  It already occurs for graphs with prescribed vertex colors.

\begin{corollary}\label{cor:nonprincipal}
There exist colored graphs $F_1,F_2$ such that
  $\pc(F_1,F_2)<\min\{\pc(F_1),\pc(F_2)\}.$
\end{corollary}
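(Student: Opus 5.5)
We will use Theorem~\ref{thm:digraph} to turn the statement into a question about digraph homomorphism densities, and then construct a digraph family whose extremal densities cannot simply be obtained by taking the minimum over its members. By Theorem~\ref{thm:digraph},
\[
\pc(F_1,F_2)=\pi(\Dcal(F_1)\cup\Dcal(F_2)),\qquad \pc(F_i)=\pi(\Dcal(F_i)).
\]
It therefore suffices to find colored graphs $F_1,F_2$ with two properties. First, each of $\Dcal(F_1)$ and $\Dcal(F_2)$ is avoided by some dense loopless template. Second, every template avoiding both families has strictly smaller Lagrangian.

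The templates we have in mind are a single arc, with Lagrangian $\frac14$, and a directed or bidirected structure of larger density. The plan is as follows.

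(i) Let $F_1$ be the path $P_5$ with color sequence $1-2-3-1-2$. By Corollary~\ref{cor:jump}, $\pc(F_1)=\frac14$. More precisely, $F_1$ does not map into any half-graph, so every host avoiding $\Dcal(F_1)$ is a digraph that $P_5$ cannot enter. We need to identify the extremal such host: a bidirected edge $K_2^{\leftrightarrow}$, with Lagrangian $\frac12$, should already avoid $\Dcal(F_1)$ whenever $F_1$ is not bipartite in a color-consistent way. We will instead choose $F_1$ so that the bidirected edge avoids $\Dcal(F_1)$, giving $\pc(F_1)\ge\frac12$.

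(ii) Let $F_2$ be an odd-cycle-free graph whose orientations all map into $K_2^{\leftrightarrow}$, for instance a properly $2$-colored bipartite pattern. This forces every template avoiding $\Dcal(F_2)$ to be essentially acyclic on its non-loop part, for example a transitive tournament $T_r$. Its Lagrangian is $\frac12(1-\frac1r)$ when $F_2$ does not block transitive tournaments, so $\pc(F_2)$ is also large.

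(iii) Choose $F_1$ so that it kills every transitive tournament: take $F_1=$ the rainbow $K_3$. Then $\Dcal(K_3)$ consists of the transitive triangle, and $\pc(F_1)=\frac12$, attained by $K_2^{\leftrightarrow}$. Choose $F_2$ so that it kills every 2-cycle but no transitive tournament of moderate size. A candidate is $F_2=P_5$ with colors $1-2-1-2-1$ (more generally, an alternating colored path or cycle on two colors): with only two colors its orientations are an alternating path, which maps into any digraph with a $2$-cycle but not into a single arc. Ideally $\pi(\Dcal(F_2))$ is $\frac12(1-\frac1r)$ for some $r\ge3$, using a transitive tournament avoiding $F_2$ orientations. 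We will tune $F_2$ (for example, a colored graph all of whose orientations contain a directed path of length $2$ alternating in a zigzag) to achieve this.

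(iv) Show that a loopless digraph avoiding both families contains neither a $2$-cycle nor a transitive triangle, so it is an oriented graph whose tournament part is tiny. Its Lagrangian is at most $\frac14$, attained by a single arc, and $\frac14<\min\{\frac12,\frac13\}$.

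The main obstacle is step (iii): finding an explicit colored $F_2$ whose orientation family is blocked by some template of Lagrangian above $\frac14$, while every such template contains a $2$-cycle. If the tournament route fails, I would first try the other natural template, a directed triangle $\vec C_3$ (Lagrangian $\frac13$, no $2$-cycle, no transitive triangle). I would then pick $F_1$ to be blocked by $K_2^{\leftrightarrow}$ but to map into $\vec C_3$, and $F_2$ the reverse. Every common avoider must then lack both a $2$-cycle and a directed-triangle-compatible pattern, and we bound its Lagrangian by a direct quadratic optimization. For that bound we will use the Motzkin--Straus-type observation that an optimal weighting of an oriented graph is supported on a set spanning a tournament.
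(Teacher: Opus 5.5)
\textbf{There is a genuine gap: the proposal never produces a working pair, and the two concrete steps it relies on are false.} Your reduction through Theorem~\ref{thm:digraph} is the right first step.

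In step (iii), the path with colors $1-2-1-2-1$ cannot serve as $F_2$. With only two colors, every edge is oriented from one color class to the other, so the orientation maps into a single arc: send color-$1$ vertices to the tail and color-$2$ vertices to the head. Indeed, every $2$-colored graph has colored Tur\'an density $0$ by Corollary~\ref{cor:jump}.

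In step (iv), a loopless digraph with no $2$-cycle and no transitive triangle need not have Lagrangian at most $\frac14$. The directed triangle $\vec C_3$ is such a digraph and has Lagrangian $\frac13$. The correct bound is $\frac13$: the underlying graph is $K_4$-free, since every $4$-vertex tournament contains $T_3$, and Motzkin--Straus then applies. Relatedly, in step (ii), excluding $2$-cycles makes a host oriented, not acyclic.

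Once these are corrected, your structure in (iii)--(iv) becomes the paper's. Take $F_2$ a rainbow triangle and $F_1$ bipartite. Then the joint density is at most $\frac13$, with equality when $\vec C_3$ avoids $\Dcal(F_1)$. The real task, which you leave open, is to find a bipartite colored graph avoided by an oriented template of Lagrangian greater than $\frac13$, such as $T_4$ with density $\frac38$.

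The missing idea is the layered construction of Lemma~\ref{lem:layered} applied to $\overleftrightarrow K_2$. Take $F_1=\overleftrightarrow K_2[17]=K_2\times K_{17}$, with $a_i,b_i$ of color $i$ and edges $a_ib_j$ for $i\ne j$. It is bipartite. Suppose its orientation maps into a loopless oriented graph on at most $4$ vertices. By pigeonhole over the $16$ possible image pairs, two layers $i<j$ receive the same pair $(u,v)$. The arcs $a_ib_j$ and $b_ia_j$ then force both $uv$ and $vu$, which is impossible. Hence $T_4$ and $\vec C_3$ both avoid $\Dcal(F_1)$, giving $\pc(F_1,F_2)=\frac13<\frac38\le\pc(F_1)$ and $\pc(F_2)=\frac12$.

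Your fallback paragraph has two problems.
\begin{enumerate}
\item It needs the same ingredient: the ``reverse'' $F_2$ is exactly a bipartite graph blocked by $\vec C_3$, and you do not construct one.
\item ``No $2$-cycle and no $\vec C_3$'' does not bound the Lagrangian below $\frac13$. The tournaments $T_3$ and $T_4$ satisfy both conditions and have Lagrangians $\frac13$ and $\frac38$. You would also have to exclude $T_3$, for example by taking $F_1$ to be $C_9$ with periodic colors $1-2-3$. Its orientation maps to both $\vec C_3$ and $T_3$, so Mantel's theorem bounds the joint density by $\frac14$.
\end{enumerate}
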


Our proof gives an explicit example: $F_1=K_2\times K_{17}$, colored by its second coordinate, and a rainbow triangle $F_2$.  Their joint density is $\frac13$, whereas $\pc(F_1)\ge\frac38$ and $\pc(F_2)=\frac12$.

\subsection{Low-resolution sparsification}

The main technical ingredient is a sampling theorem for a prescribed partition.  It does not assume that the graph avoids any configuration, or even that its pair densities are large.  Given multisets $A,B$ in distinct color classes, let $d(A,B)$ be the proportion of adjacent pairs in $A\times B$, counting occurrences separately.

\begin{lemma}[Low-resolution sparsification]\label{lem:lowres}
For every $\varepsilon>0$ there exists $s=s(\varepsilon)$ such that, for every positive integer $m$, there exists $k=k(\varepsilon,m)$ with the following property.  Every $k$-colored graph with classes $V_1,\dots,V_k$ has a set $I\subseteq[k]$ of size $m$ and multisets $S_i$ of $s$ vertices from $V_i$, for $i\in I$, such that
\[
  |d(S_i,S_j)-d(V_i,V_j)|\le\varepsilon
  \qquad\text{for all distinct }i,j\in I.
\]
For $0<\varepsilon\le1$, one may take $s=O(\varepsilon^{-3}\log(2/\varepsilon))$.
\end{lemma}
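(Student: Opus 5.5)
The plan is a probabilistic construction in which each $S_i$ is an i.i.d.\ sample of $s$ vertices from $V_i$, followed by a Ramsey-type selection of $I$. A plain union bound over all pairs in $I$ cannot work, because it would force $s$ to grow with $m$. For a fixed pair $i\ne j$ we split the error as
\[
d(S_i,S_j)-d(V_i,V_j)=\bigl[d(S_i,S_j)-d(S_i,V_j)\bigr]+\bigl[d(S_i,V_j)-d(V_i,V_j)\bigr].
\]
The second bracket is the average over $u\in S_i$ of the fixed function $u\mapsto d(u,V_j)$. By Hoeffding it exceeds $\varepsilon/2$ with probability at most $\delta=2e^{-\varepsilon^2 s/2}$. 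The first bracket is the average over $v\in S_j$ of $f_{S_i}(v)=d(S_i,\{v\})$, which for fixed $S_i$ is again a bounded function on $V_j$, so it satisfies the same tail bound conditionally on $S_i$. Hence every pair is bad with probability at most $2\delta$. The whole difficulty is to upgrade this per-pair estimate to an $m$-set of classes that are pairwise good, with $\delta$ fixed and independent of $m$.

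To do this, I will discretize at resolution $\varepsilon/4$. Each vertex $v\in V_j$ gets a \emph{low-resolution type} relative to a previously chosen sample $S_i$, namely $f_{S_i}(v)$ rounded to a multiple of $\varepsilon/4$. This type takes only $O(1/\varepsilon)$ values. I will then build $I$ and the samples sequentially. Maintain a large candidate set $C\subseteq[k]$. Pick the next index $i\in C$ and choose $S_i$ by averaging (not by a union bound) so that the second bracket is small for all but a $2\delta$-fraction of $j\in C$. Discard the exceptional $j$. Next, partition the surviving $j\in C$ according to the rounded histogram of $d(u,V_j)$ over $u\in S_i$, and keep the most popular part. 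Since each part is determined by $O(s/\varepsilon)$ bits, this costs only a factor depending on $\varepsilon$ per step. After $m$ steps the total loss is a function of $\varepsilon$ and $m$ alone, which determines $k(\varepsilon,m)$.

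Along the way, each later class $j$ collects at most $m$ type functions $f_{S_{i_1}},\dots,f_{S_{i_t}}$ from earlier samples, and $S_j$ must approximate the mean of all of them simultaneously. I would handle this with a reversed greedy order. The samples $S_j$ for later classes are drawn first, independently. Whenever $S_i$ is fixed, we delete the classes $j$ whose already-drawn $S_j$ is bad for $f_{S_i}$. By the conditional Hoeffding bound and averaging over the independent $S_j$, this removes only a $2\delta$-fraction in expectation. Taking $\delta\le 1/4$ means a constant fraction of $C$ survives each step, and that suffices once $k\ge 4^m$ or so. With $s=C\varepsilon^{-2}\log(1/\varepsilon)$ this already gives $\delta$ small. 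The stated $O(\varepsilon^{-3}\log(2/\varepsilon))$ leaves an extra factor $\varepsilon^{-1}$ of room for the rounding and histogram step.

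I expect the main obstacle to be making this ordering consistent. The error in pair $(i,j)$ depends on both samples, so the first bracket has to be controlled for every pair in $I$ and not merely on average. The crucial point to verify is that the deletions at step $i$ depend only on $S_i$ and on the already-drawn samples, so that each surviving pair $(i,j)$ really is good in both brackets. If this conditioning argument breaks down, my fallback is to apply the multicolour Ramsey theorem to the edge-colouring of $\binom{[k]}{2}$ by the rounded pair $\bigl(d(S_i,S_j),d(V_i,V_j)\bigr)$. I would then show, using the exchangeability of the independent samples, that a homogeneous set of size $m$ cannot consist entirely of bad pairs when the bad probability is below $1/4$.
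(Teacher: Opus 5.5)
There is a genuine gap, and it sits at the step you flag yourself.

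\textbf{What works.} Your second bracket is harmless. It tests $S_i$ alone against a function indexed by $j$, so choosing $S_i$ and discarding the exceptional $j$ is fine.

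\textbf{The failing step.} The first bracket tests $S_j$ against the random function $f_{S_i}$, so the pair event depends on both samples. Here the reversed greedy order is not justified. After earlier deletions, the surviving $S_j$ (and the pre-drawn $S_i$ itself) are conditioned on having passed all earlier tests. The committed samples are conditioned on being pairwise good. So the claim that each step removes only a $2\delta$-fraction in expectation no longer follows from Hoeffding.

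\textbf{Why it cannot be patched with what you use.} Your argument uses only two facts: each pair event is determined by $(S_i,S_j)$, and it has probability at most $2\delta$, even conditionally on either sample. No argument using only these can succeed. Give each class an independent uniform label $x_i$ on a circle of length $1$, and call a pair bad when $|x_i-x_j|<c$. Every pair is then bad with probability $2c$ under either conditioning. Yet no more than $1/c$ classes are pairwise good, however large $k$ is: selection pushes the chosen labels apart, and their deleted arcs soon cover the circle. With $c$ of order $\delta=2e^{-\varepsilon^2 s/2}$ this caps $m$ at about $1/\delta$. That forces $s\gtrsim\varepsilon^{-2}\log m$, exactly the union-bound rate the lemma is meant to beat.

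\textbf{The Ramsey fallback.} This does not help either. The homogeneous set is chosen after the samples are revealed, so exchangeability says nothing about it, and it can genuinely consist of bad pairs. For instance, suppose that for each $i<j$ the graph joins a fixed half of $V_i$ completely to $V_j$ and has no other edges. Then the classes whose samples are atypical on that half form a bad clique of size linear in $k$, and Ramsey may simply return it.

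\textbf{The histogram step.} This concerns only degrees of $S_i$ into $V_j$. It does not touch the first bracket.

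\textbf{The missing idea.} Never demand control of a pair in which both samples are fresh. The paper samples in $r=\lceil 4/\varepsilon\rceil$ rounds of $t=O(\varepsilon^{-2}\log(2/\varepsilon))$ vertices per class. In round $q+1$, the new block of class $i$ is tested only against data already fixed: $V_j$ and the earlier blocks $S_j^p$ with $p\le q$. Goodness with respect to $j$ is therefore a one-sided event of probability greater than $3/4$. A hypergraph Ramsey step then finds $u$ classes, each carrying a block good with respect to all the others, with no union bound over $u$. It colours each $u$-set by the first position having no block good for the whole set, and uses the middle element of a homogeneous $(4u+1)$-set. Pairs of blocks from the same round are simply left uncontrolled. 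They form a $1/r$ fraction of each final pair, and each sampled vertex meets only one such block, which Lemma~\ref{lem:disc-comb} absorbs. This, rather than rounding, is also the source of the extra factor $\varepsilon^{-1}$ in $s=rt$.
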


The important feature is the order of the parameters: the sample size depends only on the error, not on the number of retained classes.  Independent sampling and a union bound preserve all pairs of $m$ specified classes with $O(\varepsilon^{-2}\log m)$ vertices per class.  Lemma~\ref{lem:lowres} removes the dependence on $m$ by allowing the classes themselves to be selected from a larger family.  Once the samples have bounded size, only finitely many bipartite adjacency matrices remain, and Ramsey's theorem finds $m$ classes with a common matrix.  This is the directed template in Theorem~\ref{thm:blowup}.

The proof adapts the low-resolution method used by Lamaison~\cite{Lam24} to obtain palette descriptions of uniform hypergraph Tur\'an density.  Its basic idea is to sample in several rounds and control interactions between different rounds.  When a new block is selected, the earlier blocks are fixed, so the necessary concentration estimates involve only the new sample.  Ramsey's theorem then selects classes on which these estimates hold simultaneously.  Interactions within one round are left uncontrolled, but contribute only a small fraction of the final sample.

We prove a stronger form, Theorem~\ref{thm:technical}, preserving both degree distributions of every selected bipartite pair and the relative degrees of each sampled vertex.  It therefore handles density and minimum-degree assumptions in the same argument.  We expect this sparsification theorem to be useful independently whenever pairwise data on a prescribed partition must be retained with bounded samples.  Its conclusion is different from regularity: it controls every pair in a selected subfamily of the original classes, rather than almost every pair of a newly constructed partition.  It does not assert control of arbitrary configurations spanning three or more classes.

\subsection{Minimum degree}

The stronger sparsification theorem also yields a local version of the directed reduction.  In this setting density and minimum degree behave differently: the gap above zero disappears.

A $k$-colored graph with classes $V_1,\dots,V_k$ has \emph{degree at least $\delta$} if every vertex outside $V_i$ has at least $\delta|V_i|$ neighbors in $V_i$, for every $i$. If both the minimum in-degree and the minimum out-degree of $D$ are at least $\delta|V(D)|$, then $D[k]$ has degree at least $\delta$.

\begin{theorem}\label{thm:blowdeg}
For every $\varepsilon>0$ there exists $s=s(\varepsilon)$ such that, for every positive integer $m$, there exists $k=k(\varepsilon,m)\ge m$ with the following property.  Every $k$-colored graph $G$ of degree at least $\delta$ admits a colored homomorphism from $D[m]$ for some possibly looped $s$-vertex digraph $D$ whose minimum in-degree and minimum out-degree are both at least $(\delta-\varepsilon)s$.
\end{theorem}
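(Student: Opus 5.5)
We will derive Theorem~\ref{thm:blowdeg} from the stronger sparsification theorem (Theorem~\ref{thm:technical}), in the same way that Theorem~\ref{thm:blowup} follows from Lemma~\ref{lem:lowres}, but using the relative-degree control of each sampled vertex rather than the pair densities.

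Given $\varepsilon>0$, we apply Theorem~\ref{thm:technical} with error $\varepsilon/2$. This yields a sample size $s=s(\varepsilon)$ independent of $m$. For the number of retained classes we take $M=R(m)$, where $R(m)$ is a Ramsey number chosen below, and we let $k=k(\varepsilon/2,M)$. We obtain $I\subseteq[k]$ with $|I|=M$ and multisets $S_i=(v_{i,1},\dots,v_{i,s})\subseteq V_i$ for $i\in I$. The property we need is the following. For distinct $i,j\in I$, every sampled vertex $v_{i,a}$ satisfies
\[\frac{|\{b: v_{i,a}v_{j,b}\in E(G)\}|}{s}\ \ge\ \frac{|N(v_{i,a})\cap V_j|}{|V_j|}-\frac{\varepsilon}{2}\ \ge\ \delta-\frac{\varepsilon}{2}.\]
This is the statement that the relative degree of each sampled vertex into each other sampled class is preserved up to the error. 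Should Theorem~\ref{thm:technical} only give this for all but an $\varepsilon/2$-fraction of the indices $a$, we would instead discard or repeat a few sample entries; the loss then goes into the remaining $\varepsilon/2$ of the budget.

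Next we colour each pair $i<j$ of $I$ (in the natural order) by the $s\times s$ matrix $A_{ij}(a,b)=\mathbf 1[v_{i,a}v_{j,b}\in E(G)]$. There are at most $2^{s^2}$ colours, so Ramsey's theorem gives $J\subseteq I$ with $|J|=m$ on which $A_{ij}=A$ for a single matrix $A$. Let $D$ be the digraph on $[s]$ with arc $ab$ exactly when $A(a,b)=1$; loops are allowed. The map sending the copy of $a$ in the $t$-th class of $D[m]$ to $v_{j_t,a}$, where $j_1<\dots<j_m$ enumerate $J$, is then a colored homomorphism. Edges of $D[m]$ between classes $t<t'$ correspond to arcs $ab$ of $D$, hence to edges $v_{j_t,a}v_{j_{t'},b}$. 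Distinct classes go to distinct host classes, and each class goes into a single host class. Multiset repetitions cause no problem because we only need a homomorphism.

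For the degree bound, fix $a\in[s]$ and take any $i<j$ in $J$. The out-degree of $a$ in $D$ equals the number of $b$ with $v_{i,a}v_{j,b}\in E$, which is at least $(\delta-\varepsilon)s$ by the displayed bound. The in-degree of $b$ equals $|\{a: v_{i,a}v_{j,b}\in E\}|$. This is the relative degree of $v_{j,b}$ into $S_i$, so the same bound applied with the roles of $i$ and $j$ exchanged gives at least $(\delta-\varepsilon)s$. It is important that the control holds in both directions of each ordered pair. Because the common matrix $A$ is realized by an actual pair $(i,j)$, both bounds transfer to $D$.

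The main obstacle is making sure that Theorem~\ref{thm:technical} really controls the degree of \emph{every} sampled vertex, both into earlier-round samples and into later-round samples, not just on average. If it only gives this on average, we will take $s$ to be a multiple of the sample size and treat the bad indices as follows. Call an index bad if it fails for some pair. Ramsey homogeneity makes the set of bad indices the same for all pairs in $J$, so we can replace each bad entry by a copy of a good entry of the same class. This preserves the homomorphism and costs at most the allotted $\varepsilon/2$ in degree.
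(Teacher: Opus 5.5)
Your proposal follows the paper's own proof: apply Theorem~\ref{thm:technical}, color each pair $i<j$ of selected classes by its $s\times s$ adjacency matrix, pass to a Ramsey-homogeneous subfamily, and read the out- and in-degrees of $D$ off the row and column sums of the common matrix. The degree bounds come from the pointwise conditions of a discretization, applied in both directions. Two small repairs are needed.

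\emph{Class sizes.} Theorem~\ref{thm:technical} requires every class to have at least $n(\varepsilon)$ vertices, but Theorem~\ref{thm:blowdeg} allows classes of any size, even size one, so you apply the theorem outside its hypotheses. The paper first replaces each vertex of a small class by equally many twins. This preserves degree at least $\delta$, and the enlarged graph projects coloredly onto $G$, so $D[m]\to G$ follows by composition. Your use of multisets suggests you had this projection in mind, but the step must be stated.

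\emph{The case $m=1$.} Reading degrees of $D$ from the common matrix needs at least one pair $i<j$ in $J$. With $|J|=m=1$ there is no pair and no matrix. Take $|J|=\max\{m,2\}$ and restrict to $m$ layers at the end.

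Your fallback for ``bad'' sampled vertices is unnecessary. By definition, an $\varepsilon$-discretization controls $d(v,T)$ for every occurrence $v\in S$ and $d(w,S)$ for every occurrence $w\in T$. So you may run Theorem~\ref{thm:technical} with error $\varepsilon$ itself, as the paper does.
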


For a non-empty finite colored graph family $\Fcal$, let $\pcd(\Fcal)$ be the supremum of those $\delta\in[0,1]$ for which, for every $k$, there is a $k$-colored graph of degree at least $\delta$ receiving no colored homomorphism from any member of $\Fcal$.
For a digraph family $\Hcal$, let $\pi^\delta(\Hcal)$ be the supremum of the normalized minimum semidegrees
$\min\{\delta^+(D),\delta^-(D)\}/|V(D)|$
over non-empty finite digraphs $D$ receiving no homomorphism from any member of $\Hcal$.  In both definitions an empty feasible set has value zero, and singleton families are abbreviated as before.

\begin{theorem}\label{thm:degdigraph}
For every non-empty finite colored graph family $\Fcal$,
$\pcd(\Fcal)=\pi^\delta(\Dcal(\Fcal))$.
\end{theorem}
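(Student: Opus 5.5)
We prove the two inequalities separately. The key dictionary, already noted before Theorem~\ref{thm:digraph}, is this: a colored graph $F$ with at most $m$ colors maps coloredly into $D[m]$ if and only if some orientation $F^{\preceq}\in\Dcal(F)$ maps homomorphically into $D$. For a general $m$, we only need $m$ to be at least the number of colors of $F$, since unused classes of $D[m]$ are harmless.

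\emph{Lower bound.} Let $D$ be a non-empty finite digraph that receives no homomorphism from any member of $\Dcal(\Fcal)$, and let $\delta_D=\min\{\delta^+(D),\delta^-(D)\}/|V(D)|$. For every $k$, consider the half-blowup $D[k]$. As remarked before Theorem~\ref{thm:blowdeg}, it has degree at least $\delta_D$. If some $F\in\Fcal$ mapped coloredly into $D[k]$, then $F$ uses at most $k$ colors, and the order of the image classes would give an orientation $F^{\preceq}$ mapping into $D$, which is impossible. Hence $\pcd(\Fcal)\ge\delta_D$. Taking the supremum gives $\pcd(\Fcal)\ge\pi^\delta(\Dcal(\Fcal))$.

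Two small points need checking here. First, if no such $D$ exists, the right-hand side is $0$ and the inequality is trivial. Second, loops in $D$ are allowed in the definition of $\pi^\delta$. They cause no trouble, because a loop at $u$ just makes $u_i$ adjacent to $u_j$ for $i<j$, and colors are still proper since edges only join distinct classes.

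\emph{Upper bound.} Suppose $\delta<\pcd(\Fcal)$ is witnessed. Fix $\varepsilon>0$ and let $m$ be the maximum number of colors in a member of $\Fcal$. Take $k=k(\varepsilon,m)$ from Theorem~\ref{thm:blowdeg}, and let $G$ be a $k$-colored graph of degree at least $\delta$ that receives no colored homomorphism from $\Fcal$. Theorem~\ref{thm:blowdeg} gives an $s$-vertex digraph $D$ with minimum semidegree at least $(\delta-\varepsilon)s$, together with a colored homomorphism $\psi:D[m]\to G$.

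If some $F^{\preceq}\in\Dcal(\Fcal)$ mapped into $D$, then $F$ would map coloredly into $D[m]$, placing colors in $\preceq$-order on distinct classes. Composing with $\psi$ would then give a colored homomorphism $F\to G$. We must check that distinctness and equality of colors are preserved: $\psi$ is colored, and the map $F\to D[m]$ sends equal colors to the same class and distinct colors to distinct classes. So no such $F^{\preceq}$ maps into $D$, and therefore $\pi^\delta(\Dcal(\Fcal))\ge\delta-\varepsilon$. Letting $\varepsilon\to0$ and $\delta\to\pcd(\Fcal)$ gives the reverse inequality.

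\emph{Remaining details.} The supremum over $\delta$ is handled by taking $\delta$ arbitrarily close to $\pcd(\Fcal)$ with a witness for the required $k$. We also need the case $\delta-\varepsilon\le0$, which is trivial. Finally, if $\Fcal$ contains an edgeless member, then both sides are $0$: every nonempty $D$ receives a homomorphism from an edgeless digraph, and every $G$ with at least as many classes receives one from an edgeless colored graph.

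The only substantive ingredient is Theorem~\ref{thm:blowdeg}, which we assume. The main care point is the dictionary step with the convention that $D[m]$ requires $m$ at least the number of colors, handled by taking $m$ as the maximum over $\Fcal$.
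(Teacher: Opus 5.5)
Your proposal is correct and follows essentially the same route as the paper. Half-blowups $D[k]$ of $\Dcal(\Fcal)$-free digraphs, combined with Lemma~\ref{lem:half-blowup-test}, give the lower bound; Theorem~\ref{thm:blowdeg}, applied with $m$ equal to the maximum number of colors in a member of $\Fcal$, gives the upper bound; and the edgeless case is handled by the zero conventions. The remaining differences are cosmetic: you argue the upper bound directly rather than by contradiction, and you take the supremum over all feasible $D$ rather than over an $\eta$-near-optimal one.
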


\begin{theorem}\label{thm:nojump}
For every $\varepsilon>0$ there exists a colored graph $F$ such that
$0<\pcd(F)<\varepsilon$.
\end{theorem}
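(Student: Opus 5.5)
The plan is to work entirely on the directed side via Theorem~\ref{thm:degdigraph}, which gives $\pcd(F)=\pi^\delta(\Dcal(F))$. I will build $F$ as a long colored cycle whose orientations all wind around the cycle by the same nonzero amount. The lower bound will then come from a long directed cycle used as a host. The upper bound will come from the fact that large minimum semidegree forces a short directed cycle.

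\emph{Construction.} Fix $\varepsilon>0$ and put $T=\lceil 4/\varepsilon\rceil$ and $M=\operatorname{lcm}(1,2,\dots,T)$. Let $F$ be the cycle $v_0v_1\cdots v_{3M-1}v_0$, where $v_i$ gets color $(i\bmod 3)+1$. This is a proper $3$-coloring, so $F$ is a colored graph.

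For an orientation $F^{\preceq}\in\Dcal(F)$, define the winding of the cycle as the number of edges traversed forward along $v_0\to v_1\to\cdots$ minus the number traversed backward. Any linear order of $\{1,2,3\}$ makes the cyclic color sequence $1,2,3$ have either two ascents and one descent, or one ascent and two descents. Hence every period contributes the same value $\pm1$, and every orientation has winding exactly $\pm M$. Since the underlying graph is a single cycle, $F^{\preceq}$ maps homomorphically to the directed cycle $\vec C_r$ if and only if its winding is divisible by $r$, that is, if and only if $r\mid M$.

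\emph{Lower bound.} Choose a prime $r>T$. Then $r\nmid M$, so no member of $\Dcal(F)$ maps to $\vec C_r$. The digraph $\vec C_r$ is loopless, and its normalized minimum semidegree is $1/r>0$. Hence $\pcd(F)=\pi^\delta(\Dcal(F))\ge 1/r>0$.

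\emph{Upper bound.} Let $D$ be a finite digraph on $n$ vertices with $\min\{\delta^+(D),\delta^-(D)\}\ge (\varepsilon/2) n$. We show that some orientation of $F$ maps into $D$, which gives $\pi^\delta(\Dcal(F))\le\varepsilon/2<\varepsilon$.
\begin{itemize}
\item If $D$ has a loop, every digraph maps to that loop and we are done.
\item Otherwise, the Chv\'atal--Szemer\'edi bound on the directed girth shows that a digraph with minimum out-degree $\delta^+$ has a directed cycle of length at most $2n/(\delta^++1)+O(1)$. This gives a directed cycle of length $\ell\le T$, after enlarging $T$ by an absolute constant if needed.
\item Since $\ell\mid M$, the orientation for $1\prec2\prec3$ (winding $M$) maps onto $\vec C_\ell$ by sending $v_i$ to its accumulated winding modulo $\ell$. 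Composing with the embedding of $\vec C_\ell$ into $D$ gives the required homomorphism.
\end{itemize}

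The only external ingredient, and the step I expect to need the most care, is the short-cycle bound. Its constant matters only through the choice of $T$, and a cruder bound of the form $\ell\le C/\varepsilon$ with any absolute $C$ also suffices. If citing Chv\'atal--Szemer\'edi is undesirable, I would first try a self-contained argument that uses both semidegree conditions. I would grow out-neighbourhood layers from a vertex; each new layer either closes a cycle or adds at least $(\varepsilon/2)n$ fresh vertices, so this should produce a short closed walk. The winding computation and the homomorphism to $\vec C_\ell$ are routine.
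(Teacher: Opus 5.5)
Your proof is correct and follows essentially the paper's route: a $1$-$2$-$3$ periodically colored cycle whose length is divisible by all small integers ($\operatorname{lcm}(1,\dots,T)$ for you, $k!$ in the paper) maps to $\vec C_\ell$ exactly under a divisibility condition. A prime-length directed cycle then gives the positive lower bound, and a directed-girth bound gives the upper bound. The differences are cosmetic: your winding count replaces the paper's observation that all six orientations are isomorphic, and citing Chv\'atal--Szemer\'edi's $\lceil 2n/(r+1)\rceil$ for general loopless digraphs, rather than Shen's bound for oriented graphs, removes the need to exclude digons separately.
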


The contrast with Corollary~\ref{cor:jump} reflects a basic directed distinction.  Positive edge density exposes a single arc; positive minimum semidegree forces a directed cycle, but its length need not stay bounded as the semidegree tends to zero.  Periodically colored cycles make this distinction explicit, connecting the minimum-degree problem to directed girth and the Caccetta--H\"aggkvist conjecture.

The paper is organized as follows.  Section~\ref{sec:tools} collects the preliminary tools.  Section~\ref{sec:method} states the stronger sparsification theorem and the finite-support and obstruction lemmas, whose proofs appear in Section~\ref{sec:method-proofs}.  Section~\ref{sec:main-proofs} proves the density results, including the direct realization theorem.  Section~\ref{sec:min-degree-proofs} treats minimum degree, and Section~\ref{sec:remarks} discusses the remaining questions.

\section{Preliminaries and tools}\label{sec:tools}

We collect the probabilistic, homomorphism and Lagrangian facts used below.  The well-ordering theorem is needed only for Property~\ref{property:no-right}.
We need the Chernoff--Hoeffding inequality.
\begin{lemma}[\cite{Hoe63}]\label{lem:hoeffding}
Let $X_1,X_2,\dots,X_t$ be independent Bernoulli random variables, each taking the value $1$ with probability $p$. Then
  $\Pr\left(\left|\frac{1}{t}\sum_{i=1}^t X_i-p\right|>\varepsilon\right)
  \le 2e^{-2\varepsilon^2t}.$
\end{lemma}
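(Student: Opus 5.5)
This is the classical Hoeffding bound, and I would prove it by the exponential moment (Chernoff) method. I treat the upper and lower tails separately and then add them. Write $S=\sum_{i=1}^t X_i$ and $Y_i=X_i-p$. The $Y_i$ are independent, have mean zero, and take values in $[-p,1-p]$, an interval of length $1$.

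For the upper tail, fix $\lambda>0$. Markov's inequality applied to $e^{\lambda\sum_i Y_i}$ gives
\[
  \Pr\Bigl(\tfrac1t S-p>\varepsilon\Bigr)\le e^{-\lambda\varepsilon t}\,\mathbb E\, e^{\lambda\sum_i Y_i}=e^{-\lambda\varepsilon t}\prod_{i=1}^t \mathbb E\, e^{\lambda Y_i},
\]
where the last equality uses independence. The key input is Hoeffding's lemma: a mean-zero random variable $Y$ supported in an interval $[a,b]$ satisfies $\mathbb E\,e^{\lambda Y}\le e^{\lambda^2(b-a)^2/8}$.

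To prove Hoeffding's lemma, I would first use convexity of $y\mapsto e^{\lambda y}$ on $[a,b]$. This bounds $\mathbb E\,e^{\lambda Y}$ by the value of the chord at the mean, which equals $e^{L(h)}$ with $h=\lambda(b-a)$ and
\[
  L(h)=-h\theta+\log\bigl(1-\theta+\theta e^{h}\bigr),\qquad \theta=-a/(b-a)\in[0,1].
\]
Next I check that $L(0)=L'(0)=0$. The second derivative $L''(h)$ equals $q(1-q)$ for some $q\in[0,1]$, so $L''(h)\le \tfrac14$. Taylor's theorem then gives $L(h)\le h^2/8$. In our Bernoulli case $a=-p$ and $b=1-p$, so $\theta=p$ and the computation is fully explicit.

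Combining, $\Pr(\tfrac1t S-p>\varepsilon)\le \exp(-\lambda\varepsilon t+\lambda^2 t/8)$. The exponent is minimised at $\lambda=4\varepsilon$, which gives the bound $e^{-2\varepsilon^2 t}$. Applying the same argument to the variables $-Y_i$, which also lie in an interval of length $1$, gives $\Pr(\tfrac1t S-p<-\varepsilon)\le e^{-2\varepsilon^2 t}$. A union bound over the two tails yields the stated factor $2$.

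The only step that needs genuine care is Hoeffding's lemma, specifically the uniform bound $L''\le\tfrac14$. I would carry it out by writing $L''(h)$ in the form $q(1-q)$ with
\[
  q=\frac{\theta e^{h}}{1-\theta+\theta e^{h}}\in[0,1],
\]
and then using $q(1-q)\le\tfrac14$. Everything else in the argument is routine.
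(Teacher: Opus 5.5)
Your proof is correct. It is the standard exponential-moment argument via Hoeffding's lemma: convexity gives the bound $e^{L(h)}$, then $L(0)=L'(0)=0$ and $L''=q(1-q)\le\frac14$, you optimize at $\lambda=4\varepsilon$, and a union bound handles the two tails. This is essentially Hoeffding's own argument for bounded summands, and the paper gives no proof of its own, simply citing \cite{Hoe63}. The only implicit assumption is $\varepsilon>0$, which makes $\lambda=4\varepsilon$ positive and is how the lemma is used throughout the paper.
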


We will also use Ramsey's theorem. It states that for all integers $k,n,r$ there exists $N=R_r(n,k)$ such that, whenever the edges of the complete $r$-uniform hypergraph on $N$ vertices are colored in $k$ colors, there exists a subset of $n$ vertices in which all edges have the same color. If the subindex $r$ is not present, we assume $r=2$.

\subsection{Half-blowup and directed homomorphism}

The next lemma relates colored homomorphisms into half-blowups to directed homomorphisms.

\begin{lemma}\label{lem:half-blowup-test}
Let $F$ be a colored graph with $r$ colors, let $D$ be a possibly looped digraph, and let $m\ge r$.  Then $F$ admits a colored homomorphism into $D[m]$ if and only if there is a linear order $\preceq$ of the colors of $F$ such that $F^{\preceq}$ admits a digraph homomorphism to $D$.
\end{lemma}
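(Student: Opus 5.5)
We prove both directions directly by translating between the class order of $D[m]$ and orientations of $F$. Write $c:V(F)\to C$ for the coloring of $F$, where $|C|=r$.

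For the forward direction, suppose $\varphi:V(F)\to V(D[m])$ is a colored homomorphism. Each vertex $\varphi(u)$ lies in some class $V_{\iota(u)}$ and is a copy of a vertex $\psi(u)\in V(D)$. Since $\varphi$ is colored, $\iota(u)$ depends only on $c(u)$, and distinct colors go to distinct indices. This gives an injection $\iota:C\to[m]$, and we define $\preceq$ on $C$ by pulling back the order of $[m]$ through $\iota$.

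Now take an edge $uv$ of $F$. Since $F$ is properly colored, $c(u)\ne c(v)$; say $c(u)\prec c(v)$, so $\iota(u)<\iota(v)$. By the definition of $D[m]$, adjacency of $\varphi(u)\in V_{\iota(u)}$ and $\varphi(v)\in V_{\iota(v)}$ with $\iota(u)<\iota(v)$ means exactly that $\psi(u)\psi(v)$ is an arc of $D$. This arc may be a loop if $\psi(u)=\psi(v)$, which is allowed. Hence $\psi$ is a digraph homomorphism $F^{\preceq}\to D$.

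For the converse, let $\preceq$ be a linear order of $C$ and $\psi:F^{\preceq}\to D$ a homomorphism. Since $m\ge r$, we may choose an order-preserving injection $\iota:C\to[m]$, for example mapping the $t$-th color to $t$. Define $\varphi(u)$ to be the copy of $\psi(u)$ in $V_{\iota(c(u))}$. The colored condition holds because $\iota$ is injective: $\varphi(u)$ and $\varphi(v)$ share a class if and only if $c(u)=c(v)$.

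For an edge $uv$ with $c(u)\prec c(v)$, the orientation gives the arc $u\to v$ in $F^{\preceq}$, so $\psi(u)\psi(v)\in E(D)$. Because $\iota(c(u))<\iota(c(v))$, the definition of $D[m]$ makes $\varphi(u)\varphi(v)$ an edge. So $\varphi$ is a colored homomorphism.

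There is no substantial obstacle here. The only points needing care are that the class index depends only on the color, that the pulled-back order is linear precisely because $\iota$ is injective, and that loops in $D$ correctly handle $\psi(u)=\psi(v)$ across different classes. The hypothesis $m\ge r$ is used only in the converse, to fit $r$ colors injectively into the $m$ classes.
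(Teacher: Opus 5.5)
Your proposal is correct and follows the same route as the paper: the order of the image layers defines $\preceq$ and projecting onto $D$ gives the digraph homomorphism, while the converse places each color on a distinct layer via an order-preserving injection into $[m]$. Your remarks that the layer depends only on the color, and that loops in $D$ cover the case $\psi(u)=\psi(v)$, are the same points the paper's argument relies on.
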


\begin{proof}
Suppose first that $\psi:F\to D[m]$ is a colored homomorphism.  Distinct colors of $F$ are mapped to distinct color classes of $D[m]$, so the image layers induce an order $\preceq$ on the colors of $F$.  Let $p:D[m]\to D$ be the projection onto the underlying copy of $D$.  If $uv$ is an arc of $F^{\preceq}$, then the color of $u$ is mapped to an earlier layer than the color of $v$.  Since $\psi(u)\psi(v)$ is an edge of $D[m]$, the definition of half-blowup gives the arc $p(\psi(u))p(\psi(v))$ in $D$.  Thus $p\circ\psi$ is a digraph homomorphism $F^{\preceq}\to D$.

Conversely, suppose $f:F^{\preceq}\to D$ is a digraph homomorphism.  Choose an order-preserving injection from the color set of $F$, ordered by $\preceq$, into $[m]$.  Send a vertex $v$ of color $c$ to $(f(v),i_c)$, where $i_c$ is the layer assigned to $c$.  If $uv$ is an edge of $F$ and the color of $u$ is smaller than the color of $v$, then $uv$ is an arc of $F^{\preceq}$, so $f(u)f(v)$ is an arc of $D$.  Therefore $(f(u),i_{\chi(u)})$ is adjacent to $(f(v),i_{\chi(v)})$ in $D[m]$.  This gives a colored homomorphism $F\to D[m]$.
\end{proof}

\subsection{Lagrangians and the finite value set}

We record the basic properties of the Lagrangian. For a finite possibly looped digraph $D$, its \emph{density} is defined by \(\dens(D)=\frac{|E(D)|}{|V(D)|^2}, \) where loops are counted as arcs. The \emph{opposite digraph} $D^{\opp}$ is obtained by reversing every arc; loops remain loops. Note that for a non-empty finite loopless digraph, $\lambda(D)<1$.

\begin{lemma}\label{lem:lag-basic}
Let $A$ and $B$ be non-empty finite possibly looped digraphs.  Then the following hold.
\begin{enumerate}[label=\textup{(\roman*)}]
  \item $\dens(A)\le\lambda(A)$.
  \item If $A\to B$, then $\lambda(A)\le\lambda(B)$.
  \item $\lambda(A^{\opp})=\lambda(A)$.
\end{enumerate}
\end{lemma}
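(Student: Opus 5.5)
All three parts are direct manipulations of the defining quadratic form
\[
  L_A(x)=\sum_{uv\in E(A)}x_ux_v,
\]
taken over the simplex $\Delta_A=\{x\in\mathbb{R}_{\ge0}^{V(A)}:\sum_v x_v=1\}$. A loop $vv$ contributes $x_v^2$. Since $L_A$ is continuous and $\Delta_A$ is compact, the maximum $\lambda(A)$ is attained. I will prove the three items in the stated order.

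For (i), I evaluate $L_A$ at the uniform vector $x_v=1/|V(A)|$. Every arc, loops included, then contributes exactly $1/|V(A)|^2$. Hence $L_A(x)=|E(A)|/|V(A)|^2=\dens(A)$, and taking the maximum gives $\dens(A)\le\lambda(A)$.

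For (ii), let $f:A\to B$ be a homomorphism and let $x\in\Delta_A$ be an optimal weighting. I push the weights forward by setting $y_w=\sum_{v\in f^{-1}(w)}x_v$, so that $y\in\Delta_B$. Each arc $uv$ of $A$ maps to the arc $f(u)f(v)$ of $B$; this may be a loop if $f(u)=f(v)$, which is allowed because a homomorphism can only send an arc to a loop when $B$ has that loop. Grouping arcs of $A$ by their image gives
\[
  L_A(x)=\sum_{wz\in E(B)}\ \sum_{\substack{uv\in E(A)\\ f(u)=w,\ f(v)=z}}x_ux_v .
\]
For a fixed ordered pair $(w,z)$, the inner sum runs over a subset of the ordered pairs $(u,v)\in f^{-1}(w)\times f^{-1}(z)$, each pair appearing at most once since $A$ has no multiple arcs. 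The inner sum is therefore at most $\sum_{u\in f^{-1}(w)}\sum_{v\in f^{-1}(z)}x_ux_v=y_wy_z$. The same bound holds for a loop $w=z$, where it reads $y_w^2$ and covers both the loops and the arcs inside the fiber. Summing over arcs of $B$ gives $\lambda(A)=L_A(x)\le L_B(y)\le\lambda(B)$.

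The one point needing care is this bookkeeping inside a single fiber when $f(u)=f(v)$. It is resolved by noting that the ordered pairs $(u,v)$ with $u,v$ in the same fiber, including $u=v$, are all distinct terms of the expansion of $y_w^2$.

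For (iii), reversing all arcs replaces each term $x_ux_v$ by $x_vx_u$ and leaves loop terms unchanged. Thus $L_{A^{\opp}}=L_A$ identically on $\Delta_A$, and the two maxima coincide.
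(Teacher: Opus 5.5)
Your proof is correct and follows the same route as the paper: uniform weights for (i), pushing an optimal weighting forward along the homomorphism for (ii), and symmetry of each term $x_ux_v$ under arc reversal for (iii). Your fiber-by-fiber bookkeeping, which uses $x\ge0$ and the fact that each ordered pair appears at most once, justifies the inequality $\sum_{uv\in E(A)}x_ux_v\le\sum_{wz\in E(B)}y_wy_z$. The paper states that inequality without further comment.
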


\begin{proof}
For \textup{(i)}, use uniform weights.  For \textup{(ii)}, let $\phi:A\to B$ and, for a weight vector $x$ on $A$, put $y_b=\sum_{a\in\phi^{-1}(b)}x_a$.  Then $\sum_{uv\in E(A)}x_u x_v
  \le \sum_{bc\in E(B)}y_b y_c
  \le\lambda(B).$
Taking the maximum proves the claim.  Reversing arcs does not change the polynomial, giving \textup{(iii)}.
\end{proof}

\begin{lemma}\label{lem:lag-rational}
For every non-empty finite loopless digraph $D$, the value $\lambda(D)$ is rational.  Moreover, $\lambda(D)$ is the exact density of a finite loopless vertex-blowup of a subdigraph of $D$.
\end{lemma}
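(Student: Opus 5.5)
The plan is the standard Motzkin--Straus style support argument. Fix an optimal weight vector $x$ for $\lambda(D)$. Among all optimal vectors, choose one with minimal support $T=\{v:x_v>0\}$. Write $p(x)=\sum_{uv\in E(D)}x_ux_v$, and let $w_{uv}$ denote the number of arcs between $u$ and $v$ (in either direction). Since $D$ is loopless, $p$ is a quadratic form with zero diagonal: $p(x)=\sum_{\{u,v\}}w_{uv}x_ux_v$, with $w_{uv}\in\{0,1,2\}$ for $u\ne v$.

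The first step shows that every pair $u,v\in T$ has $w_{uv}\ge1$, i.e.\ $T$ induces a semicomplete subdigraph. Suppose $w_{uv}=0$ for some $u,v\in T$. Moving weight $t$ from $v$ to $u$ changes $p$ linearly in $t$, because there is no $x_ux_v$ cross term and no squares. Optimality forces this linear change to have zero slope. We can then shift all of $v$'s weight to $u$ without loss, which contradicts minimality of the support.

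The second step uses the Karush--Kuhn--Tucker conditions on $T$. All partial derivatives $\partial p/\partial x_u=\sum_{v}w_{uv}x_v$ are equal to $2\lambda(D)$ for $u\in T$, by Euler's identity $\sum_u x_u\,\partial_u p=2p$. Minimality of the support also lets us make the restricted optimum unique. If the linear system $\{\sum_{v\in T}w_{uv}x_v=2\lambda\ (u\in T),\ \sum_{v\in T} x_v=1\}$ had a positive-dimensional solution set, then moving along that line keeps $p$ constant on the affine solution set. This holds because $p$ restricted to it equals $\frac12\sum_u x_u\cdot 2\lambda=\lambda$, and I would verify this via the quadratic identity. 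Moving until some coordinate hits zero would then reduce the support, a contradiction. So $(x,\lambda)$ is the unique solution of a linear system with integer coefficients, and Cramer's rule gives $x\in\mathbb{Q}^T$ and $\lambda(D)\in\mathbb{Q}$. I expect this uniqueness/constancy check to be the main point needing care: one must confirm that along the solution line $p$ is genuinely constant. The cleanest route is to note $p(y)=\frac12\sum_u y_u\,\partial_u p(y)$ and that the gradient is fixed on the line.

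For the blowup statement, write $x_v=a_v/N$ with positive integers $a_v$ summing to $N$. Let $D'$ be the vertex-blowup of $D[T]$ in which $v$ is replaced by an independent set $A_v$ of size $a_v$, with all arcs from $A_u$ to $A_v$ whenever $uv\in E(D)$. This is loopless because $D$ is, and no loops are created inside blobs. Then $e(D')=\sum_{uv\in E(D[T])}a_ua_v=N^2p(x)$, so $\dens(D')=p(x)=\lambda(D)$ exactly.
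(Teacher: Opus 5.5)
Your plan is sound up to the last step of the rationality argument, where there is a gap. What your minimal-support argument actually proves is narrower than what Cramer's rule needs. With $\lambda=\lambda(D)$ held fixed, it shows that the system $W_Ty=2\lambda\mathbf 1$, $\mathbf 1^{\mathsf T}y=1$ (where $W_T=(w_{uv})_{u,v\in T}$) has the single solution $y=x$. But that system has the unknown real number $2\lambda$ on its right-hand side, so Cramer's rule applied to it says nothing about rationality. To get an integer system you must treat $\theta$ as a variable: $W_Ty=\theta\mathbf 1$, $\mathbf 1^{\mathsf T}y=1$ in the unknowns $(y,\theta)$. You then also have to exclude solutions $(y,\theta')$ with $\theta'\ne 2\lambda$, and nothing in your argument does this. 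It is not automatic: for a non-symmetric coefficient matrix, uniqueness on the slice $\theta=2\lambda$ does not imply uniqueness of the augmented system. The missing line is the symmetry of $W_T$. For any solution, $\theta'=x^{\mathsf T}W_Ty=y^{\mathsf T}W_Tx=2\lambda$, using $\mathbf 1^{\mathsf T}x=\mathbf 1^{\mathsf T}y=1$. (Equivalently, on the augmented solution set $p(y)=\theta'/2$ is affine and is maximized at the relative-interior point $x$, hence constant.) With that, the $(|T|+1)\times(|T|+1)$ integer matrix has trivial kernel and your Cramer argument goes through.

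This symmetry identity is precisely the key step of the paper's proof, and once it is available the paper does not need minimal supports at all. It takes an arbitrary optimal $x$ with support $S$; the stationarity conditions define a non-empty rational polytope in $(x,\theta)$. Any rational point $(y,\theta')$ of it satisfies $\theta'=\theta$ by the same computation, so $p(y)=\theta'/2=\lambda(D)$. Your patched route replaces ``a non-empty rational polyhedron has a rational point'' by the support reduction plus Cramer's rule. In exchange it shows that every minimal-support optimizer is itself rational. Your semicompleteness step is correct but not needed here; the paper uses that argument in Proposition~\ref{prop:acyclic-lagrangian}. Your blowup paragraph matches the paper's.
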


\begin{proof}
Write $P_D(x)=\frac12 x^{\mathsf T}Mx$, where $M$ is the symmetric integer matrix with zero diagonal and
$m_{ij}=\mathbf1_{ij\in E(D)}+\mathbf1_{ji\in E(D)}$ for $i\ne j$.
Choose an optimal vector $x$, and let $S$ be its support.  Stationarity on this face of the simplex gives
\[
  M[S]x=\theta\mathbf1,
  \qquad \mathbf1^{\mathsf T}x=1,
  \qquad x\ge0,
\]
where $\theta=2\lambda(D)$.  These equations and inequalities define a non-empty rational polytope in the variables $(x,\theta)$, so it has a rational point $(y,\theta')$.  Symmetry gives
\[
  \theta=y^{\mathsf T}M[S]x
  =x^{\mathsf T}M[S]y=\theta'.
\]
Hence $P_D(y)=\frac{\theta'}{2}=\lambda(D)$, giving a rational optimal vector and a rational value.

Delete the zero coordinates of $y$ and write $y_i=\frac{n_i}{N}$ with positive integers $n_i$ summing to $N$.  Replacing vertex $i$ by $n_i$ independent twins produces a loopless digraph of density
$N^{-2}\sum_{ij\in E(D[\operatorname{supp}(y)])}n_i n_j=\lambda(D)$.
Its natural projection to $D$ is a homomorphism.
\end{proof} 

\subsection{Brown--Erd\H{o}s--Simonovits well-ordering}

For a finite loopless digraph family $\Hcal$, the \emph{subdigraph-forbidding density} $\pi_{\mathrm{sub}}(\Hcal)$ is the usual Tur\'an density for forbidding members of $\Hcal$ as subdigraphs, normalized by $\frac{e(G)}{|V(G)|^2}$.

A subset of $\mathbb R$ is \emph{well-ordered} if each of its non-empty subsets has a least element.

\begin{theorem}[Brown--Erd\H{o}s--Simonovits~\cite{BES1985}]\label{thm:BES}
The set of densities
$\{\pi_{\mathrm{sub}}(\Hcal)\}$, where $\Hcal$ is a finite loopless digraph family, is well-ordered under the usual order of the real numbers.
\end{theorem}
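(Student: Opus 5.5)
This is the cited theorem of Brown, Erd\H{o}s and Simonovits, and I would prove it in two stages. First I would reduce subdigraph-forbidding densities to Lagrangians of finite templates, in the spirit of Lemma~\ref{lem:lag-rational}. Then I would prove that the resulting set of template values is well-ordered by a compactness argument driven by Ramsey's theorem.

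For the first stage, I would encode a \emph{template} as a finite vertex set together with, for every ordered pair $(u,v)$, whether $uv$ is an arc. Diagonal entries (loops) are allowed: a loop at $v$ means that the blowup class of $v$ induces a transitive tournament, while an unlooped vertex induces an independent class. One could also record a fully bidirected class, but I would only use the loop/no-loop options for the diagonal. The Lagrangian of a template is defined exactly as $\lambda(D)$. By the standard symmetrization and supersaturation argument, I would show that for a finite loopless family $\Hcal$,
\[
\pi_{\mathrm{sub}}(\Hcal)=\max\{\lambda(T): \text{no blowup of } T \text{ contains a member of } \Hcal\}.
\]
Because every member of $\Hcal$ has bounded size, a template with too many vertices contains, by Ramsey, a large homogeneous set. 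That set is either independent, in which case it can be merged by the Zykov twin argument without lowering $\lambda$, or it is transitive or bidirected, in which case its blowups already contain members of $\Hcal$. Hence the maximum is attained on a template with boundedly many vertices. I would further normalise the optimum to a \emph{dense} template: one where the optimal weight has full support and deleting any vertex strictly decreases $\lambda$.

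For the second stage, suppose for contradiction that $\pi_{\mathrm{sub}}(\Hcal_1)>\pi_{\mathrm{sub}}(\Hcal_2)>\cdots$ converges to some $\alpha$. Choose a dense optimal template $T_i$ for each $\Hcal_i$, so that $\lambda(T_i)\downarrow\alpha$. There are only finitely many templates on any fixed number of vertices, so if the sizes $|V(T_i)|$ were bounded the values would take finitely many values, which is impossible for a strictly decreasing sequence. Hence I may assume $|V(T_i)|\to\infty$.

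The key step, and the main obstacle, is a \emph{gap lemma}: for every dense template $T$ and every $\beta<\lambda(T)$, there is a bound $N(T,\beta)$ such that every dense template $T'$ with $\lambda(T')\in(\beta,\lambda(T))$ has at most $N$ vertices. I would attack it as follows.
\begin{itemize}
\item Take the optimal weights of $T_i$. Since the templates are dense, these weights satisfy the stationarity equations $M[S]x=\theta\mathbf1$.
\item Apply the infinite Ramsey theorem to the pair types, together with a bucketing of the weights. This extracts a subsequence in which the $T_i$ converge to a limit object: a finite template $T^\ast$ carrying the macroscopic weight, plus a vanishing-weight remainder whose pair types are homogeneous.
\item The homogeneity forces $\alpha=\lambda(T^\ast)$ up to a quadratic correction term from the homogeneous part, which is nonnegative.
\item Show that in a dense template this correction is strictly positive and bounded away from zero unless the homogeneous part is empty. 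This is where the stationarity equations are used: each vertex carrying tiny weight must still have marginal gain exactly $\theta$.
\end{itemize}
If the correction is bounded away from zero, then $\lambda(T_i)\ge\lambda(T^\ast)+c$ eventually, which contradicts $\lambda(T_i)\to\alpha=\lambda(T^\ast)$. If instead the homogeneous part is empty, the sizes of the $T_i$ are eventually bounded, which contradicts the previous paragraph.

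Making this correction estimate rigorous is the delicate part, since it is exactly where the loop versus no-loop diagonal choice and the acyclicity of the homogeneous structure must be exploited. If the direct estimate fails, my fallback is to induct on the size of $\Hcal$: I would bound the size of any dense template avoiding a fixed $\Hcal_1$ and whose $\lambda$ lies close to $\pi_{\mathrm{sub}}(\Hcal_1)$ from below, using the same Ramsey extraction.
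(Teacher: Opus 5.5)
The paper does not prove this statement. It is quoted from Brown--Erd\H{o}s--Simonovits and used only for Property~\ref{property:no-right}. Your reconstruction has a genuine gap at its central step.

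Apart from forcing $|V(T_i)|\to\infty$, your second stage never uses that the sequence is \emph{decreasing}. It would therefore equally exclude strictly increasing sequences of densities, and these exist. For $\Hcal_N=\{\overleftrightarrow K_2,\vec C_3,T_{N+1}\}$ one has $\pi_{\mathrm{sub}}(\Hcal_N)=\frac12(1-\frac1N)$. The balanced blowup of $T_N$ gives the lower bound. For the upper bound, a host has no digons, so every underlying $(N+1)$-clique would be a $\vec C_3$-free tournament, hence a copy of $T_{N+1}$; Tur\'an's theorem then applies. The optimal templates $T_N$ are dense in your sense: the uniform weight is the unique optimum, and deleting a vertex lowers $\lambda$. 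They also have unbounded order.

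Consequently your gap lemma is false. Take $T=\overleftrightarrow K_3$, so $\lambda(T)=\frac23$, and $\beta=\frac25$. Every $T_N$ with $N\ge 6$ is dense with $\lambda(T_N)\in(\beta,\lambda(T))$. The same templates refute three further claims:
\begin{itemize}
\item The claim that a nonempty homogeneous part yields a correction bounded away from zero: here everything is homogeneous and the values converge.
\item Your fallback: the $T_N$ avoid the digon, while $\lambda(T_N)\uparrow\frac12=\pi_{\mathrm{sub}}(\{\overleftrightarrow K_2\})$ with unbounded order.
\item The final contradiction, which is also circular. You first write $\alpha=\lambda(T^\ast)+(\text{correction})$ and then use $\alpha=\lambda(T^\ast)$. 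A limiting correction $c>0$ is perfectly consistent with $\lambda(T_i)\ge\lambda(T^\ast)+c$.
\end{itemize}

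What is missing is an argument that excludes accumulation from above while allowing it from below. Such an argument must use that $T_i$ is extremal for $\Hcal_i$, not merely dense, together with the specific arc multiplicities of digraphs. The tools you invoke (Ramsey extraction, compactness, stationarity equations) apply verbatim to multigraph templates with larger edge multiplicities. There the analogous jump property is known to fail (R\"odl and Sidorenko), so this step is the actual content of the theorem and cannot be deferred.

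Your first stage also needs repair:
\begin{itemize}
\item A loop standing for a transitive class contributes $x_v^2/2$, not $x_v^2$ as in $\lambda(D)$. With your normalization, forbidding the digon would give $1$ instead of the correct value $\frac12$.
\item Your support bound fails in the transitive case. Blowups of $T_N$ are acyclic, so for $\Hcal=\{\vec C_3\}$ a large transitive homogeneous set produces no member of $\Hcal$. Attainment of the maximum is therefore not established.
\item Zykov symmetrization neither preserves $\Hcal$-freeness for general $\Hcal$ nor produces transitive classes. So the identity $\pi_{\mathrm{sub}}(\Hcal)=\sup\lambda(T)$ is not routine; it is itself the Brown--Erd\H{o}s--Simonovits structure theorem for digraphs.
\end{itemize}
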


\begin{lemma}\label{lem:hom-sub}
Let $\Hcal$ be a finite family of loopless digraphs.  Then there exists a finite family $\Hcal'$ of loopless digraphs such that $\pi(\Hcal)=\pi_{\mathrm{sub}}(\Hcal')$.
\end{lemma}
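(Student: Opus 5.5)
We want to convert the homomorphism density $\pi(\Hcal)$ into a subdigraph-forbidding density $\pi_{\mathrm{sub}}(\Hcal')$ for a finite family $\Hcal'$. The natural choice is to let $\Hcal'$ consist of all loopless homomorphic images of members of $\Hcal$. Such an image is obtained by identifying non-adjacent vertices, i.e.\ vertices joined by no arc in either direction, so that no loop is created. Parallel arcs are merged, while opposite arcs may both survive. Since each $H\in\Hcal$ is finite, $\Hcal'$ is finite, and $\Hcal\subseteq\Hcal'$. A loopless digraph $G$ receives a homomorphism from some member of $\Hcal$ if and only if it contains some member of $\Hcal'$ as a subdigraph: the image of a homomorphism $H\to G$ with $G$ loopless is a loopless quotient of $H$ sitting in $G$.

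One degenerate case should be handled first. If some $H\in\Hcal$ has no arcs, then every non-empty digraph receives a homomorphism from $H$, so $\pi(\Hcal)=0$. Its image is then a single vertex with no arcs, which is contained in every graph, giving $\pi_{\mathrm{sub}}(\Hcal')=0$. We may therefore assume every member has an arc. The same remark applies to any $H$ that maps into a loop, which happens only when $H$ is edgeless.

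The key issue is that $\pi(\Hcal)$ is defined as a supremum over possibly looped finite hosts with no homomorphism from $\Hcal$, normalized by $|V|^2$. By contrast, $\pi_{\mathrm{sub}}$ is an asymptotic quantity over large loopless hosts.

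\textbf{Lower bound $\pi_{\mathrm{sub}}(\Hcal')\ge\pi(\Hcal)$.} Take a host $D$ with no homomorphism from $\Hcal$ and density $\dens(D)$. Form the blowup $D(n)$, replacing each vertex by $n$ independent twins. Twins of a looped vertex are joined by arcs in both directions, which keeps the graph loopless. Then $D(n)$ projects homomorphically to $D$, so $D(n)$ receives no homomorphism from $\Hcal$ and hence contains no member of $\Hcal'$. Its density is $(e(D)n^2-\ell n)/(|D|n)^2\to\dens(D)$, where $\ell$ is the number of loops. Letting $n\to\infty$ gives the inequality.

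\textbf{Upper bound $\pi_{\mathrm{sub}}(\Hcal')\le\pi(\Hcal)$.} Take large loopless $\Hcal'$-free graphs $G_n$ whose densities approach $\pi_{\mathrm{sub}}(\Hcal')$. These $G_n$ receive no homomorphism from $\Hcal$, and they are themselves non-empty finite hosts, so $\dens(G_n)\le\pi(\Hcal)$ directly. This direction is immediate once the equivalence between homomorphisms and subdigraphs is established.

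The main point requiring care is that equivalence, together with the loop convention. Hosts for $\pi(\Hcal)$ may carry loops, and a loop at $v$ admits homomorphisms from any acyclic $H$, collapsing all of it to $v$. The blowup construction above handles this, and nothing in the argument needs the well-ordering theorem.
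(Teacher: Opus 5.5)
Your proposal is correct and follows essentially the paper's proof. $\Hcal'$ is the family of loopless homomorphic images, $\Hcal$-homomorphism-freeness of a loopless host coincides with $\Hcal'$-freeness, and blowups transfer densities in both directions. Your twin-blowup, which turns each loop into a bidirected clique, is a harmless variant of the paper's observation that looped hosts always receive homomorphisms once every member has an arc.

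There are two small slips. First, every digraph maps to a looped vertex, not only an edgeless one; this remark is never used in your argument. Second, in the edgeless case it is cleaner to take $\Hcal'$ to be the one-arc digraph, as the paper does. A family containing the one-vertex digraph has no non-empty free hosts, so its $\pi_{\mathrm{sub}}$ equals $0$ only by convention.
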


\begin{proof}
If $\Hcal=\varnothing$, take $\Hcal'=\varnothing$.  If $\Hcal$ contains an edgeless non-empty digraph, then every non-empty host admits a homomorphism from that member, and the density is $0$; this is obtained by taking $\Hcal'$ to be the one-arc digraph.  We may therefore assume that every member of $\Hcal$ has at least one arc.

A host with a loop admits a homomorphism from every such member by mapping all vertices to the looped vertex.  Hence homomorphism-free hosts may be assumed loopless.  Let $\operatorname{Im}_0(\Hcal)$ be the finite family of all loopless homomorphic images of members of $\Hcal$.  For every loopless host $G$, there is a homomorphism $H\to G$ for some $H\in\Hcal$ if and only if $G$ contains some member of $\operatorname{Im}_0(\Hcal)$ as a subdigraph. Thus the finite $\Hcal$-homomorphism-free digraphs are precisely the $\operatorname{Im}_0(\Hcal)$-free digraphs. Since equal blow-ups preserve freeness and asymptotically preserve density, the supremum over finite $\operatorname{Im}_0(\Hcal)$-free digraphs equals $\pi_{\mathrm{sub}}(\operatorname{Im}_0(\Hcal))$. Therefore $\pi(\Hcal)=\pi_{\mathrm{sub}}(\operatorname{Im}_0(\Hcal))$.
\end{proof}

\section{Methods}\label{sec:method}

\subsection{Low-resolution sparsification and degree profiles}

Lemma~\ref{lem:lowres} preserves all pair densities on a large subfamily of a prescribed partition.  We will prove a stronger statement that also preserves normalized degree distributions and the relative degrees of individual sampled vertices.  This additional information is what allows the same method to treat minimum degree.  For a vertex $v$ and a non-empty multiset $B$, write $d(v,B)$ for the fraction of occurrences in $B$ adjacent to $v$.

\begin{corollary}\label{cor:main}
For every $\varepsilon>0$ there exists $s=s(\varepsilon)$ such that, for every positive integer $m$, there exists $k=k(\varepsilon,m)$ with the following property.  Every $d$-dense $k$-colored graph contains $m$ color classes with $s$-element multisets $S_1,\dots,S_m$ satisfying $d(S_i,S_j)\ge d-\varepsilon$ for all $i\ne j$.
\end{corollary}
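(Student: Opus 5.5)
This corollary follows directly from Lemma~\ref{lem:lowres}, so the plan is a short deduction rather than a new argument. Given $\varepsilon>0$, take $s=s(\varepsilon)$ and, for each $m$, take $k=k(\varepsilon,m)$ exactly as provided by Lemma~\ref{lem:lowres}. The lemma makes no density hypothesis, so we may apply it to an arbitrary $d$-dense $k$-colored graph $G$ with classes $V_1,\dots,V_k$. It yields a set $I\subseteq[k]$ of size $m$ and $s$-element multisets $S_i\subseteq V_i$, $i\in I$, with $|d(S_i,S_j)-d(V_i,V_j)|\le\varepsilon$ for all distinct $i,j\in I$.

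The only remaining step is to combine this with density. Since $G$ is $d$-dense, $d(V_i,V_j)\ge d$ for every pair of distinct classes. Here we use that $d(V_i,V_j)$, with $V_i,V_j$ viewed as sets (multisets of multiplicity one), is exactly the bipartite edge density between the classes. Hence $d(S_i,S_j)\ge d(V_i,V_j)-\varepsilon\ge d-\varepsilon$ for all distinct $i,j\in I$. Relabel the $m$ indices of $I$ as $1,\dots,m$ and keep the corresponding multisets. These are the $m$ color classes and the multisets $S_1,\dots,S_m$ required by the statement.

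There is essentially no obstacle once Lemma~\ref{lem:lowres} is available; all the difficulty lies in that lemma, or in its strengthening Theorem~\ref{thm:technical}. Two small points need checking. First, the order of quantifiers must be preserved: $s$ depends only on $\varepsilon$, and $k$ depends on $\varepsilon$ and $m$, which is exactly what the lemma provides. Second, $d(S_i,S_j)$ is symmetric in $i$ and $j$, so a bound on unordered pairs suffices.

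If instead one wanted a self-contained route through Theorem~\ref{thm:technical}, the same deduction would go through. One would keep only its pair-density conclusion and discard the degree-profile information.
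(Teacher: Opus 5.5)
Your proposal is correct and matches the paper, which also states that this corollary is immediate from Lemma~\ref{lem:lowres}. Your argument is exactly that: apply the lemma and combine $|d(S_i,S_j)-d(V_i,V_j)|\le\varepsilon$ with $d(V_i,V_j)\ge d$.
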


This is immediate from Lemma~\ref{lem:lowres}.  To pass from Corollary~\ref{cor:main} to Theorem~\ref{thm:blowup}, label the $s$ occurrences in each sample.  There are at most $2^{s^2}$ bipartite adjacency matrices, so Ramsey's theorem finds a large subfamily on which every ordered pair has the same matrix.  A final sampling argument reduces the directed template to $\lceil\varepsilon^{-1}\rceil$ vertices.  The independence of $s$ from the number of retained classes is essential: it allows the Ramsey number to be chosen after $s$ has been fixed.

For a bipartite graph on $(A,B)$, its \emph{degree profile on $A$} is the tail function
\[
  f_{A,B}(\mu)=\frac{|\{a\in A:d(a,B)\ge\mu\}|}{|A|},
  \qquad 0\le\mu\le1.
\]
Occurrences are counted separately when a class is a multiset.  Two bipartite graphs on $(A,B)$ and $(A',B')$ are \emph{$\varepsilon$-profile-close} if
\[
  \int_0^1|f_{A,B}(\mu)-f_{A',B'}(\mu)|\,d\mu\le\varepsilon,
\]
and the analogous inequality holds after interchanging the two sides.  Since the integral of a degree profile is the edge density, profile closeness implies that the two edge densities differ by at most $\varepsilon$.

For multisets $S$ of vertices of $A$ and $T$ of vertices of $B$, we say that $(S,T)$ is an \emph{$\varepsilon$-discretization} of $G[A,B]$ if $G[S,T]$ and $G[A,B]$ are $\varepsilon$-profile-close and, in addition,
\[
  |d(v,T)-d(v,B)|\le\varepsilon\quad(v\in S),
  \qquad
  |d(w,S)-d(w,A)|\le\varepsilon\quad(w\in T).
\]
Thus the profile condition controls the degree distribution, while the two pointwise conditions prevent an exceptional sampled vertex from losing many of its neighbors.

\begin{theorem}[Low-resolution sparsification with degree control]\label{thm:technical}
For every $\varepsilon>0$ there exist $s=s(\varepsilon)$ and $n=n(\varepsilon)$ such that, for every positive integer $m$, there exists $k=k(\varepsilon,m)$ with the following property.  Every $k$-colored graph with classes $V_1,\dots,V_k$ of size at least $n$ has $m$ classes $V_{a_1},\dots,V_{a_m}$ and subsets $S_i\subseteq V_{a_i}$ of size $s$ such that every pair $(S_i,S_j)$, $i\ne j$, is an $\varepsilon$-discretization of $G[V_{a_i},V_{a_j}]$.

For $0<\varepsilon\le1$, one may take $s=O(\varepsilon^{-3}\log(2/\varepsilon))$.  In particular, pair density at least $d$ is preserved up to $\varepsilon$, and degree at least $\delta$ is preserved as minimum degree at least $(\delta-\varepsilon)s$ between every two selected subsets.
\end{theorem}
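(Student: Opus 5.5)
We prove Theorem~\ref{thm:technical} by sampling in rounds, in the spirit of the low-resolution method sketched in the introduction. Fix $t=\lceil 4/\varepsilon\rceil$ rounds and block size $b=C\varepsilon^{-2}\log(2/\varepsilon)$, so that $s=tb=O(\varepsilon^{-3}\log(2/\varepsilon))$. Each final set is $S_i=B_i^{(1)}\cup\dots\cup B_i^{(t)}$, where $B_i^{(r)}\subseteq V_{a_i}$ is the block chosen in round $r$. The class size threshold $n(\varepsilon)$ only ensures that blocks can be taken as genuine sets; we sample without replacement, or sample with replacement and discard the $o(1)$-probability collision event.

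Only pairs of blocks from different rounds are controlled. Suppose the blocks of rounds $1,\dots,r-1$ are already fixed on a surviving family of classes. For each surviving class $i$, choose $B_i^{(r)}$ uniformly at random. Since all earlier blocks are fixed, the relevant events for a pair $(i,j)$ depend only on the new samples and not on the number of classes. We call a new block \emph{good relative to} an old block $B_j^{(r')}$ when three conditions hold:
\begin{itemize}
  \item every vertex $w\in B_j^{(r')}$ satisfies $|d(w,B_i^{(r)})-d(w,V_i)|\le\varepsilon/4$;
  \item the proportion of $v\in B_i^{(r)}$ with $|d(v,B_j^{(r')})-d(v,V_j)|>\varepsilon/4$ is at most $\varepsilon/4$;
  \item the degree profile of $B_i^{(r)}$ toward $V_j$ is close to that of $V_i$ toward $V_j$.
\end{itemize}
For the first condition, Lemma~\ref{lem:hoeffding} and a union bound over the $s$ old vertices give failure probability at most $2s\,e^{-\varepsilon^2 b/8}$, which is small for large $C$. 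The third condition follows from a DKW-type or discretized-$\mu$ Hoeffding bound. The old block's own pointwise conditions toward the new block are exactly the first condition.

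A single random choice does not handle every pair at once, because the number of surviving classes is unbounded. We therefore use Ramsey to stabilise. Enumerate all configurations up to isomorphism; there are boundedly many in terms of $\varepsilon$, $t$ and $b$. Alternatively, and more simply, for each class choose the block at random and colour each pair $\{i,j\}$, with $i<j$, by which of the finitely many goodness conditions hold in each direction and each round pair. Each condition holds with probability at least $1-\varepsilon/10$, so the expected number of bad pairs is small. By Tur\'an/Ramsey in iterated form, a large subfamily exists on which all pairs are good. Concretely, at round $r$ we colour pairs by goodness, pass to a monochromatic sub-family using $R(n,2^{O(t)})$, and rule out the all-bad colour by averaging, since a random choice makes most pairs good. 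Iterating $t$ rounds, $k(\varepsilon,m)$ is a $t$-fold tower of Ramsey numbers starting from $m$. This stabilisation is the main obstacle: the round-$r$ choices must be made simultaneously for all classes while the monochromatic set still needs to be large. I would handle it by fixing the random blocks first, then invoking Ramsey with the colour recording goodness, and noting that the all-good colour class must appear, since otherwise a large clique of bad pairs contradicts the expectation count via a Tur\'an-type bound on the bad graph.

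Finally, we assemble $S_i$ and $S_j$. Pairs of blocks from the same round, $\{B_i^{(r)},B_j^{(r)}\}$, are uncontrolled, but they form only a $1/t\le\varepsilon/4$ fraction of $S_i\times S_j$. Consider $v\in B_i^{(r)}$. Its value $d(v,S_j)$ averages $d(v,B_j^{(r')})$ over $r'$. All but one round contributes a controlled term, so $|d(v,S_j)-d(v,V_j)|\le\varepsilon/4+1/t+\text{(exceptional fraction)}$. For the old-versus-new direction the bound is pointwise for every vertex. For the new-versus-old direction, any vertex failing the pointwise condition must be discarded. To avoid this, we strengthen the second condition to hold for every vertex, using a union bound over the $b$ new vertices, which is affordable since $b$ and $s$ are comparable. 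This makes the pointwise bounds hold for all $v$. Profile closeness then follows: each $f_{S_i,S_j}$ is within $\varepsilon/2$ in $L^1$ of the average of the block profiles, because pointwise degree errors of size $\eta$ shift profiles by at most $\eta$ in $L^1$. Density and minimum-degree preservation follow directly, as noted in the statement.
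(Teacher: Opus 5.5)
Your overall architecture matches the paper: blocks chosen in rounds, only cross-round pairs controlled, same-round pairs absorbed as a $1/r$ fraction, and Ramsey used to select classes. The step you yourself flag as the main obstacle, however, is not resolved.

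\textbf{The Ramsey step.} Fixing one realization of all new blocks and colouring pairs by which goodness conditions hold cannot produce an all-good subfamily of unbounded size. Ramsey's theorem does not let you choose the colour of the monochromatic set. The mixed colours are never excluded: the block of $i$ may be good against $j$ while the block of $j$ is bad against $i$, or a pair may be good for some round pairs and not others. The all-bad colour is not excluded either. A small expected density $p$ of bad pairs is compatible with bad cliques of any bounded size, for instance a disjoint union of bad cliques each of size $p$ times the number of classes. Tur\'an's bound for a bad graph of density $p$ only gives an independent set of size about $1/p$, not the $u_{r+1}$ classes you need, which grow with $m$.

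The paper avoids a fixed realization altogether. Goodness of the new block of $i$ with respect to $j$ depends only on the sample from $V_i$ and already-fixed data. The paper colours each $u$-set $U$ by the least position whose class has \emph{no} sample good against all of $U$, and finds a monochromatic $(4u+1)$-set via $R_u(4u+1,u)$. It then re-samples from the middle class. Some realization is good against at least $3u$ of the other $4u$ indices, hence against at least $u$ on each side. This yields a $u$-set with the middle index at the monochromatic position, a contradiction.

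\textbf{Condition (2).} Your condition (2), and especially its strengthening to every new vertex, is not controlled by the randomness of the new block. Take a random $v\in V_i$ and a \emph{fixed} old block $B_j^{(r')}$. The probability that $|d(v,B_j^{(r')})-d(v,V_j)|>\varepsilon/4$ is just the fraction of $V_i$ whose degree that old block misrepresents. There is no Hoeffding tail here, so your union bound over the $b$ new vertices has nothing to bound.

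This fraction must be made tiny at the time the old block is chosen, against every surviving full class. That is the paper's invariant (A1): each block is an $(\eta/2,\gamma)$-semi-discretization against every other active $V_j$, with $\gamma$ exponentially small in the block size. The paper takes $\gamma=e^{-\eta^2 t/16}$, where $t$ is its block size (your $b$), so that $\gamma t\le\frac{1}{16r}$. A later block of size $t$ then misses the exceptional set with high probability (Lemma~\ref{lem:full-disc}). Your condition (3) keeps only the profile half of semi-discretization. It drops the pointwise-for-most-vertices half, which is exactly what later rounds need.
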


\begin{proof}[Proof of Lemma~\ref{lem:lowres} from Theorem~\ref{thm:technical}]
Replace each vertex in $V_i$ by $t_i$ independent twins, where $t_i$ is chosen so that $t_i|V_i|\ge n(\varepsilon)$.  This preserves every pair density.  Apply Theorem~\ref{thm:technical} to the enlarged graph and project the selected subsets back to the original classes.  The projections are multisets of the same size, and their pair densities are unchanged.  Profile closeness therefore gives the required density approximation.  The bound on $s$ is inherited from Theorem~\ref{thm:technical}.
\end{proof}

\subsection{Finite supports and realization}

The directed reduction leaves two questions: why an extremal value has a finite Lagrangian witness, and how to realize an arbitrary finite digraph Lagrangian by colored obstructions.  The first follows from a support reduction.  The second uses a simple way to make an obstruction survive every ordering of its color classes.

A digraph is \emph{semicomplete} if every pair of distinct vertices is joined by at least one arc.  Let $T_h$ denote the transitive tournament on $h$ vertices.

\begin{proposition}\label{prop:acyclic-lagrangian}
Let $\Hcal$ be a non-empty finite family of loopless acyclic digraphs.  Then $\pi(\Hcal)=\lambda(Q)$ for some non-empty finite loopless digraph $Q$.
If every member of $\Hcal$ has an arc and $h=\min\{|V(H)|:H\in\Hcal\}$, then $Q$ may be chosen semicomplete, receiving no homomorphism from any member of $\Hcal$, and satisfying $|V(Q)|<2^{h-1}$.
\end{proposition}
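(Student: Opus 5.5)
Dispose first of the degenerate cases.

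If some member of $\Hcal$ has no arc, then every non-empty host receives a homomorphism from it. Hence $\pi(\Hcal)=0$ by convention, and we may take $Q$ to be the single-arc digraph? That fails, since its Lagrangian is $\tfrac14$. Instead, note that $\lambda$ is defined for non-empty digraphs, and a single vertex with no arcs has $\lambda=0$; take $Q$ to be that one-vertex digraph. So assume every member has an arc, and put $h=\min|V(H)|$, with $h\ge2$.

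For the upper bound, let $G$ be any finite $\Hcal$-hom-free host. As in Lemma~\ref{lem:hom-sub}, $G$ is loopless, since a loop receives everything. By Lemma~\ref{lem:lag-basic}(i), $\dens(G)\le\lambda(G)$. Choose an optimal weight vector $x$ for $\lambda(G)$ with minimal support $S$. The standard Motzkin--Straus argument gives that $S$ spans a semicomplete subdigraph. Indeed, if $u,v\in S$ are non-adjacent in both directions, then the objective is linear along the segment moving weight between $u$ and $v$, because there is no $x_ux_v$ term. Moving all of one's weight to the other keeps the value optimal and shrinks the support, a contradiction.

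So $Q:=G[S]$ is semicomplete, loopless, $\lambda(Q)=\lambda(G)$, and $Q\subseteq G$ is hom-free. This is the main step to nail down. Now, every tournament on $2^{h-1}$ vertices contains $T_h$ by the classical Erdős–Moser/Stearns bound, and a semicomplete digraph contains a spanning tournament. Every acyclic $H$ on $h$ vertices maps to $T_h$ via a topological order. Hence a hom-free semicomplete $Q$ has $|V(Q)|<2^{h-1}$.

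Thus $\pi(\Hcal)\le\sup\lambda(Q)$, the supremum taken over the finitely many (up to isomorphism) hom-free semicomplete loopless digraphs on fewer than $2^{h-1}$ vertices. A finite supremum is a maximum, attained by some $Q_0$.

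For the lower bound, use Lemma~\ref{lem:lag-rational}. $\lambda(Q_0)$ is the exact density of a finite loopless blowup $B$ of a subdigraph of $Q_0$. Since $B\to Q_0$ and $Q_0$ is hom-free, $B$ is hom-free, because a composition of homomorphisms would give $H\to Q_0$. Hence $\pi(\Hcal)\ge\dens(B)=\lambda(Q_0)$, giving equality with $Q=Q_0$ semicomplete, hom-free, and of size $<2^{h-1}$.

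The main obstacle is the support-reduction step. One must handle the case where the objective is constant along the segment, by taking the support minimal among optimal vectors, and confirm that arcs between $u$ and $v$ are genuinely absent in both directions so the $x_ux_v$ coefficient vanishes.
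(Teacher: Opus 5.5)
Your proposal is correct and follows the paper's proof essentially step for step. It uses the same ingredients: the minimal-support argument making an optimal support semicomplete (your handling of the constant-along-the-segment case by minimality is right), the $2^{h-1}$ transitive-subtournament bound via a topological ordering, and Lemma~\ref{lem:lag-rational} to realize the maximal Lagrangian as the exact density of a hom-free blowup; the paper merely packages your separate upper and lower bounds as the single identity $\pi(\Hcal)=\sup_{G}\lambda(G)$ over hom-free loopless hosts.
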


Indeed, an optimal weight vector of minimal support cannot give positive weight to two nonadjacent vertices: moving their combined weight to one of them does not lower the objective.  The support is therefore semicomplete.  A sufficiently large semicomplete digraph contains a transitive tournament into which an acyclic obstruction maps, bounding the support size.

For realization, the following lemma removes the dependence on the order of the colors.  Recall that $H^{\opp}$ is obtained by reversing every arc of $H$.

\begin{lemma}\label{lem:layered}
Let $D_0$ be a non-empty finite loopless digraph and let $H$ be a finite possibly looped digraph.  Suppose that $H\nrightarrow D_0$ and $H^{\opp}\nrightarrow D_0$.  Set
$ m_H=|V(D_0)|^{|V(H)|}+1$.
Then $(H[m_H])^{\preceq}\nrightarrow D_0$ for every linear order $\preceq$ of the color classes of $H[m_H]$.
\end{lemma}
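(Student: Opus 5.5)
The plan is a pigeonhole argument over the layers, followed by a single comparison of two layers that carry the same map; that comparison produces a homomorphism from either $H$ or $H^{\opp}$ into $D_0$, depending on how $\preceq$ orders the two layers. I argue by contradiction: suppose that for some linear order $\preceq$ of the classes $V_1,\dots,V_{m_H}$ of $H[m_H]$ there is a digraph homomorphism $f:(H[m_H])^{\preceq}\to D_0$.

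\emph{Step 1: restrict $f$ to each layer.} Each layer $V_i$ is a copy of $V(H)$, with vertices $u_i$ for $u\in V(H)$. Define $f_i:V(H)\to V(D_0)$ by $f_i(u)=f(u_i)$. There are only $|V(D_0)|^{|V(H)|}$ maps $V(H)\to V(D_0)$, and there are $m_H=|V(D_0)|^{|V(H)|}+1$ layers. By pigeonhole there are indices $i<j$ (in the natural index order) with $f_i=f_j=:g$.

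\emph{Step 2: read off arcs between layers $i$ and $j$.} By the definition of the half-blowup, since $i<j$, for every arc $uv$ of $H$ (loops $uu$ included) the vertices $u_i$ and $v_j$ are adjacent in $H[m_H]$. The orientation of this edge in $(H[m_H])^{\preceq}$ depends only on how $\preceq$ compares $V_i$ and $V_j$, and this comparison is the same for all edges between the two layers. I split into two cases.

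\begin{itemize}
\item If $V_i\prec V_j$, every such edge is oriented $u_i\to v_j$. Applying $f$ gives the arc $g(u)g(v)$ in $D_0$ for every arc $uv$ of $H$, so $g$ is a homomorphism $H\to D_0$.
\item If $V_j\prec V_i$, every such edge is oriented $v_j\to u_i$. Applying $f$ gives the arc $g(v)g(u)$ in $D_0$, so $g$ is a homomorphism $H^{\opp}\to D_0$.
\end{itemize}

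Either case contradicts the hypotheses $H\nrightarrow D_0$ and $H^{\opp}\nrightarrow D_0$. Loops of $H$ need no separate treatment: a loop $uu$ of $H$ would force a loop at $g(u)$ in the loopless $D_0$. That is itself a contradiction, and it is consistent with the case analysis, since the requirement $g(u)g(u)\in E(D_0)$ is exactly what a homomorphism from looped $H$ demands.

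There is no real obstacle here; the only point to check is that the two layers agreeing under $f$ must be compared using the index order $i<j$, since that order is what defines adjacency in $H[m_H]$. The order $\preceq$ then only decides whether the common map $g$ realizes $H$ or $H^{\opp}$. This is why both non-homomorphism hypotheses are needed, and why the value $m_H$, which is one more than the number of possible layer maps, suffices.
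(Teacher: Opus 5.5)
Your proposal is correct and follows essentially the same argument as the paper. Both proofs apply pigeonhole to the $|V(D_0)|^{|V(H)|}$ possible layer maps to find layers $i<j$ carrying a common map $g$. The edges $u_iv_j$ for $uv\in E(H)$ then make $g$ a homomorphism from $H$ or from $H^{\opp}$ into $D_0$, according to how $\preceq$ orders the two layers.
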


A homomorphism would give the same map to $D_0$ on two layers, by pigeonhole.  Comparing the order of those two layers recovers a homomorphism from either $H$ or $H^{\opp}$.  In Section~\ref{sec:main-proofs}, we apply this lemma to the finitely many bounded semicomplete templates with Lagrangian larger than $\lambda(D_0)$.  Adding a rainbow clique obstruction bounds the template size and makes this finite exclusion exact.

\section{Proofs of the technical ingredients}\label{sec:method-proofs}

\subsection{Proof of the low-resolution theorem}

\begin{figure}[ht]
\centering
\includegraphics[width=0.8\textwidth]{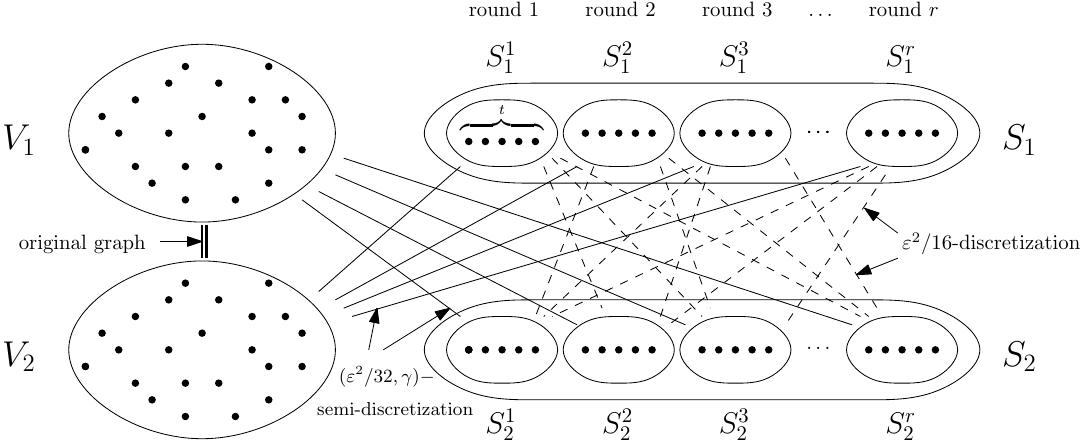}
\caption{Scheme for the vertex selection within the color classes $V_1$ and $V_2$, and the discretization requirements between each pair of sets. Notably, there is no requirement between pairs of sets selected on the same round.}
\end{figure}

We select $t$ vertices per active class in each of $r$ rounds.  Let $S_i^p$ be the block selected from $V_i$ in round $p$.  The invariant controls $(S_i^p,S_j^q)$ only when $p\ne q$.  The uncontrolled same-round pairs contribute a fraction $\frac{1}{r}$ of each final bipartite graph, and each sampled vertex has only one uncontrolled opposite block.  This is enough for both profile and pointwise degree preservation.

The intermediate notion keeps one side unsampled.  A multiset $S$ of vertices of $A$ gives an \emph{$(\varepsilon,\gamma)$-semi-discretization} $(S,B)$ of $G[A,B]$ if $G[S,B]$ is $\varepsilon$-profile-close to $G[A,B]$ and all but $\gamma|B|$ vertices $b\in B$ satisfy $|d(S,b)-d(A,b)|\le\varepsilon$.  As before, occurrences in a multiset are counted separately.

The following lemmas will be the main steps of the proof.

\begin{lemma}\label{lem:semi-disc}
Let $r$ be a positive integer.  Let $G$ be a bipartite graph on $(A,B)$, and let $S=(v_1,\dots,v_t)$ be sampled independently and uniformly with replacement from $A$, viewed as a multiset.  Let $0<\delta\le1$, $0<\gamma\le\frac{\delta}{2}$, and put $z=\lceil\frac{2}{\delta}\rceil$.  If
  $2e^{-\frac{\delta^2t}{2}}\bigl(z+1+\gamma^{-1}\bigr)
  \le \frac{1}{16r},$
then $(S,B)$ is a $(\delta,\gamma)$-semi-discretization of $G$ with probability at least $1-\frac{1}{16r}$.
\end{lemma}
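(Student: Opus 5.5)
We must show three things about the sample $S$: its profile on the $S$-side is close to that of $A$, its profile on the $B$-side is close to that of $A$, and most $b\in B$ have $d(S,b)$ close to $d(A,b)$. The plan is to control a bounded number of bad events with Lemma~\ref{lem:hoeffding}, and to handle the remaining $B$-side quantities by averaging with Markov's inequality. The constants suggest the split: $z+1$ events come from a grid of thresholds, and $\gamma^{-1}$ comes from a Markov step. All failure probabilities must total at most $\frac{1}{16r}$.

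\textbf{The $S$-side profile.} Here $f_{S,B}(\mu)$ is the fraction of sampled occurrences $v_i$ with $d(v_i,B)\ge\mu$. Each indicator $\mathbf 1\{d(v_i,B)\ge\mu\}$ is Bernoulli with mean $f_{A,B}(\mu)$, and the samples are independent. Fix the grid $\mu_\ell=\ell/z$ for $\ell=0,\dots,z$, which has $z+1$ points. By Lemma~\ref{lem:hoeffding} with deviation $\delta/2$, each grid point fails with probability at most $2e^{-\delta^2t/2}$. These events account for the term $z+1$ in the hypothesis.

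On the good event, monotonicity of both tail functions handles $\mu\in[\mu_{\ell},\mu_{\ell+1}]$. We get $|f_{S,B}(\mu)-f_{A,B}(\mu)|\le \delta/2+(f_{A,B}(\mu_\ell)-f_{A,B}(\mu_{\ell+1}))$. Integrating over $[0,1]$, the second term telescopes to at most $1/z\le\delta/2$. So the integral of the difference is at most $\delta$.

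\textbf{Pointwise degrees on the $B$-side.} For each $b\in B$, $d(S,b)$ is an average of $t$ independent Bernoulli variables with mean $d(A,b)$. Lemma~\ref{lem:hoeffding} gives $\Pr(|d(S,b)-d(A,b)|>\delta/2)\le 2e^{-\delta^2t/2}=:q$. Let $X$ be the number of bad $b$; then $\mathbb E X\le q|B|$. Markov's inequality gives $\Pr(X>\gamma|B|)\le q\gamma^{-1}$, which is the $\gamma^{-1}$ term. On the complement, all but $\gamma|B|$ vertices $b$ satisfy $|d(S,b)-d(A,b)|\le\delta/2\le\delta$, which is the pointwise requirement.

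\textbf{The $B$-side profile.} This is the main obstacle, since the profile on $B$ involves all of $B$. I would reuse the same good event. For the at least $(1-\gamma)|B|$ good vertices, the degrees shift by at most $\delta/2$. For any $\mu$, we have $\{b: d(A,b)\ge \mu+\delta/2\}\setminus\mathrm{Bad}\subseteq\{b:d(S,b)\ge\mu\}$, and the symmetric inclusion also holds. Hence $f_{A,B}(\mu+\delta/2)-\gamma\le f_{B,S}(\mu)\le f_{A,B}(\mu-\delta/2)+\gamma$, where $f_{A,B}$ here denotes the profile on $B$, extended by $1$ below $0$ and by $0$ above $1$. The integral of the error is then at most $\gamma$ plus $\int_0^1 (f(\mu-\delta/2)-f(\mu+\delta/2))\,d\mu$. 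The latter is at most $\delta/2$ by a shift-of-monotone-function estimate: it equals the integral over windows, bounded by the window length times the total drop of $1$. So the $B$-side error is at most $\gamma+\delta/2\le\delta$, using $\gamma\le\delta/2$.

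\textbf{Conclusion.} The union bound gives total failure probability at most $q(z+1)+q\gamma^{-1}\le\frac1{16r}$, which is exactly the hypothesis.
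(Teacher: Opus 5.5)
Your $S$-side argument is correct and slightly more direct than the paper's. Instead of arguing by contradiction through Lemma~\ref{lem:functions}, you bracket $f_{S,B}$ between grid values on each interval $[\mu_\ell,\mu_{\ell+1}]$ and telescope, which gives $\frac{\delta}{2}+\frac1z\le\delta$. Your pointwise $B$-side step via Markov's inequality is exactly the paper's.

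The gap is in the $B$-side profile. Write $f=f_{B,A}$ for the profile of $B$ into $A$. Your sandwich $f(\mu+\delta/2)-\gamma\le f_{B,S}(\mu)\le f(\mu-\delta/2)+\gamma$ is fine, but the claim
\[
\int_0^1\bigl(f(\mu-\delta/2)-f(\mu+\delta/2)\bigr)\,d\mu\le\frac{\delta}{2}
\]
is false. The window from $\mu-\delta/2$ to $\mu+\delta/2$ has length $\delta$, so your own ``window length times total drop'' argument only gives $\delta$, and this is sharp. If every $b\in B$ has $d(b,A)=\frac12$, then $f=\mathbf 1_{[0,1/2]}$. The integrand then equals $1$ on $(\frac12-\frac{\delta}{2},\frac12+\frac{\delta}{2}]$, so the integral is exactly $\delta$; replacing the sum of the two one-sided errors by their maximum does not help here. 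Your estimate therefore yields only $\gamma+\delta$, which exceeds the required $\delta$. You cannot shrink the pointwise deviation to $\delta/4$ either, because the hypothesis $2e^{-\delta^2t/2}(z+1+\gamma^{-1})\le\frac1{16r}$ is calibrated exactly to Hoeffding with deviation $\delta/2$. Any argument that uses only the pointwise sandwich, without the monotonicity of $f_{B,S}$ itself, loses this factor of $2$.

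The repair is to drop the sandwich and average the pointwise deviations directly, as the paper does. For $x,y\in[0,1]$ one has $\int_0^1|\mathbf 1_{x\ge\mu}-\mathbf 1_{y\ge\mu}|\,d\mu=|x-y|$, hence
\[
\int_0^1|f_{B,S}(\mu)-f_{B,A}(\mu)|\,d\mu\le\frac{1}{|B|}\sum_{b\in B}|d(b,S)-d(b,A)|\le\frac{\delta}{2}+\gamma\le\delta.
\]
Good vertices contribute at most $\delta/2$ each, and the at most $\gamma|B|$ exceptional ones contribute at most $1$ each. With this replacement and the union bound you already have, the proof goes through.
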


\begin{lemma}\label{lem:full-disc}
Let $r$ be a positive integer.  Let $G$ be a bipartite graph on $(A,B)$, let $(S,B)$ be a $(\frac{\delta}{2},\gamma)$-semi-discretization of $G$ with $|S|=t$, and sample $T=(w_1,\dots,w_t)$ independently and uniformly with replacement from $B$, viewed as a multiset.  Let $0<\delta\le1$, $0<\gamma\le\frac{\delta}{4}$, and put $z'=\lceil\frac{4}{\delta}\rceil$.  If \( \gamma t\le\frac{1}{16r}\) and
$  2e^{-\frac{\delta^2t}{8}}\bigl(z'+1+\gamma^{-1}\bigr)
  \le\frac{1}{16r}$
,
then $(S,T)$ is a $\delta$-discretization of $G$ with probability at least $1-\frac{1}{8r}$.
\end{lemma}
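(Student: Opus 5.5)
The plan is to condition on $S$, which is fixed and satisfies the semi-discretization hypothesis, and to show that the random multiset $T$ is good for each of the four required properties outside a small failure event. The hypotheses on $t$ are arranged so that all failure probabilities add up to at most $\frac{1}{16r}+\frac{1}{16r}=\frac{1}{8r}$. The first observation is that $\gamma t\le \frac{1}{16r}$ forces $t\le \gamma^{-1}$. This is what lets me take a union bound over the (at most $t$) distinct vertices of $S$ while paying only the $\gamma^{-1}$ term in the hypothesis.

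\emph{Pointwise conditions.} Call $b\in B$ bad if $|d(S,b)-d(A,b)|>\frac{\delta}{2}$; there are at most $\gamma|B|$ bad vertices. Each $w_i$ is bad with probability at most $\gamma$, so with probability at least $1-\gamma t\ge 1-\frac{1}{16r}$ no occurrence in $T$ is bad. On that event, $|d(w,S)-d(w,A)|\le\frac{\delta}{2}$ for every $w\in T$.

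For $v\in S$, the quantity $d(v,T)$ is an average of $t$ independent Bernoulli variables with mean $d(v,B)$. Lemma~\ref{lem:hoeffding} with deviation $\frac{\delta}{4}$ gives failure probability $2e^{-\delta^2t/8}$. A union bound over at most $t\le\gamma^{-1}$ vertices costs $2e^{-\delta^2t/8}\gamma^{-1}$, and on the complement $|d(v,T)-d(v,B)|\le\frac{\delta}{4}$ for all $v\in S$.

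Finally, for the thresholds $\mu_j=j/z'$ with $0\le j\le z'$, I apply Hoeffding to the indicator that $d(w_i,A)\ge\mu_j$. This is a further $z'+1$ events, each of probability $2e^{-\delta^2t/8}$, and on their complement the empirical tail of $d(\cdot,A)$ over $T$ is within $\frac{\delta}{4}$ of $f_{B,A}(\mu_j)$ at every grid point. The hypothesis bounds the total of these last two failure probabilities by $\frac{1}{16r}$.

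\emph{Profile closeness on the $S$ side.} Every $v\in S$ has $d(v,T)$ within $\frac{\delta}{4}$ of $d(v,B)$. For a nonincreasing function on $[0,1]$ with values in $[0,1]$, shifting the argument by $\eta$ changes the integral of the absolute difference by at most $\eta$. Hence $\int|f_{S,T}-f_{S,B}|\le\frac{\delta}{4}$. Combining this with the hypothesis $\int|f_{S,B}-f_{A,B}|\le\frac{\delta}{2}$ gives at most $\frac{3\delta}{4}\le\delta$.

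\emph{Profile closeness on the $T$ side.} This is the most delicate step. Let $g$ be the empirical tail of $d(\cdot,A)$ over $T$. Since each $d(w,S)$ lies within $\frac{\delta}{2}$ of $d(w,A)$, the same shift argument gives $\int|f_{T,S}-g|\le\frac{\delta}{2}$. To compare $g$ with $f_{B,A}$, I use the grid estimates together with monotonicity. On each cell $[\mu_j,\mu_{j+1}]$ both functions are nonincreasing, so $|g-f_{B,A}|$ is at most $\frac{\delta}{4}$ plus the drop of $f_{B,A}$ across the cell. Integrating, the drops contribute at most $\frac{1}{z'}\cdot 1\le\frac{\delta}{4}$ in total. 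This yields $\int|g-f_{B,A}|\le\frac{\delta}{2}$, and the total on this side is at most $\delta$.

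Collecting the events, $(S,T)$ is a $\delta$-discretization of $G$ with probability at least $1-\frac{1}{8r}$. I expect the main care to go into this discretization bookkeeping, checking that the step and shift errors really sum to $\delta$ with the chosen constants.
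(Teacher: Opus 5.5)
Your proof is correct, but it is organized differently from the paper's. The paper does not redo any concentration. It applies Lemma~\ref{lem:semi-disc} with parameter $\frac{\delta}{2}$ to $G[B,S]$, treating the occurrences of $S$ as vertices; the numerical hypothesis is then exactly the one given. So with probability at least $1-\frac{1}{16r}$, the graph $G[S,T]$ is $\frac{\delta}{2}$-profile-close to $G[S,B]$ on both sides, and at most $\gamma t<1$ (hence no) occurrences of $S$ are exceptional. Both profile bounds then follow from the triangle inequality through $G[S,B]$, using both profile conditions of the hypothesis. Avoiding the exceptional part of $B$ costs the remaining $\frac{1}{16r}$.

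You instead rederive the Hoeffding estimates directly; your union bound over the at most $t<\gamma^{-1}$ distinct vertices of $S$ plays the role of the paper's Markov step. On the $T$ side you route through the empirical tail $g$ of $d(\cdot,A)$ over $T$. Missing the exceptional set gives $\int|f_{T,S}-g|\le\frac{\delta}{2}$. Fresh grid estimates, with your cell-by-cell monotonicity bound in place of Lemma~\ref{lem:functions}, give $\int|g-f_{B,A}|\le\frac{\delta}{2}$.

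The paper's version is shorter and modular. Yours never uses the $B$-side profile closeness in the hypothesis, only its $A$-side profile condition and its pointwise exceptional-set bound. Note that on the $T$ side your budget $\frac{\delta}{2}+\frac{\delta}{2}$ is used exactly. The ``shifting the argument'' step must therefore give the sharp constant $\eta$, not the $2\eta$ a naive sandwich argument yields. The clean justification is the coupling inequality $\int_0^1\bigl|\frac1N\sum_x\mathbf 1_{a_x\ge\mu}-\frac1N\sum_x\mathbf 1_{b_x\ge\mu}\bigr|\,d\mu\le\frac1N\sum_x|a_x-b_x|$, which the paper records in the proof of Lemma~\ref{lem:semi-disc}.
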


\begin{lemma}\label{lem:disc-comb}
Let $r$ be a positive integer and $0<\varepsilon\le1$.  Let $G$ be a bipartite graph on $(A,B)$, and let $S_1,\dots,S_r$ be multisets of vertices of $A$, and let $T_1,\dots,T_r$ be multisets of vertices of $B$, all of a common positive size $t$. Suppose that $(S_i,T_j)$ is an $\eta$-discretization of $G$ whenever $i\ne j$, where $\eta=\frac{\varepsilon}{4}$.  If $S$ and $T$ are the multiset unions of the $S_i$ and $T_i$, respectively, and $r\ge\frac{4}{\varepsilon}$, then $(S,T)$ is an $\varepsilon$-discretization of $G$.
\end{lemma}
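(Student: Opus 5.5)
We must prove Lemma~\ref{lem:disc-comb}. The plan is to view $G[S,T]$ as the union of the $r^2$ blocks $G[S_i,T_j]$, all of size $t\times t$. The $r(r-1)$ off-diagonal blocks are $\eta$-discretizations, and the $r$ diagonal blocks $(S_i,T_i)$ carry no information. Every bound will come from averaging over the controlled blocks and charging the uncontrolled ones a weight of $1/r\le\varepsilon/4$.

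\textbf{Pointwise conditions.} Fix an occurrence $v\in S_i$. Then
\[
d(v,T)=\frac1r\sum_j d(v,T_j).
\]
For each $j\ne i$ we have $|d(v,T_j)-d(v,B)|\le\eta$, while the single term $j=i$ can deviate by at most $1$. Hence
\[
|d(v,T)-d(v,B)|\le\eta+\frac1r\le\frac{\varepsilon}{4}+\frac{\varepsilon}{4}\le\varepsilon.
\]
The argument for $w\in T$ is symmetric.

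\textbf{Profile condition on the $A$ side.} We must compare $f_{S,T}$ with $f_{A,B}$. Introduce an intermediate function: for $v\in S_i$, let $\tilde d(v)=\frac{1}{r-1}\sum_{j\ne i}d(v,T_j)$. Then $|d(v,T)-\tilde d(v)|\le 1/r$. Shifting every degree by at most $\alpha$ changes the tail function by at most the shift in the $L^1$ sense, because
\[
\int_0^1|f-g|\,d\mu\le\frac{1}{|S|}\sum_v|d_1(v)-d_2(v)|.
\]
So it suffices to compare the profile of $\tilde d$ on $S$ with $f_{A,B}$, at a cost of $1/r$.

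\textbf{Comparing $\tilde d$ with $f_{A,B}$.} Each $v\in S_i$ satisfies $|d(v,T_j)-d(v,B)|\le\eta$ for every $j\ne i$, so $|\tilde d(v)-d(v,B)|\le\eta$. This costs another $\eta$ and reduces the problem to comparing the profile of $d(\cdot,B)$ on $S$ with $f_{A,B}$.

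That profile is the average over $i$ of the profiles of $d(\cdot,B)$ on $S_i$. The profile of $d(\cdot,B)$ on $S_i$ is within $\eta$ of the profile of $d(\cdot,T_j)$ on $S_i$ for any $j\ne i$, by the pointwise condition and the same $L^1$ shift bound. The profile of $d(\cdot,T_j)$ on $S_i$ is in turn $\eta$-close to $f_{A,B}$ by profile-closeness of the block $(S_i,T_j)$. So each $S_i$ contributes an error of at most $2\eta$, and averaging over $i$ (convexity of the $L^1$ norm) preserves this.

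The total error is therefore $1/r+\eta+2\eta\le\varepsilon/4+3\varepsilon/4=\varepsilon$. The $B$ side follows by symmetry.

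The main obstacle is the bookkeeping in the profile step. Tail functions are not linear in the degrees, so we cannot simply average profiles over $j$. The $L^1$ shift inequality together with the averaging over $i$ is exactly what circumvents this. One must also check that $r\ge 2$, so that some $j\ne i$ exists; this holds because $r\ge4/\varepsilon\ge4$.
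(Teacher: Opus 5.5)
Your proposal is correct and follows the paper's proof. Both arguments average the pointwise bounds with a $1/r$ charge for the diagonal block, then apply the $L^1$ coupling inequality for tail functions together with averaging over the equal-size $S_i$, arriving at the same total profile error $3\eta+1/r\le\varepsilon$. The only difference is that the paper collapses your three-step chain $d(\cdot,T)\to\tilde d\to d(\cdot,B)\to d(\cdot,T_j)$ into the single pointwise estimate $|d(v,T_j)-d(v,T)|\le 2\eta+\frac1r$ for $v\in S_i$, $j\ne i$.
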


The first two lemmas are concentration statements.  The third combines the rounds by averaging degrees and degree profiles.  We prove them after deriving Theorem~\ref{thm:technical}.

\begin{proof}[Proof of Theorem~\ref{thm:technical}]
We may assume $0<\varepsilon\le1$ and $m\ge2$, since decreasing $\varepsilon$ or increasing $m$ only strengthens the conclusion.  Put
\(
  \eta=\frac{\varepsilon}{4}
\)
 and \(r=\left\lceil\frac{4}{\varepsilon}\right\rceil.
\)
Choose $t=\lceil C\varepsilon^{-2}\log(2/\varepsilon)\rceil$, where $C$ is a sufficiently large absolute constant, and set $\gamma=e^{-\eta^2t/16}$.  Then
$  \gamma\le\frac{\eta}{4}$,
  $\gamma t\le\frac{1}{16r}$,
  and 
  $2e^{-\eta^2t/8}\bigl(\lceil4/\eta\rceil+1+\gamma^{-1}\bigr)
  \le\frac{1}{16r}$.
These are precisely the common conditions needed for Lemma~\ref{lem:semi-disc} with $\delta=\eta/2$ and Lemma~\ref{lem:full-disc} with $\delta=\eta$.  Set
\(
  s=rt,
  \) and 
 \( n=16(r+1)t^2.
\)
Define integers $u_r=m$ and, recursively,
$  u_{q}=R_{u_{q+1}}(4u_{q+1}+1,u_{q+1})$, $0\le q<r$,
and take $k=u_0$.

We construct nested active index sets
\(
  I_0\supseteq I_1\supseteq\cdots\supseteq I_r
\) with \(|I_q|=u_q,
\)
and for each $i\in I_q$, disjoint $t$-sets $S_i^1,\dots,S_i^q\subseteq V_i$.  For $q\ge1$, the invariants after round $q$ are:
\begin{enumerate}[label=\textup{(A\arabic*)}]
  \item for $1\le p\le q$ and $i\ne j$ in $I_q$, $(S_i^p,V_j)$ is an $(\frac{\eta}{2},\gamma)$-semi-discretization of $G[V_i,V_j]$;
  \item for $1\le p<q$ and $i\ne j$ in $I_q$, $(S_i^p,S_j^q)$ is an $\eta$-discretization of $G[V_i,V_j]$.
\end{enumerate}
Once the construction reaches $I_r$, invariant \textup{(A2)}, with the two sides interchanged when necessary, supplies the required $\eta$-discretization for every pair of blocks selected in different rounds.  Lemma~\ref{lem:disc-comb} applied to
\(
  S_i=\bigcup_{q=1}^r S_i^q
\)
then shows that every pair $(S_i,S_j)$ with $i\ne j$ in $I_r$ is an $\varepsilon$-discretization.

Suppose that rounds $1,\dots,q$ have been completed, where $0\le q<r$.  For distinct $i,j\in I_q$, call a random ordered sample $S=(v_1,\dots,v_t)$ from $V_i$ \emph{good with respect to $j$} if it has no repeated vertex, is disjoint from $S_i^1\cup\cdots\cup S_i^q$, the pair $(S,V_j)$ is an $(\frac{\eta}{2},\gamma)$-semi-discretization, and $(S,S_j^p)$ is an $\eta$-discretization for every $1\le p\le q$.

The semi-discretization condition fails with probability at most $\frac{1}{16r}$.  For each $p\le q$, Lemma~\ref{lem:full-disc}, applied to the already constructed pair $(S_j^p,V_i)$ with the sides interchanged, shows that the corresponding full-discretization condition fails with probability at most $\frac{1}{8r}$.  Finally, since at most $qt\le rt$ vertices of $V_i$ have already been used, the probability that the ordered sample repeats a vertex or meets a previously selected vertex is at most
  $\frac{tqt+\binom t2}{|V_i|}
  \le\frac{(r+1)t^2}{n}
  \le\frac{1}{16}.$
A union bound therefore shows that $S$ is good with respect to any fixed $j$ with probability greater than $\frac{3}{4}$.

Set $u=u_{q+1}$.  If some $u$-set $U\subseteq I_q$ has the property that, for every $i\in U$, there is a sample $S_i^{q+1}\subseteq V_i$ good with respect to every $j\in U\setminus\{i\}$, then take $I_{q+1}=U$ and proceed.  Otherwise, for each $u$-set $U=\{\ell_1<\cdots<\ell_u\}$ choose the least position $B(U)\in[u]$ for which no such sample exists for $\ell_{B(U)}$.  This gives a $u$-coloring of the $u$-subsets of $I_q$.  By the definition of $u_q$, there are
\(
  h_1<\cdots<h_{4u+1}
\)
and $\alpha\in[u]$ such that $B(U)=\alpha$ for every $u$-subset of $\{h_1,\dots,h_{4u+1}\}$.

Sample an ordered $t$-tuple $S=(v_1,\dots,v_t)$ independently and uniformly with replacement from $V_{h_{2u+1}}$. Since $S$ is good with respect to each of the other $4u$ indices with probability greater than $\frac{3}{4}$, some realization is good with respect to at least $3u$ of them. A good realization has distinct entries, so we identify it with its underlying $t$-set. At least $u$ good indices lie on each side of $h_{2u+1}$.  Choose $\alpha-1$ from the left and $u-\alpha$ from the right.  Together with $h_{2u+1}$ they form a $u$-set $U$ in which the middle index occupies position $\alpha$, yet the chosen $S$ is good with respect to every other member of $U$.  This contradicts $B(U)=\alpha$.

Thus the construction continues through all $r$ rounds.  The sets $S_i$ for $i\in I_r$ are genuine subsets of size $s$ because the blocks selected within each class are disjoint, and the preceding application of Lemma~\ref{lem:disc-comb} completes the proof.
\end{proof}

We now prove the auxiliary lemmas, beginning with an elementary fact about tail functions.

\begin{lemma}\label{lem:functions}
    Let $f,g:[0,1]\rightarrow[0,1]$ be two non-increasing functions. Let $\tau\in(0,1)$. If $\int_0^1|f(x)-g(x)|dx\geq 2\tau,$ then there exists $x\in[0,1-\tau]$ such that $f(x+\tau)> g(x)+\tau$ or $g(x+\tau)> f(x)+\tau$.
\end{lemma}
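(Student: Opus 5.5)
We argue by contraposition.  Assume that for every $x\in[0,1-\tau]$ we have both $f(x+\tau)\le g(x)+\tau$ and $g(x+\tau)\le f(x)+\tau$.  We will show that $\int_0^1|f-g|<2\tau$.

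\textbf{Area representation.}  Let $P_f=\{(x,y)\in[0,1]^2:y\le f(x)\}$ and define $P_g$ analogously.  Then $\int_0^1|f-g|\,dx$ is the Lebesgue measure of the symmetric difference $P_f\triangle P_g$.  We slice the unit square by the diagonal lines $L_c=\{(x,y):x-y=c\}$ for $c\in[-1,1]$.  Parametrize $L_c\cap[0,1]^2$ by the $x$-coordinate $u$, and write $\ell(c)$ for its $u$-length.  Since the map $(u,c)\mapsto(u,u-c)$ has Jacobian $1$,
\[
\mathrm{meas}(P_f\triangle P_g)=\int_{-1}^{1}\mathrm{len}_u\bigl(L_c\cap(P_f\triangle P_g)\bigr)\,dc .
\]
Here $\ell(c)=1-|c|$.

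\textbf{Structure of each slice.}  Along $L_c$, as $u$ increases the point moves by $(+1,+1)$: the height $y$ increases while $f(x)$ does not.  Hence the condition $y\le f(x)$ holds on an initial segment of $L_c\cap[0,1]^2$, and the same holds for $g$.  Two initial segments have as symmetric difference a single interval.  Say it is $\{(x,y):g(x)<y\le f(x)\}$, the case with $f,g$ exchanged being symmetric, and that it starts at the point $(x_0,y_0)$.

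\textbf{Key bound.}  We claim the interval has $u$-length at most $\tau$.
\begin{itemize}
\item If $x_0>1-\tau$, this is immediate, since the slice ends at $x=1$.
\item Otherwise $x_0\in[0,1-\tau]$, and $y_0\ge g(x_0)$ up to endpoint conventions, which do not affect measure.  Suppose some point $(x_0+\tau',\,y_0+\tau')$ with $\tau'>\tau$ still lies in the interval.  Monotonicity of $f$ and the hypothesis give
\[
f(x_0+\tau')\le f(x_0+\tau)\le g(x_0)+\tau\le y_0+\tau<y_0+\tau'.
\]
This contradicts $y_0+\tau'\le f(x_0+\tau')$.
\end{itemize}
So every slice contributes at most $\min(\tau,\,1-|c|)$.

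\textbf{Conclusion.}  Integrating over $c$,
\[
\int_0^1|f-g|\le\int_{-1}^{1}\min(\tau,1-|c|)\,dc=2\tau-\tau^2<2\tau .
\]
This contradicts the assumption $\int_0^1|f-g|\ge2\tau$, so one of the two strict inequalities must hold at some $x\in[0,1-\tau]$.

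\textbf{Main obstacle.}  The delicate step is the key bound.  One must check that the symmetric difference on each diagonal really is a single interval; this uses that both functions are non-increasing along the $(+1,+1)$ direction.  One must also handle the boundary cases (a start point with $x_0>1-\tau$, or $y$ leaving $[0,1]$) and the closed/open endpoint conventions.  These only affect sets of measure zero, or are absorbed by the bound $\ell(c)=1-|c|$, which is also what makes the final inequality strict.
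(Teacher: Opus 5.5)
Your argument is correct, and it takes a genuinely different route from the paper. The paper works in one dimension. With $F=\max\{f,g\}$ and $G=\min\{f,g\}$, it shifts the integral to get
\[
\int_0^1(F-G)=\int_0^\tau\bigl(F(x)-G(1-x)\bigr)\,dx+\int_0^{1-\tau}\bigl(F(x+\tau)-G(x)\bigr)\,dx .
\]
It bounds the first term by $\tau$ and picks a point where the second integrand is at least $\tau/(1-\tau)>\tau$. Monotonicity of each function then forces the maximum at $x+\tau$ and the minimum at $x$ to come from different functions, which gives one of the two inequalities.

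You instead read $\int_0^1|f-g|$ as the area between the hypographs and slice it along diagonals. The negated conclusion then becomes a bound of $\tau$ on every diagonal gap; this is essentially the classical comparison of a L\'evy-type distance with the $L^1$ distance for monotone functions. Both routes give the same quantitative statement: $\int_0^1|f-g|\le 2\tau-\tau^2$ whenever neither inequality ever holds. Your version explains geometrically where the hypothesis comes from. The paper's is shorter and avoids Fubini, the change of variables, and all endpoint issues.

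One step in your key bound should be fixed precisely rather than by appeal to ``measure zero''. The claim there concerns the length of a single interval $J$, so measure-zero arguments do not apply directly. Suppose the infimum $x_0$ of $J$ lies in the $g$-segment, so that $J$ is open at $x_0$; this happens, for example, when $g$ jumps down just after $x_0$. Then $y_0\ge g(x_0)$ can fail. The fix is to run your chain from any $x_1\in J$ with $x_0<x_1\le 1-\tau$, where $g(x_1)<y_1$ holds strictly. This gives $J\subseteq[x_0,x_1+\tau]$, and letting $x_1\downarrow x_0$ yields length at most $\tau$. The boundary case $x_0=1-\tau$ is already covered by your trivial bound.
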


\begin{proof}
    Let $F(x)=\max\{f(x),g(x)\}$ and $G(x)=\min\{f(x),g(x)\}$. It suffices to find $x$ such that $F(x+\tau)>G(x)+\tau$. We have \[2\tau\leq\int_0^1(F(x)-G(x))dx=\int_0^\tau (F(x)-G(1-x))dx+\int_0^{1-\tau}(F(x+\tau)-G(x))dx.\]

    On the right-hand side, the term inside the first integral is at most 1, so the integral is at most $\tau$. The second integral therefore evaluates to at least $\tau$, and so there exists $x$ such that $F(x+\tau)-G(x)\geq\frac{\tau}{1-\tau}>\tau$.
\end{proof}

\begin{proof}[Proof of Lemma~\ref{lem:semi-disc}]
For $\mu\in[0,1]$, let
\[
  f(\mu)=\frac{|\{a\in A:d(a,B)\ge\mu\}|}{|A|},
  \qquad
  g(\mu)=\frac{|\{a\in S:d(a,B)\ge\mu\}|}{|S|},
\]
where occurrences in the multiset $S$ are counted separately.  For each grid point $\frac{i}{z}$, the quantity $g(\frac{i}{z})$ is the average of $t$ independent Bernoulli random variables with mean $f(\frac{i}{z})$. Hence, the Chernoff--Hoeffding inequality gives
  $\Pr\left(\left|f(\frac{i}{z})-g(\frac{i}{z})\right|>\frac{\delta}{2}\right)
  \le 2e^{-\frac{\delta^2t}{2}}.$
If the two non-increasing tail functions $f(\mu)$ and $g(\mu)$  have $L^1$-distance greater than $\delta$, by Lemma~\ref{lem:functions} with $\tau=\frac{\delta}{2}$, there exists $x\in[0,1-\tau]$ such that, say, $f(x+\tau)>g(x)+\tau$. Taking $i=\lceil zx\rceil$ we have $x\le \frac{i}{z}\le x+\tau$ as $\frac{1}{z}\le\tau$. Monotonicity then gives
$f(\frac{i}{z})\ge f(x+\tau)>g(x)+\tau\ge g(\frac{i}{z})+\tau$,
so $|f(\frac{i}{z})-g(\frac{i}{z})|>\frac{\delta}{2}$ at a grid point. The other case is symmetric. Hence the first profile condition fails with probability at most $(z+1)2e^{-\frac{\delta^2t}{2}}$. That is,  $\Pr\left(\int_0^1\left|f(\mu)-g(\mu)\right|d\mu > \delta\right) \le (z+1)2e^{-\frac{\delta^2t}{2}}.$

Let $X\subseteq B$ be the set of vertices in $B$ such that $|d(b,S)-d(b,A)|>\frac{\delta}{2}$. Choose a vertex $b\in B$. Applying the Chernoff--Hoeffding inequality to the indicators of adjacency to $b$ gives $\Pr(b\in X)\leq 2e^{-\frac{\delta^2t}{2}}$. By Markov's inequality, we have 
$  \Pr(|X|>\gamma|B|)
  \le \frac{2e^{-\frac{\delta^2t}{2}}}{\gamma}.$
On the complementary event, all but $\gamma|B|$ vertices have the required degree approximation.  Moreover, for any two indexed families $(a_x)$ and $(b_x)$ in $[0,1]$,
\[
  \int_0^1\left|\frac{1}{N}\sum_x\mathbf1_{a_x\ge\mu}
  -\frac{1}{N}\sum_x\mathbf1_{b_x\ge\mu}\right|\,d\mu
  \le \frac{1}{N}\sum_x|a_x-b_x|.
\]
Applying this with $a_b=d(b,A)$ and $b_b=d(b,S)$ shows that the second profile distance is at most $\frac{\delta}{2}+\gamma\le\delta$.  The stated numerical assumption makes the total failure probability at most $\frac{1}{16r}$.
\end{proof}

\begin{proof}[Proof of Lemma~\ref{lem:full-disc}]
Apply Lemma~\ref{lem:semi-disc} with parameter $\frac{\delta}{2}$ to the bipartite graph $G[B,S]$, treating the occurrences of $S$ as distinct vertices.  With probability at least $1-\frac{1}{16r}$, the pair $(T,S)$ is a $(\frac{\delta}{2},\gamma)$-semi-discretization of $G[B,S]$.  Consequently, $G[S,T]$ is $\frac{\delta}{2}$-profile-close to $G[S,B]$, which is itself $\frac{\delta}{2}$-profile-close to $G[A,B]$.  The triangle inequality gives the required $\delta$-profile closeness.

The same event leaves at most $\gamma|S|=\gamma t<1$ occurrences $v\in S$ with
\(
  |d(v,T)-d(v,B)|>\frac{\delta}{2}.
\)
Thus there are no such occurrences.  In the original semi-discretization $(S,B)$, at most $\gamma|B|$ vertices $b\in B$ satisfy
\(
  |d(b,S)-d(b,A)|>\frac{\delta}{2}.
\)
The probability that the random multiset $T$ meets this exceptional set is at most $\gamma t\le\frac{1}{16r}$.  Outside these two failure events, all pointwise degree conditions and both profile conditions hold, so $(S,T)$ is a $\delta$-discretization.  The total failure probability is at most $\frac{1}{8r}$.
\end{proof}

\begin{proof}[Proof of Lemma~\ref{lem:disc-comb}]
For an occurrence $v\in S_i$, all blocks $T_j$ except $T_i$ approximate its degree into $B$ to within $\eta$.  Averaging gives
$  |d(v,T)-d(v,B)|\le\eta+\frac1r\le\frac{\varepsilon}{2}.$
The symmetric bound holds for occurrences in $T$. To compare profiles, use the elementary coupling inequality
\[
  \int_0^1\left|\frac1N\sum_x\mathbf1_{a_x\ge\mu}
  -\frac1N\sum_x\mathbf1_{b_x\ge\mu}\right|\,d\mu
  \le\frac1N\sum_x|a_x-b_x|
\]
for indexed families in $[0,1]$.  Fix $i$ and choose $j\ne i$.  The profile of $S_i$ into $T_j$ is within $\eta$ of that of $A$ into $B$.  Moreover, for every $v\in S_i$,
$|d(v,T_j)-d(v,T)|\le2\eta+\frac1r$.
Thus the profile of $S_i$ into $T$ is within $3\eta+\frac1r\le\varepsilon$ of the original profile.  Since all $S_i$ have the same size, averaging their tail functions gives the same bound for the profile of $S$ into $T$.  Interchanging the two sides proves the other profile bound.
\end{proof}

\subsection{Finite-support and obstruction lemmas}

\begin{proof}[Proof of Proposition~\ref{prop:acyclic-lagrangian}]
If $\Hcal$ contains an edgeless digraph, then $\pi(\Hcal)=0$, realized by the Lagrangian of the one-vertex loopless digraph.  Otherwise, every $\Hcal$-homomorphism-free host is loopless, since a loop receives a constant homomorphism from every member of $\Hcal$.  Let $\mathscr G(\Hcal)$ be the class of non-empty finite loopless digraphs receiving no such homomorphism.  Lemmas~\ref{lem:lag-basic} and~\ref{lem:lag-rational} give
\begin{equation}\label{eq:hom-density-sup-lag}
  \pi(\Hcal)=\sup_{G\in\mathscr G(\Hcal)}\lambda(G).
\end{equation}
Indeed, $\dens(G)\le\lambda(G)$, while $\lambda(G)$ is the density of a finite blowup projecting to $G$ and hence still belonging to $\mathscr G(\Hcal)$.

For $G\in\mathscr G(\Hcal)$, choose an optimal weight vector $x$ with minimal support $S$.  If $u,v\in S$ are nonadjacent in both directions, the objective is affine in $x_u$ when $x_u+x_v$ and all other weights are fixed.  Moving their combined weight to one endpoint cannot decrease it, contradicting minimality of $S$.  Thus $Q=G[S]$ is semicomplete and $\lambda(Q)=\lambda(G)$.

Choose $H_0\in\Hcal$ with $|V(H_0)|=h$.  A topological ordering embeds $H_0$ into $T_h$, so every member of $\mathscr G(\Hcal)$ is $T_h$-free.  Every semicomplete digraph on at least $2^{h-1}$ vertices contains $T_h$: choose one direction on each pair and use the standard induction giving a transitive $h$-vertex subtournament in every tournament of that order.  Hence $|V(Q)|<2^{h-1}$.

The supremum in \eqref{eq:hom-density-sup-lag} is therefore a maximum over finitely many semicomplete $\Hcal$-homomorphism-free digraphs.  Any maximizing member has the required properties.
\end{proof}

\begin{proof}[Proof of Lemma~\ref{lem:layered}]
Suppose that $\varphi:(H[m_H])^{\preceq}\to D_0$ is a homomorphism.  Each layer $i$ gives a map $p_i:V(H)\to V(D_0)$, defined by $p_i(v)=\varphi(v,i)$.  There are fewer possible maps than layers, so $p_i=p_j=:p$ for some $i<j$.

For every arc $uv\in E(H)$, the half-blowup contains the edge $(u,i)(v,j)$.  If layer $i$ precedes layer $j$ under $\preceq$, this edge forces $p(u)p(v)\in E(D_0)$, giving $H\to D_0$.  If their order is reversed, it forces $p(v)p(u)\in E(D_0)$, giving $H^{\opp}\to D_0$.  Both contradict the hypothesis.
\end{proof}

\section{Proof of the main density results and applications}\label{sec:main-proofs}

\subsection{The half-blowup theorem and the directed reduction}

We first use low-resolution sparsification and Ramsey's theorem to obtain a directed template of bounded size.  Sampling this template once more gives the optimal order $\lceil\varepsilon^{-1}\rceil$; integrality of the number of arcs absorbs the small density loss in the first step.

\begin{proof}[Proof of Theorem~\ref{thm:blowup}]
For $\varepsilon\ge1$, use the one-vertex edgeless digraph.  Assume $0<\varepsilon<1$, and set
  $t=\lceil\varepsilon^{-1}\rceil$,
$\gamma=\frac{1}{2t^2(t-1)}$ and 
$m_0=\max\{m,2\}.$
Let $s=s(\gamma)$ come from Corollary~\ref{cor:main}, put $m'=R(m_0,2^{s^2})$, and take $k=k(\gamma,m')$ from that corollary.  Applied to $G$, it gives $s$-element multisets $S_1,\dots,S_{m'}$ in distinct color classes, with every pair density at least $d-\gamma$.

Label the occurrences of $S_i$ by $v_i^1,\dots,v_i^s$.  Color each pair $i<j$ by the matrix
$W_{ij}=\{(a,b)\in[s]^2:v_i^a v_j^b\in E(G)\}$.
Ramsey's theorem gives $m_0$ classes with a common matrix $W$.  The possibly looped digraph $D$ on $[s]$ with arc set $W$ satisfies $\dens(D)\ge d-\gamma$ and admits a colored homomorphism $D[m_0]\to G$.

It remains to find a $t$-vertex digraph $R$ with $R\to D$ and $\dens(R)\ge d-\varepsilon$.  If $D$ has a loop, take the complete looped digraph on $t$ vertices and map it to that loop.  Otherwise, for each ordered $t$-tuple $(X_1,\dots,X_t)$ of vertices of $D$, allow repetitions and define $R$ on $[t]$ by $ij\in E(R)$ if and only if $X_iX_j\in E(D)$.  Every such $R$ is loopless and maps to $D$.  Let $a$ be the maximum number of arcs among these digraphs.  Uniform independent sampling gives
$  a\ge t(t-1)\dens(D).$
Suppose that $a/t^2<d-\varepsilon$.  Since $d\le1$ and $\varepsilon\ge1/t$, integrality gives $a\le t(t-1)-1$.  Consequently,
\[
  d-\dens(D)
  >\frac{a}{t^2}+\frac1t-\frac{a}{t(t-1)}
  =\frac1t-\frac{a}{t^2(t-1)}
  \ge\frac{1}{t^2(t-1)},
\]
contrary to $\dens(D)\ge d-\gamma$.  A maximizing $R$ therefore has the required density.  Composing $R[m]\to D[m]\to G$ completes the proof.
\end{proof}

\begin{remark}[Sharpness of the template order]\label{rem:sharp-template}
Fix $0<\varepsilon<1$ and put $q=\lceil\varepsilon^{-1}\rceil-1$, so that $\frac1q>\varepsilon$.  Choose an integer $N$ with $\frac1N<\frac1q-\varepsilon$, and let $m=N+1$. We write $\overleftrightarrow K_N$ for the loopless complete bidirected digraph on $N$ vertices.  For every $k\ge m$, the graph $G=\overleftrightarrow K_N[k]$ is $d$-dense with $d=1-\frac1N$, and its underlying graph is $N$-partite.
If $R[m]\to G$ coloredly, then $R$ cannot have a loop, since its $m$ copies would form a $K_{N+1}$.  A loopless $R$ on at most $q$ vertices has density at most $1-\frac1q<d-\varepsilon$.  Thus no smaller bound on the template order works uniformly in $m$.
\end{remark}

\begin{proof}[Proof of Theorem~\ref{thm:digraph}]
An edgeless member gives value zero on both sides, so assume that every member of $\Fcal$ has an edge.  Put $\Hcal=\Dcal(\Fcal)$ and $\alpha=\pi(\Hcal)$.  If a finite digraph $D$ receives no homomorphism from any member of $\Hcal$, then Lemma~\ref{lem:half-blowup-test} shows that $D[k]$ receives no colored homomorphism from any member of $\Fcal$, for every sufficiently large $k$.  Hence $\pc(\Fcal)\ge\dens(D)$, and taking the supremum gives $\pc(\Fcal)\ge\alpha$.

Conversely, fix $\varepsilon>0$ and let $m$ be the maximum number of colors in a member of $\Fcal$.  Theorem~\ref{thm:blowup} gives $k$ such that every $(\alpha+2\varepsilon)$-dense $k$-colored graph $G$ receives a colored homomorphism from $D[m]$, where $\dens(D)\ge\alpha+\varepsilon$.  By the definition of $\alpha$, some $F^{\preceq}\in\Hcal$ maps to $D$.  Lemma~\ref{lem:half-blowup-test} then gives $F\to D[m]\to G$.  Thus $\pc(\Fcal)\le\alpha+2\varepsilon$; when $\alpha+2\varepsilon>1$ this bound is automatic.  Letting $\varepsilon\downarrow0$ proves the other inequality.
\end{proof}

\subsection{Finite forbidden families and exact realization}

The finite-support proposition gives one inclusion in the value-set theorem.  We prove the reverse inclusion by a direct finite construction.

\begin{theorem}\label{thm:exact-realization}
For every non-empty finite loopless digraph $D_0$, there exists a non-empty finite colored graph family $\Fcal$ such that $\pc(\Fcal)=\lambda(D_0)$.
\end{theorem}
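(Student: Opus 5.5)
The plan is to build $\Fcal$ from two kinds of obstructions: a rainbow clique, which bounds the size of extremal templates, and layered half-blowups of the finitely many small templates whose Lagrangian exceeds $\lambda(D_0)$. We then compute $\pi(\Dcal(\Fcal))$ and conclude with Theorem~\ref{thm:digraph}.

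Let $n_0=|V(D_0)|$ and let $K$ be the rainbow clique $K_{n_0+1}$. Every orientation in $\Dcal(K)$ is the transitive tournament $T_{n_0+1}$. A homomorphism from $T_{n_0+1}$ to a loopless digraph must be injective, because any two vertices are joined by an arc and their images must span an arc, hence be distinct. So $T_{n_0+1}\nrightarrow D_0$.

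Next let $\mathcal Q$ be the finite set, up to isomorphism, of semicomplete loopless digraphs $H$ with $|V(H)|<2^{n_0}$ and $\lambda(H)>\lambda(D_0)$. For $H\in\mathcal Q$, Lemma~\ref{lem:lag-basic}(ii) and (iii) give $H\nrightarrow D_0$ and $H^{\opp}\nrightarrow D_0$. Lemma~\ref{lem:layered} then shows that the colored graph $H[m_H]$ has no orientation $(H[m_H])^{\preceq}$ mapping to $D_0$. Set
\[
\Fcal=\{K\}\cup\{H[m_H]:H\in\mathcal Q\}.
\]
Every member of $\Fcal$ has an edge, since each $H\in\mathcal Q$ has positive Lagrangian and therefore an arc.

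\emph{Lower bound.} By construction, no member of $\Dcal(\Fcal)$ maps to $D_0$. By Lemma~\ref{lem:lag-rational}, $\lambda(D_0)$ is the density of a finite blowup $B$ of a subdigraph of $D_0$ with $B\to D_0$. Then no member of $\Dcal(\Fcal)$ maps to $B$ either, since such a map would compose with $B\to D_0$. Hence $\pi(\Dcal(\Fcal))\ge\lambda(D_0)$.

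\emph{Upper bound.} Apply Proposition~\ref{prop:acyclic-lagrangian} to $\Dcal(\Fcal)$, a finite family of acyclic digraphs each having an arc. It gives $\pi(\Dcal(\Fcal))=\lambda(Q)$ for a semicomplete loopless $Q$ that receives no homomorphism from $\Dcal(\Fcal)$.

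\begin{itemize}
\item In particular $T_{n_0+1}\nrightarrow Q$. Every semicomplete digraph on at least $2^{n_0}$ vertices contains $T_{n_0+1}$, by the argument in the proof of that proposition. Hence $|V(Q)|<2^{n_0}$. (I use this directly rather than the stated bound $2^{h-1}$, because the minimum member size $h$ might exceed $n_0+1$.)
\item Suppose $\lambda(Q)>\lambda(D_0)$. Then $Q\in\mathcal Q$, so $Q[m_Q]\in\Fcal$. The identity map is a colored homomorphism $Q[m_Q]\to Q[m_Q]$. By Lemma~\ref{lem:half-blowup-test}, some orientation $(Q[m_Q])^{\preceq}$ maps to $Q$, which contradicts the choice of $Q$.
\end{itemize}
Therefore $\lambda(Q)\le\lambda(D_0)$, and Theorem~\ref{thm:digraph} gives $\pc(\Fcal)=\lambda(D_0)$.

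The main point needing care is the size bound on $Q$, which is what makes $\mathcal Q$ finite; this rests entirely on the rainbow clique. The degenerate case where $D_0$ has no arcs also fits the scheme: there $\lambda(D_0)=0$, and $K=K_{n_0+1}$ (containing $K_2$ when $n_0=1$) already forces density $0$.
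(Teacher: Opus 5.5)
Your proof is correct and follows the paper's argument essentially step for step. You use the same family $\{K_{n_0+1}^{\mathrm{rb}}\}\cup\{H[m_H]:H\in\mathcal Q\}$, the same lower bound via a blowup of $D_0$, and the same upper bound via Proposition~\ref{prop:acyclic-lagrangian} together with the natural-order orientation of $Q[m_Q]$. Your explicit check that every member of $\Fcal$ has an edge is a welcome detail that the paper leaves implicit.

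One small correction: your parenthetical worry about $h$ is unfounded. Since $T_{n_0+1}$ itself lies in $\Dcal(\Fcal)$, we have $h\le n_0+1$, so the proposition's bound $2^{h-1}\le 2^{n_0}$ already suffices. Your direct argument is equally valid.
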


\begin{proof}
Put $n=|V(D_0)|$ and $\gamma=\lambda(D_0)$.  Let $\mathcal Q$ contain one representative of each isomorphism type of non-empty loopless semicomplete digraph $Q$ satisfying
$  |V(Q)|<2^n$
and 
$  \lambda(Q)>\gamma.$
This is a finite family.  For every $Q\in\mathcal Q$, Lemma~\ref{lem:lag-basic} gives $Q\nrightarrow D_0$ and $Q^{\opp}\nrightarrow D_0$.  Choose $m_Q=n^{|V(Q)|}+1$ as in Lemma~\ref{lem:layered}, and define
$  \Fcal=\{K_{n+1}^{\mathrm{rb}}\}
  \cup\{Q[m_Q]:Q\in\mathcal Q\},$
where $K_{n+1}^{\mathrm{rb}}$ denotes the rainbow clique.

No orientation arising from a member of $\Fcal$ maps to $D_0$.  This follows from Lemma~\ref{lem:layered} for the half-blowups; for the rainbow clique, every orientation is a transitive tournament on $n+1$ vertices, which cannot map to a loopless $n$-vertex digraph.  By Lemma~\ref{lem:lag-rational}, some finite blowup $B\to D_0$ has density exactly $\gamma$.  It also avoids every member of $\Dcal(\Fcal)$, so Theorem~\ref{thm:digraph} gives $\pc(\Fcal)\ge\gamma$.

For the reverse inequality, Proposition~\ref{prop:acyclic-lagrangian} gives a loopless semicomplete $\Dcal(\Fcal)$-homomorphism-free digraph $R$ with $\lambda(R)=\pc(\Fcal)$.  Since a rainbow $K_{n+1}$ is forbidden, the proposition gives $|V(R)|<2^n$.  If $\lambda(R)>\gamma$, then $R$ is isomorphic to a member $Q\in\mathcal Q$.  But orienting $Q[m_Q]$ in the natural order of its layers and projecting onto $Q$ gives
$ (Q[m_Q])^{<}\to Q\cong R$,
contrary to the choice of $R$.  Hence $\pc(\Fcal)=\lambda(R)\le\gamma$, proving equality.
\end{proof}

\begin{proof}[Proof of Theorem~\ref{thm:value-set}]
By Theorem~\ref{thm:digraph}, every finite colored Tur\'an density is the homomorphism Tur\'an density of a finite acyclic digraph family.  Proposition~\ref{prop:acyclic-lagrangian} therefore gives $\Picol\subseteq\Ldig$.  Theorem~\ref{thm:exact-realization} gives the reverse inclusion.
\end{proof}

\begin{proof}[Proof of Corollary~\ref{cor:rational}]
Theorem~\ref{thm:value-set} and Lemma~\ref{lem:lag-rational} give rationality.  More precisely, when every forbidden member has an edge, Proposition~\ref{prop:acyclic-lagrangian} gives a $\Dcal(\Fcal)$-homomorphism-free template $Q$ attaining the value.  An optimal rational blowup $B\to Q$ remains homomorphism-free and has density exactly $\lambda(Q)=\pc(\Fcal)$.  By Lemma~\ref{lem:half-blowup-test}, every sufficiently large half-blowup $B[k]$ avoids $\Fcal$ and has that density.  The template $Q$ has fewer than $2^{h-1}$ vertices, where $h$ is the smallest order of a forbidden member.
\end{proof}

\begin{proof}[Proof of Property~\ref{property:no-right}]
By Theorem~\ref{thm:digraph} and Lemma~\ref{lem:hom-sub}, $\Picol$ is a subset of the well-ordered set in Theorem~\ref{thm:BES}.  Fix $\alpha<1$.  If $\Picol\cap(\alpha,1)$ is empty, take any $0<\varepsilon<1-\alpha$.  Otherwise this set has a least element $\beta>\alpha$, and $\varepsilon=(\beta-\alpha)/2$ has the required property.
\end{proof}

\subsection{Forbidding a single colored graph}

\begin{proof}[Proof of Corollary~\ref{cor:jump}]
Suppose first that $F$ does not admit a colored homomorphism into the half-graph on its color set.  The same is then true for every larger half-graph, because restricting to the image layers recovers a half-graph on the colors of $F$.  Since half-graphs are $\frac{1}{4}$-dense, $\pc(F)\ge\frac{1}{4}$.

Conversely, suppose that $F$ admits a colored homomorphism into a half-graph.  The order of the image layers induces an ordering $\preceq$ of the colors of $F$.  In $F^{\preceq}$, every vertex mapped to an $a$-vertex of the half-graph is a source and every vertex mapped to a $b$-vertex is a sink.  Hence $F^{\preceq}$ maps to the one-arc digraph.  Every digraph with at least one arc receives a homomorphism from this one-arc digraph, so Theorem~\ref{thm:digraph} gives $\pc(F)=0$.

To attain $\frac{1}{4}$, let $F$ be the path $x_1x_2x_3x_4x_5$ with color sequence $1-2-3-1-2$.  Under any ordering of the three colors, exactly one color lies between the other two.  The internal vertex of that color---$x_2$, $x_3$, or $x_4$---then has both an in-neighbor and an out-neighbor in $F^{\preceq}$.  Thus no $F^{\preceq}$ maps to the one-arc digraph, and $F$ maps to no half-graph.  Therefore $\pc(F)\ge\frac{1}{4}$.

For the reverse inequality, use the order $1\prec2\prec3$.  The resulting orientation $F^{\preceq}$ maps to the directed path $\vec P_3$.  Let $D$ be loopless and have density greater than $\frac{1}{4}$.  If every vertex of $D$ were a source or a sink, all arcs would go from a set of sources to a set of sinks, giving at most $\frac{|V(D)|^2}{4}$ arcs.  Hence $D$ contains a vertex with both positive in-degree and positive out-degree, and therefore receives a homomorphism from $\vec P_3$.  A looped host receives a homomorphism from $F^{\preceq}$ by collapsing all vertices to a loop.  Thus $\pi(\Dcal(F))\le\frac{1}{4}$, and Theorem~\ref{thm:digraph} yields $\pc(F)=\frac{1}{4}$.
\end{proof}

\begin{proof}[Proof of Corollary~\ref{cor:chromatic}]
Let $\ell=\chi(F)\ge2$ and let $\overleftrightarrow K_t$ denote the loopless complete bidirected digraph on $t$ vertices.  No orientation $F^{\preceq}$ maps to $\overleftrightarrow K_{\ell-1}$, since such a homomorphism would be a proper $(\ell-1)$-coloring of the underlying graph of $F$.  Therefore
\(
  \pc(F)=\pi(\Dcal(F))
  \ge \dens(\overleftrightarrow K_{\ell-1})
  =1-\frac{1}{\ell-1}.
\)

Set
\(
  \alpha=1-\frac{1}{2(\ell-1)}.
\)
Every looped digraph receives a homomorphism from $F^{\preceq}$.  If a loopless $n$-vertex digraph $D$ has density at least $\alpha$, and $b(D)$ denotes its number of bidirected unordered pairs, then
\[
  b(D)\ge e(D)-\binom n2
  \ge \alpha n^2-\binom n2
  >\left(1-\frac{1}{\ell-1}\right)\frac{n^2}{2}.
\]
Tur\'an's theorem therefore gives a bidirected clique of order $\ell$, and every $F^{\preceq}$ maps to this clique.

This excludes every $\Dcal(F)$-free digraph of density at least $\alpha$.  To obtain the strict inequality for the supremum, apply Proposition~\ref{prop:acyclic-lagrangian}: there is a finite loopless digraph $Q$ with
\(
  \pi(\Dcal(F))=\lambda(Q).
\)
By Lemma~\ref{lem:lag-rational}, $\lambda(Q)$ is the exact density of a finite blowup $B$ of a subgraph of $Q$.  The digraph $B$ is still $\Dcal(F)$-free.  If $\lambda(Q)\ge\alpha$, the preceding paragraph applied to $B$ gives a contradiction.  Hence $\pc(F)<\alpha$.

For sharpness, let $t$ be positive and let $F_{\ell,m}=K_\ell\times K_m$ have vertex set $[\ell]\times[m]$, with $(i,j)$ adjacent to $(i',j')$ exactly when $i\ne i'$ and $j\ne j'$.  Give $(i,j)$ color $j$.  For $m\ge\ell$, the underlying graph has chromatic number $\ell$.  Let $D_{\ell-1,t}$ have vertex set $[\ell-1]\times[t]$, with an arc from $(a,b)$ to $(a',b')$ whenever either $a\ne a'$ or $a=a'$ and $b<b'$.  Its density is
\(
  \dens(D_{\ell-1,t})
  =1-\frac{1}{2(\ell-1)}-\frac{1}{2(\ell-1)t}.
\)

Take
\(
  m>((\ell-1)t)^\ell.
\)
All members of $\Dcal(F_{\ell,m})$ are isomorphic, so order the colors naturally.  Suppose that $f:F_{\ell,m}^{\preceq}\to D_{\ell-1,t}$ were a homomorphism.  There are only $((\ell-1)t)^\ell$ possible image profiles of a color class, so two colors $j<j'$ have the same profile:
\(
f(i,j)=f(i,j')\) for every 
\(i\in[\ell].
\)
Among the $\ell$ images $f(i,j)$, two, say
\(
  f(i,j)=(a,b)\), and \( f(i',j)=(a,b'),
\)
have the same first coordinate.  Relabel $i,i'$ so that $b\ge b'$.  The arc $(i,j)\to(i',j')$ must then map to $(a,b)\to(a,b')$, which is not an arc of $D_{\ell-1,t}$.  This contradiction shows that $D_{\ell-1,t}$ is $\Dcal(F_{\ell,m})$-free.  Hence
\[
  \pc(F_{\ell,m})
  \ge1-\frac{1}{2(\ell-1)}-\frac{1}{2(\ell-1)t}.
\]
Letting $t\to\infty$ proves the accumulation statement.
\end{proof}

\subsection{An explicit non-principal pair}

\begin{proof}[Proof of Corollary~\ref{cor:nonprincipal}]
Let $F_1=K_2\times K_{17}$ have vertices $a_i,b_i$ of color $i$, for $i\in[17]$, and edges $a_i b_j$ whenever $i\ne j$.  Let $F_2$ be a rainbow triangle.  We show that
\[
  \pc(F_1,F_2)=\frac13
  <\frac38\le\pc(F_1),
  \qquad \pc(F_2)=\frac12.
\]
Every permutation of the color classes is an automorphism of $F_1$, so all members of $\Dcal(F_1)$ are isomorphic to the orientation given by the natural color order.

Suppose this orientation maps to a loopless digraph $D$ with at most four vertices and no pair of opposite arcs.  There are at most $|V(D)|^2\le16$ possible ordered pairs $(f(a_i),f(b_i))$, so two layers $i<j$ have the same pair $(u,v)$.  The arcs $a_i b_j$ and $b_i a_j$ then force both $uv$ and $vu$ in $D$, a contradiction.  Taking $D=T_4$ gives $\pc(F_1)\ge\dens(T_4)=3/8$.  Taking $D=\vec C_3$, which also avoids $T_3$, gives $\pc(F_1,F_2)\ge1/3$ by Theorem~\ref{thm:digraph}.

For the upper bound, let $D$ receive no homomorphism from $\Dcal(F_1)\cup\Dcal(F_2)$.  It has no loop.  Since $F_1$ is bipartite, every orientation of $F_1$ maps to a bidirected pair, so $D$ has no such pair either.  Also, $\Dcal(F_2)=\{T_3\}$ up to isomorphism.  The underlying graph of $D$ is therefore $K_4$-free: a clique on four vertices would give a tournament, and every tournament on four vertices contains $T_3$.  Tur\'an's theorem yields $e(D)\le |V(D)|^2/3$, because each underlying edge contributes exactly one arc.  Hence $\pc(F_1,F_2)\le1/3$.  Finally, $\pc(F_2)=1/2$ is the usual rainbow triangle density.
\end{proof}

\section{Proof of the minimum-degree results}\label{sec:min-degree-proofs}

The minimum-degree reduction follows from the stronger sparsification theorem and the same Ramsey step.  No further compression of the directed template is needed.

\begin{proof}[Proof of Theorem~\ref{thm:blowdeg}]
Let $s=s(\varepsilon)$ and $n=n(\varepsilon)$ be supplied by Theorem~\ref{thm:technical}.  Put $m_0=\max\{m,2\}$, set
\(
  m'=R(m_0,2^{s^2}),
\)
and take $k=k(\varepsilon,m')$ from that theorem.

Let $G$ be a $k$-colored graph of degree $\delta$.  If a color class has fewer than $n$ vertices, replace every vertex in that class by the same number of twins so that the enlarged class has size at least $n$.  This preserves every relative degree into that class, and the enlarged graph projects by a colored homomorphism to $G$.  We may therefore assume that all classes have size at least $n$.

Theorem~\ref{thm:technical} gives $m'$ classes and $s$-subsets $S_1,\dots,S_{m'}$ such that each pair is an $\varepsilon$-discretization of the corresponding original pair.  Hence every vertex in $S_i$ has at least $(\delta-\varepsilon)s$ neighbors in $S_j$.  Label the vertices of $S_i$ by $v_i^1,\dots,v_i^s$ and, for $i<j$, put
\[
  W_{ij}=\{(a,b)\in[s]^2:v_i^a v_j^b\in E(G)\}.
\]
Ramsey's theorem gives $m_0$ indices on which all sets $W_{ij}$ equal a common set $W$.  Let $D$ be the possibly looped digraph on $[s]$ with arc set $W$.  For any selected pair $i<j$, the row degrees of $W_{ij}$ are the out-degrees in $D$, and its column degrees are the in-degrees in $D$.  Since $m_0\ge2$, such a pair exists, and the minimum-degree condition between $S_i$ and $S_j$ shows that the minimum out-degree and minimum in-degree of $D$ are both at least $(\delta-\varepsilon)s$.  The selected classes realize a colored homomorphism $D[m_0]\to G$; restricting it to any $m$ selected layers gives $D[m]\to G$.  Composing with the projection from the enlarged host to the original one, when necessary, proves the theorem.
\end{proof}

\begin{proof}[Proof of Theorem~\ref{thm:degdigraph}]
If some $F\in\Fcal$ is edgeless, then every host with at least as many colors as $F$ receives a colored homomorphism from $F$, while the edgeless member of $\Dcal(F)$ maps to every non-empty digraph.  By the zero conventions in the definition, both sides are therefore zero.  Assume henceforth that every member of $\Fcal$ has an edge, and put
\(
  d=\pi^\delta(\Dcal(\Fcal)).
\)
For $\eta>0$, choose a digraph $H$ with minimum in-degree and out-degree at least $(d-\eta)|V(H)|$ that admits no homomorphism from any member of $\Dcal(\Fcal)$.  For every positive integer $k$, the half-blowup $H[k]$ has degree at least $d-\eta$; members of $\Fcal$ with more than $k$ colors cannot map to it, and Lemma~\ref{lem:half-blowup-test} excludes the remaining members.  Thus it admits no colored homomorphism from any $F\in\Fcal$.  Letting $\eta\downarrow0$ gives $\pcd(\Fcal)\ge d$.

For the reverse inequality, suppose that $\pcd(\Fcal)>d$ and choose $\eta>0$ with $d+2\eta<\pcd(\Fcal)$.  Let $m$ be at least the maximum number of colors in a member of $\Fcal$, and take $k=k(\eta,m)$ from Theorem~\ref{thm:blowdeg}.  By the definition of $\pcd$, there is a $k$-colored graph $G$ of degree at least $d+2\eta$ that avoids every member of $\Fcal$.  Theorem~\ref{thm:blowdeg} gives a digraph $D$ with minimum in-degree and out-degree at least $(d+\eta)|V(D)|$ and a colored homomorphism $D[m]\to G$.

If some $F^{\preceq}\in\Dcal(\Fcal)$ mapped to $D$, Lemma~\ref{lem:half-blowup-test} would give
\(
  F\to D[m]\to G,
\)
a contradiction.  Thus $D$ is $\Dcal(\Fcal)$-free with minimum semidegree greater than $d|V(D)|$, contradicting the definition of $d$.  Therefore $\pcd(\Fcal)\le d$.
\end{proof}

To prove Theorem~\ref{thm:nojump}, we use a directed-girth bound. The Caccetta--H\"aggkvist conjecture states that, for positive integers $n$ and $r$, every digraph on $n$ vertices with minimum out-degree at least $r$ contains a directed cycle of length at most $\left\lceil\frac{n}{r}\right\rceil$. We will use an upper bound on the cycle length proved by Shen \cite{She02}:

\begin{theorem}[\cite{She02}]\label{thm:shencycles}
For every integer $k\ge74$, every oriented graph on $n$ vertices with minimum out-degree at least $n/(k-73)$ contains a directed cycle of length at most $k$.
\end{theorem}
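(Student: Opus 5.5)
We will not reprove this inside the paper: the statement is quoted from Shen~\cite{She02} and is used only as a black box in the proof of Theorem~\ref{thm:nojump}. If a self-contained argument were wanted, the plan below is how we would reduce it to an additive Caccetta--H\"aggkvist bound and then attack that bound.

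First, we would show that it suffices to prove the additive form: every digraph on $n$ vertices with minimum out-degree at least $r\ge1$ and no loops or digons contains a directed cycle of length at most $n/r+73$. Granting this, take an oriented graph with minimum out-degree $r\ge n/(k-73)$. Then $n/r\le k-73$, so there is a directed cycle of length at most $k$. The condition $k\ge74$ is needed only so that $k-73$ is positive.

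To prove the additive form, we would follow the Chv\'atal--Szemer\'edi strategy. Take a minimal counterexample $G$, and delete vertices and arcs until every out-degree is exactly $r$; since this keeps the out-degree at least $r$ and cannot create cycles, a counterexample remains a counterexample. Fix a vertex $v$ and consider the out-distance layers $N_0=\{v\},N_1,N_2,\dots$ from $v$. Because there is no short cycle, no arc goes from a layer $N_i$ back to $v$ within the forbidden length. Standard estimates then show that the union of the layers $N_0\cup\cdots\cup N_i$ grows by roughly $r$ vertices per step. This by itself already gives a cycle of length at most about $2n/r$. Improving the growth to about $r$ per step with only a bounded total loss is what yields $n/r+O(1)$.

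The key growth lemma would be proved by induction on $n$. If some layer grows too slowly, we contract, or pass to the subdigraph induced by the vertices far from $v$, and apply minimality there. Shen's counting bound on the sizes of second out-neighbourhoods relative to first out-neighbourhoods is used to show that slow growth can occur only a bounded number of times. The accumulated additive error from these exceptional steps is what produces the constant $73$.

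We expect the main obstacle to be this bounded-loss growth estimate: showing that the layer sizes cannot stay near $r$ for many consecutive steps without producing a short cycle. The first thing we would try is an averaging argument over the choice of the root $v$, combined with the minimality of the counterexample applied to the digraph induced by the vertices of large distance from $v$.
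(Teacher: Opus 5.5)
Like the paper, you treat this as a result quoted from Shen and used only as a black box. Your translation of Shen's published bound (a directed cycle of length at most $n/r+73$ whenever the minimum out-degree is $r\ge1$) into the stated form, via $r\ge n/(k-73)\Rightarrow n/r+73\le k$, is exactly the right bridge. The self-contained sketch is not needed and would not stand as a proof: its bounded-loss growth lemma is the whole content of Shen's paper, and growth of about $r$ per layer would give about $n/r$, not $2n/r$. Nothing you actually rely on depends on it.
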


\begin{proof}[Proof of Theorem~\ref{thm:nojump}]
For $k\ge2$, let $F_k$ be the $3$-colored cycle $C_{3k!}$ with periodic color sequence $1-2-3$.  Along the cyclic order of the underlying cycle, the six linear orders of the colors produce the three cyclic shifts of the periodic sign pattern $++-$ and the three cyclic shifts of $--+$.  Rotations account for the cyclic shifts, and a reflection identifies the two types.  Thus all six resulting digraphs are isomorphic; denote their common isomorphism type by $D_k$.  Its vertices are $v_1,\dots,v_{3k!}$, with indices read cyclically, and the edge $v_i v_{i+1}$ is oriented toward $v_i$ when $3\mid i$ and toward $v_{i+1}$ otherwise.  Consequently, Theorem~\ref{thm:degdigraph} gives
\(\pcd(F_k)=\pi^\delta(\Dcal(F_k))=\pi^\delta(D_k),
\)
so it suffices to prove that $0<\pi^\delta(D_k)\to0$.

Let $\vec C_\ell$ have cyclically indexed vertices $w_0,\dots,w_{\ell-1}$.  Once the image of $v_1$ is fixed, the arcs of $D_k$ force the images of all subsequent vertices.  More precisely, if $f(v_i)=w_j$, then every block of three consecutive edges gives
\(
  f(v_{i+3})=w_{j+1},
\)
where the second subscript is read modulo $\ell$.
After the $k!$ blocks have been traversed, the map closes if and only if $k!\equiv0\pmod\ell$.  Conversely, when $\ell\mid k!$, the same recurrence defines a homomorphism.  Thus
\(
  D_k\to \vec C_\ell\) if and only if
  \(\ell\mid k!.
\)
Choose a prime $p>k$.  Then $D_k\nrightarrow\vec C_p$, while $\vec C_p$ has minimum in-degree and out-degree one.  Hence
\(
  \pi^\delta(D_k)\ge\frac{1}{p}>0.
\)

For the upper bound, let $k\ge74$ and suppose that an $n$-vertex digraph $H$ is $D_k$-free with minimum in-degree and out-degree at least $\frac{n}{k-73}$.  Such a host has no loop, since a loop receives a homomorphism from $D_k$, and it has no pair of opposite arcs, since $2\mid k!$ and $D_k\to\vec C_2$.  Thus $H$ is an oriented graph, and Theorem~\ref{thm:shencycles} gives a directed cycle $\vec C_\ell$ with $\ell\le k$.  Since $\ell\mid k!$, we have $D_k\to\vec C_\ell\to H$, a contradiction.  Therefore
\(
  \pi^\delta(D_k)\le\frac{1}{k-73}.
\)
Combining the two bounds gives
\[
  0<\pcd(F_k)=\pi^\delta(D_k)\le\frac{1}{k-73}.
\]
Given $\varepsilon>0$, choosing $k$ so large that $\frac{1}{k-73}<\varepsilon$ proves the theorem.
\end{proof}

\section{Concluding remarks}\label{sec:remarks}

The directed reduction determines the finite-family density values, while the low-resolution theorem supplies a sampling tool for arbitrary prescribed partitions.  Three questions remain particularly natural.

\medskip

\noindent\emph{A single forbidden colored graph.}
Theorem~\ref{thm:value-set} realizes every finite loopless digraph Lagrangian by a finite colored family.  Does every such value already equal $\pc(F)$ for a single colored graph $F$?  Equivalently, does restricting to singleton families change the value set?  Non-principality does not settle this question: it compares a family with its own members, rather than with all possible single obstructions.  A characterization would complement both the realization theorem and the chromatic-number bounds in Corollary~\ref{cor:chromatic}.

\medskip

\noindent\emph{The quantitative cost of sparsification.}
Theorem~\ref{thm:technical} gives sample size $O(\varepsilon^{-3}\log(\frac2\varepsilon))$, independent of the number of retained classes.  Determine the optimal sample size, both for density preservation alone and for the stronger degree-profile and pointwise-degree conclusion.  It is also important to understand the necessary growth of $k(\varepsilon,m)$: the proof uses repeated hypergraph Ramsey arguments, whereas the final samples are only polynomial in $\frac1\varepsilon$.  Sharp lower bounds separating the cost of selecting the classes from that of sampling within them would clarify the scope of the method.

\medskip

\noindent\emph{Minimum-degree values and directed girth.}
Theorem~\ref{thm:nojump} rules out a positive gap above zero, but it does not give a corresponding description of the minimum-degree value set.  In particular, must $\pcd(\Fcal)$ be rational for every non-empty finite colored family?  A concrete test case is the periodically colored cycle $F_k=C_{3k!}$ used in the proof: its associated orientation maps to $\vec C_\ell$ exactly when $\ell\mid k!$.  Determining $\pcd(F_k)$ would identify how much of this extremal problem is governed by directed-cycle constructions, and how much requires more general digraphs.  Unlike the edge-density case, the Lagrangian support reduction does not preserve minimum semidegree, so a different structural argument is needed.

\bibliographystyle{abbrv}
\bibliography{bibliog}

\end{document}